\theoremstyle{plain}
\newtheorem{theo}{Theorem}[section]
\newtheorem{lemma}[theo]{Lemma}
\newtheorem{cor}[theo]{Corollary}
\newtheorem{prop}[theo]{Proposition}
\newtheorem{lemmaa}{Lemma}
\newtheorem{cora}{Corollary}
\theoremstyle{definition}
\newtheorem{defi}[theo]{Definition}
\theoremstyle{remark}
\begin{document}
\title{Quantum groups of $GL(2)$ representation type}
\author{Colin MROZINSKI}
\date{}
\maketitle
\begin{abstract}
We classify the cosemisimple Hopf algebras whose corepresentation semi-ring is isomorphic to that of $GL(2)$. This leads us to define a new family of Hopf algebras which generalize the quantum similitude group of a non-degenerate bilinear form. A detailed study of these Hopf algebras gives us an isomorphic classification and the description of their corepresentation categories.
\end{abstract}
\section{Introduction and main results}

There are many approaches to the classification problem for quantum groups, depending on what group theory aspect one wants to emulate. Our approach is based on Tannaka-Krein reconstruction theory, which shows deep links between a Hopf algebra and its corepresentation category. Keeping that in mind, we investigate the problem of classifying Hopf algebras according to their corepresentation semi-ring, a problem already considered by several authors \cite{Wo,WZ,KP,Ban1,Ban,Ohn2,Ohn,PHH,Bi1}. In the present paper, we consider the $GL(2)$-case and we classify (in characteristic zero) the cosemisimple Hopf algebras having a corepresentation semi-ring isomorphic to the one of $GL(2)$.

Let $k$ be an algebraically closed field, let $n\in \mathbb N, n\geq 2$ and let $A,B\in GL_n(k)$.
We consider the following algebra $\mathcal{G}(A,B)$: it is the universal algebra with generators $(x_{ij})_{1\leq i,j \leq n}, d, d^{-1}$ satisfying the relations 
$$x^tAx=Ad \ \ \ xBx^t=Bd \ \ \ dd^{-1}=1=d^{-1}d,$$
where $x$ is the matrix $(x_{ij})_{1\leq i,j \leq n}$.
This algebra has a natural Hopf algebra structure and might be seen as a generalization
of the Hopf algebra corresponding to the quantum similitude group of a non-degenerate bilinear form.
When $n=2$ and for particular matrices $A,B$, it was used by Ohn (\cite{Ohn})  in order to classify quantum $GL_2(\Bbb C)$'s. 
Let $q\in k^*$. For a well chosen matrix $A_q\in GL_2(k)$, we have $\mathcal{G}(A_q,A_q)=\mathcal{O}(GL_q(2))$, the function algebra on the quantum group $GL_q(2)$. Our first result describes the monoidal category of comodules over $\mathcal{G}(A,B)$ for some matrices $A,B\in GL_n(k)$.

\begin{theo}\label{T 1}
Let $A,B\in GL_n(k)$ ($n\geq 2$) such that $B^tA^tBA=\lambda I_n$ for some $\lambda \in k^*$
and let  $q\in k^*$ such that $ q^2-\sqrt{\lambda^{-1}}tr(AB^t) q+1=0$. Then there is a $k$-linear equivalence of monoidal categories $$\emph{\textrm{Comod}}(\mathcal{G}(A,B))\simeq^{\otimes}\emph{\textrm{Comod}}(\mathcal{O}(GL_q(2)))$$ between the comodule categories of $\mathcal{G}(A,B)$ and $\mathcal{O}(GL_q(2))$ respectively.
\end{theo}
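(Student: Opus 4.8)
The plan is to realize both comodule categories as the same abstract monoidal category by encoding the defining relations of $\mathcal{G}(A,B)$ as structure morphisms on the fundamental comodule. Write $V=k^n$ for the fundamental comodule, with coaction $e_j\mapsto\sum_i e_i\otimes x_{ij}$, and let $k_d$ denote the invertible $1$-dimensional comodule attached to the grouplike $d$. First I would check that the defining relations say exactly that the bilinear form $\beta_A\colon V\otimes V\to k_d$, $e_i\otimes e_j\mapsto A_{ij}$, and the copairing $\gamma_B\colon k_d\to V\otimes V$, $1\mapsto\sum_{i,j}B_{ij}\,e_i\otimes e_j$, are morphisms of comodules (a direct index computation using $(x^tAx)_{ij}$ and $(xBx^t)_{ij}$). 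Since $A,B\in GL_n(k)$ both are non-degenerate, so $V$ becomes a rigid object whose left and right duals are built from $V$ and $k_d^{\pm1}$; in particular $\textrm{Comod}(\mathcal{G}(A,B))$ is generated as a monoidal category by $V$ and $k_d^{\pm 1}$ together with $\beta_A,\gamma_B$.

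Next I would extract the numerical invariants that must match those of $GL_q(2)$. Composing one way gives $\beta_A\circ\gamma_B=\mathrm{tr}(AB^t)\,\textrm{id}_{k_d}$, so the ``loop value'' is $\mathrm{tr}(AB^t)$; composing the other way gives the Temperley--Lieb morphism $e=\gamma_B\circ\beta_A$ on $V\otimes V$, which satisfies $e^2=\mathrm{tr}(AB^t)\,e$. The hypothesis $B^tA^tBA=\lambda I_n$ is precisely the compatibility making the left and right duality data coherent, i.e. expressing that the canonical double-dual isomorphism of $V$ is the scalar $\lambda$ up to a power of $d$, so that the (braiding-free) rigid structure on the category generated by $V$ and $k_d$ is rigidly of Temperley--Lieb/Hecke type. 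The parameter $q$ is then forced: rewriting $q^2-\sqrt{\lambda^{-1}}\,\mathrm{tr}(AB^t)\,q+1=0$ as $q+q^{-1}=\sqrt{\lambda^{-1}}\,\mathrm{tr}(AB^t)$ identifies the rescaled loop value with the Hecke eigenvalue sum of the fundamental corepresentation of $GL_q(2)$. I would fix such a root $q$ and record that $\mathcal{O}(GL_q(2))=\mathcal{G}(A_q,A_q)$ for the distinguished matrix $A_q$, with $k_d$ corresponding to the quantum determinant $\det_q$.

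To produce the equivalence I would build a Hopf--Galois linking object by organizing the various $\mathcal{G}$'s into a cogroupoid. Concretely, define $\mathcal{G}(A,B\,;\,C,D)$ by generators $(x_{ij}),d,d^{-1}$ and relations $x^tAx=Cd$, $xDx^t=Bd$, so that $\mathcal{G}(A,B\,;\,A,B)=\mathcal{G}(A,B)$, and then check by the matrix-product substitution $x\mapsto yz$, $d\mapsto d_yd_z$ that the family carries comultiplications $\mathcal{G}(A,B\,;\,C,D)\to\mathcal{G}(A,B\,;\,E,F)\otimes\mathcal{G}(E,F\,;\,C,D)$, counits, and antipodes (the relations propagate exactly because the sandwiched forms match). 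The algebra $\mathcal{G}(A,B\,;\,A_q,A_q)$, with relations $x^tAx=A_qd$ and $xA_qx^t=Bd$, is then a $\mathcal{G}(A,B)$--$\mathcal{O}(GL_q(2))$-bicomodule algebra; in a connected cogroupoid every hom-algebra is automatically bi-Galois, so by Bichon's cogroupoid theorem (resting on Schauenburg's characterization of monoidal Morita--Takeuchi equivalence via bi-Galois objects) a non-zero linking object yields the desired $k$-linear monoidal equivalence, sending $V$ to the fundamental comodule of $GL_q(2)$ and $k_d$ to $\det_q$.

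The hard part will be the non-vanishing of the connecting algebra $\mathcal{G}(A,B\,;\,A_q,A_q)$: all the formal cogroupoid identities are routine, but the equivalence follows only if the linking object is non-zero, and a priori the relations could collapse it to $0$. I would prove non-triviality by exhibiting an explicit non-zero comodule-algebra structure, equivalently a non-zero algebra map to a concrete algebra (for instance a suitable matrix realization solving $x^tAx=A_qd$, $xA_qx^t=Bd$). This is exactly where the parameter matching is used in an essential, non-formal way: the existence of a root $q$ of the quadratic together with $B^tA^tBA=\lambda I_n$ is what guarantees that such a solution exists and that $1\neq0$ in $\mathcal{G}(A,B\,;\,A_q,A_q)$. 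Granting this non-vanishing, the linking object becomes a genuine bi-Galois object and the monoidal equivalence follows.
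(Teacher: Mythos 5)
Your overall architecture is the paper's: encode $A,B$ as colinear (co)pairings on the fundamental comodule, organize the algebras $\mathcal{G}(A,B|C,D)$ into a cogroupoid, and invoke Schauenburg's bi-Galois theorem once a non-zero linking algebra is found. You also correctly isolate the crux, namely that the linking algebra might collapse to zero. But there are two genuine gaps. First, the linking object you propose is the wrong one: $\mathcal{G}(A,B\,;A_q,A_q)$ is in fact the \emph{zero} algebra whenever $\lambda\neq q^2$. Indeed, if it were non-zero it would be a $\mathcal{G}(A,B)$--$\mathcal{O}(GL_q(2))$ bi-Galois object, and the associated fibre functor transports the structure maps $\beta_A,\gamma_B$ to the ones given exactly by the matrices $A_q,A_q$ (no rescaling freedom: the cotensor construction fixes them on the nose, as in the paper's Theorem 5.3), forcing $\mathrm{tr}(AB^t)=\mathrm{tr}(A_qA_q^t)=1+q^2$ and $\lambda=q^2$. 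The hypothesis of the theorem only gives $\mathrm{tr}(AB^t)=\sqrt{\lambda}(q+q^{-1})$, which equals $1+q^2$ only when $\lambda=q^2$. The missing step is the normalization the paper performs: since $\mathcal{G}(A,B)=\mathcal{G}(\alpha A,\beta B)$, one replaces $(A,B)$ by $(\sqrt{\lambda^{-1}}A,\,qB)$, whose invariants become exactly $(q^2,\,1+q^2)$, matching $(A_q,A_q)$; only for this normalized pair does the linking algebra have a chance of being non-zero. Without this one-line rescaling your candidate bi-Galois object does not exist and the argument stops.

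Second, even after normalization, the non-vanishing is the actual mathematical content of the theorem and you do not prove it. Saying you would ``exhibit a suitable matrix realization solving $x^tAx=A_qd$, $xA_qx^t=Bd$'' and that the hypotheses ``guarantee that such a solution exists'' is an assertion of exactly what must be shown; no construction is given, and it is not clear that a concrete (let alone finite-dimensional) realization of this infinite-dimensional Galois object is any easier to produce than a direct proof. The paper's Lemma 3.6 handles this by a substantial combinatorial argument: after conjugating $D$ so that $D_{mm}=0$, it writes an explicit presentation of $\mathcal{M}(A_q,A_q|C,D)$ with ordered generators, verifies by hand that every overlap ambiguity is resolvable, and applies Bergman's diamond lemma to get a basis of reduced monomials (hence non-vanishing), then checks that $d$ is not a zero divisor so that the localization adjoining $d^{-1}$ remains non-zero. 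Some replacement for this computation --- a confluent rewriting system, a PBW basis, or a genuinely constructed representation --- is indispensable, and its absence leaves the proof incomplete at its critical point.
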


This result is inspired by the paper of Bichon \cite{Bi1}, which gives similar results for the quantum group of a non-degenerate bilinear form. As in \cite{Bi1}, the result is proved
by constructing some appropriate Hopf bi-Galois objects and by using a theorem of Schauenburg \cite{Sch1}.
The Hopf bi-Galois objects we construct are part of a connected cogroupoid \cite{Bi2}.  
The technical difficulty in this approach is to study the connectedness of this cogroupoid.


We use Theorem \ref{T 1} to classify, in characteristic zero, all the cosemisimple Hopf algebras whose corepresentation semi-ring is isomorphic to that of $GL_2(k)$. Recall that $q \in k^*$ is said to be generic if $q$ is not a root of unity or if $q\in \{\pm 1 \}$.

\begin{theo}\label{T 2}
Assume that $char(k)=0$. The Hopf algebras whose corepresentation semi-ring is isomorphic to that of $GL_2(k)$ are exactly the $$\mathcal{G}(A,B)$$ with $A,B\in GL_n(k)$ ($n\geq 2$) satisfying $B^tA^tBA=\lambda I_n$ for some $\lambda \in k^*$ and such that any solution of the equation $ X^2-\sqrt{\lambda^{-1}}tr(AB^t) X+1=0$ is generic.
\end{theo}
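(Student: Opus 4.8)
The plan is to prove the two inclusions separately: the easy \emph{sufficiency} direction directly from Theorem~\ref{T 1}, and the harder \emph{necessity} direction by Tannaka--Krein reconstruction. For sufficiency, suppose $A,B\in GL_n(k)$ satisfy $B^tA^tBA=\lambda I_n$ and that every root $q$ of $X^2-\sqrt{\lambda^{-1}}\mathrm{tr}(AB^t)X+1=0$ is generic. Theorem~\ref{T 1} then furnishes a $k$-linear monoidal equivalence $\mathrm{Comod}(\mathcal G(A,B))\simeq^{\otimes}\mathrm{Comod}(\mathcal O(GL_q(2)))$. Since for generic $q$ the algebra $\mathcal O(GL_q(2))$ is cosemisimple with corepresentation semi-ring isomorphic to that of $GL_2(k)$ (a classical fact), and since a $k$-linear monoidal equivalence both preserves cosemisimplicity and induces an isomorphism of corepresentation semi-rings, I would conclude that $\mathcal G(A,B)$ is itself cosemisimple with the $GL_2(k)$-semi-ring. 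This handles one inclusion.

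For necessity, I would start from a cosemisimple Hopf algebra $H$ with corepresentation semi-ring isomorphic to that of $GL_2(k)$ and transport the distinguished generators. Let $V$ be the simple comodule corresponding to the fundamental corepresentation $u$ of $GL_2$ and let $d$ be the invertible one-dimensional comodule corresponding to the determinant; set $n=\dim V$, noting that a priori $n\geq 2$ is \emph{not} forced to equal $2$, since the semi-ring records only fusion multiplicities and not dimensions. The fusion rule $u\otimes u = d+w$ in $R(GL_2)$ forces $\dim\mathrm{Hom}(d,V\otimes V)=1=\dim\mathrm{Hom}(V\otimes V,d)$; choosing generators of these two lines and fixing a basis produces matrices $A,B\in M_n(k)$, non-degenerate because $V$ is simple and the associated pairings are non-zero, hence lying in $GL_n(k)$. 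Interpreting the comodule-morphism conditions for $A$ and $B$ in terms of the coefficient matrix $x=(x_{ij})$ of $V$ yields precisely the relations $x^tAx=Ad$ and $xBx^t=Bd$. Consequently the elements $x_{ij},d,d^{-1}$ satisfy the defining relations of $\mathcal G(A,B)$ and determine a Hopf algebra map $\pi:\mathcal G(A,B)\to H$, which is surjective because these elements generate $H$ (as $V$ and $d^{\pm1}$ monoidally generate $\mathrm{Comod}(H)$).

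Next I would establish the structural identity $B^tA^tBA=\lambda I_n$ together with the genericity of the roots $q$. The identity should be read as the assertion that the morphisms attached to $A$ and $B$ exhibit $V$ as its own bidual up to the invertible twist $d$; I would derive it by composing the canonical morphisms $d\to V\otimes V$ and $V\otimes V\to d$ in the two available ways and invoking rigidity together with $\mathrm{End}(V)=k$ and the one-dimensionality of the relevant $\mathrm{Hom}$-spaces. The scalar $\sqrt{\lambda^{-1}}\mathrm{tr}(AB^t)$ then equals the categorical invariant $q+q^{-1}$ governing the $SL_2$-type Clebsch--Gordan rule obeyed by the iterated tensor powers of $V$. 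Since $R(H)$ is the \emph{full} $GL_2(k)$-semi-ring, with all symmetric powers simple and the untwisted fusion $V\otimes V_m\cong V_{m-1}\oplus V_{m+1}$, this invariant forces every root of the quadratic to be generic, a non-trivial root of unity being excluded as it would truncate or alter the fusion rules.

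Finally I would upgrade $\pi$ to an isomorphism. Having shown $B^tA^tBA=\lambda I_n$ and the genericity of $q$, the sufficiency direction now applies to $\mathcal G(A,B)$ itself, so it is cosemisimple with corepresentation semi-ring $R(GL_2(k))$ and its comodule category is monoidally generated by $V$ and $d^{\pm1}$. The surjection $\pi$ induces a full, faithful, monoidal embedding $\mathrm{Comod}(H)\hookrightarrow\mathrm{Comod}(\mathcal G(A,B))$ whose essential image is closed under tensor products, subobjects and duals and contains the generators $V,d^{\pm1}$; hence it is all of $\mathrm{Comod}(\mathcal G(A,B))$. For cosemisimple Hopf algebras this essential surjectivity means $\pi$ is bijective on every coefficient coalgebra, so $\pi$ is an isomorphism and $H\cong\mathcal G(A,B)$. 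The main obstacle is exactly this last identification: controlling $\ker\pi$, equivalently ruling out that $\mathcal G(A,B)$ carries more corepresentations than the $GL_2$-fusion rules permit. This is precisely what Theorem~\ref{T 1} provides, by transporting the question to the well-understood $\mathcal O(GL_q(2))$, so that pinning down $q$ as generic is the crux of the whole argument.
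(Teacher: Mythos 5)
Your overall architecture coincides with the paper's: use Theorem \ref{T 1} for sufficiency; for necessity, use the universal property of $\mathcal{G}(A,B)$ on the two morphisms $U_{(1,0)}^H\otimes U_{(1,0)}^H\to U_{(0,1)}^H$ and $U_{(0,1)}^H\to U_{(1,0)}^H\otimes U_{(1,0)}^H$ to produce a surjection $\pi:\mathcal{G}(A,B)\to H$, get $B^tA^tBA=\lambda I_n$ from Schur's lemma, invoke Theorem \ref{T 1} to identify $\mathrm{Comod}(\mathcal{G}(A,B))$ with $\mathrm{Comod}(\mathcal{O}(GL_q(2)))$, and finish by Tannaka--Krein once $\pi_*$ is known to match up the simples. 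However, at the step you yourself identify as the crux --- ruling out that $q$ is a nontrivial root of unity --- you offer only the assertion that such a $q$ ``would truncate or alter the fusion rules.'' This conflates the fusion rules of $H$ (full $GL_2$, by hypothesis) with those of $\mathcal{G}(A,B)$ (governed by $q$ via Theorem \ref{T 1}); the whole difficulty is to show these two are incompatible when $q$ is a nontrivial root of unity, given only a surjection $\pi$ and the induced monoidal functor $\pi_*:\mathrm{Comod}(\mathcal{G}(A,B))\to\mathrm{Comod}(H)$, which a priori need not send simples to simples nor reflect decompositions. The paper closes this gap with a concrete computation: at a root of unity of order $N_0$, one first checks inductively that $\pi_*(U_{(n,e)}^{AB})=U_{(n,e)}^H$ for $n\le N_0-1$, then applies $\pi_*$ to $U_{(N_0-1,0)}^{AB}\otimes U_{(1,0)}^{AB}$, which is \emph{not} semisimple and carries a simple filtration with subquotients $U_{(N_0-2,1)}$, $V_1$, $U_{(N_0-2,1)}$. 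Exactness of $\pi_*$ forces the image to have two composition factors isomorphic to $U^H_{(N_0-2,1)}$, while the $GL_2$ fusion rule in $H$ gives $U^H_{(N_0,0)}\oplus U^H_{(N_0-2,1)}$, with only one such factor --- a contradiction. Without this (or an equivalent argument, e.g.\ via Jones--Wenzl projectors or dimensions of $\mathrm{End}(V^{\otimes k})$), the decisive implication is unproved.

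A secondary slip: a surjection $\pi:\mathcal{G}(A,B)\to H$ induces a functor $\mathrm{Comod}(\mathcal{G}(A,B))\to\mathrm{Comod}(H)$ (corestriction of scalars), not a ``full, faithful, monoidal embedding $\mathrm{Comod}(H)\hookrightarrow\mathrm{Comod}(\mathcal{G}(A,B))$'' as you write; embeddings of comodule categories arise from \emph{injections} of Hopf algebras. The correct conclusion in the cosemisimple case is obtained as in the paper: the fusion rules force $\pi_*(U_{(n,e)}^{AB})\simeq U_{(n,e)}^H$ for all $(n,e)$, so $\pi$ induces an isomorphism of corepresentation semi-rings, and a Hopf algebra map between cosemisimple Hopf algebras with this property is an isomorphism (it restricts to isomorphisms between the matching simple coefficient coalgebras). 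Your intended endgame is salvageable once phrased in this direction, but the root-of-unity exclusion remains a genuine missing argument.
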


A particular case of the theorem was already known if one requires the fundamental comodule of $H$ to be of dimension 2 (\cite{Ohn}). 
A similar classification (without dimension constraint) was obtained by Bichon (\cite{Bi1}) in the  $SL(2)$ case
(the compact $SU(2)$ case had been done by Banica \cite{Ban1}).  The $SL(3)$ case with dimension constraints has been studied by Ohn (\cite{Ohn2}). Other related results have been given in the $SU(N)$ and $SL(N)$ case by Banica (\cite{Ban}) and Phung Ho Hai (\cite{PHH}), in terms of Hecke symmetries. It is worth to note that in principle Theorem \ref{T 2} could be deduced by the combination
of Phung Ho Hai's work \cite{PHH} and Gurevich's classification of Hecke symmetries of rank two \cite{Gu}.
We believe that the present approach, using directly pairs of invertible matrices, is more explicit and simpler.

We also give a version of Theorem \ref{T 2} in the compact case.

Finally the following theorem will complete the classification of $GL(2)$-deformations.
\begin{theo}\label{T 3}
Assume that $char(k)=0$. Let $A,B\in GL_n(k)$ and let $C,D\in GL_m(k)$ such that $B^tA^tBA=\lambda_1 I_n$ and $D^tC^tDC=\lambda_2 I_m$ for $\lambda_1, \lambda_2 \in k^*$. The Hopf algebras $\mathcal{G}(A,B)$ and $\mathcal{G}(C,D)$ are isomorphic if and only if $n=m$ and there exists $P\in GL_n(k)$ such that either $$(C,D)=(P^tAP,  P^{-1}BP^{-1t}) \text{ or } (C,D)=( P^tB^{-1}P, P^{-1}A^{-1}P^{-1t})$$
\end{theo}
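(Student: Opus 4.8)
The plan is to prove both implications by working with the canonical comodule structures carried by the generators: I regard $x=(x_{ij})$ as the multiplicative matrix of an $n$-dimensional comodule $V$ and $d$ as the grouplike spanning the determinant comodule $D$, so that the relations $x^tAx=Ad$ and $xBx^t=Bd$ say exactly that $A$ is a comodule morphism $V\otimes V\to D$ and $B$ a comodule morphism $D\to V\otimes V$. For the easy direction, given $P\in GL_n(k)$ I would exhibit the isomorphism explicitly: set $f(x)=PyP^{-1}$, $f(d)=e$ (with $y,e$ the generators of $\mathcal{G}(C,D)$) and check $f(x)^tAf(x)=Ae$ and $f(x)Bf(x)^t=Be$, which reduce to $C=P^tAP$ and $D=P^{-1}BP^{-1t}$; the coalgebra structure is preserved because conjugation by a scalar matrix sends a multiplicative matrix to a multiplicative matrix. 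For the second family I would first record the relations satisfied by the contragredient matrix $\overline y=(y^t)^{-1}$: applying the antipode to the defining relations gives $\overline y^{\,t}D\,\overline y=De^{-1}$ and $\overline y\,C\,\overline y^{\,t}=Ce^{-1}$, and invertibility of $y$ yields the secondary identities $y^tD^{-1}y=D^{-1}e$ and $yC^{-1}y^t=C^{-1}e$; feeding these into the same computation shows that $f(x)=PyP^{-1}$, $f(d)=e$ likewise realizes $(C,D)=(P^tB^{-1}P,P^{-1}A^{-1}P^{-1t})$.

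For the hard direction I would start from an abstract Hopf algebra isomorphism $f\colon\mathcal{G}(A,B)\to\mathcal{G}(C,D)$ and exploit its rigidity beyond that of a mere monoidal equivalence. Being an isomorphism of coalgebras, $f$ carries grouplikes to grouplikes; since the grouplike group of each algebra is infinite cyclic, generated by the determinant, this gives $f(d)=e^{\pm1}$. More importantly, $f$ maps simple subcoalgebras to simple subcoalgebras of the same dimension, hence simple comodules to simple comodules of the same underlying dimension. As $V$ is simple and tensor-generates $\mathcal{G}(A,B)$ together with $d$, its image $f(V)$ is a simple comodule of dimension $n$ that tensor-generates $\mathcal{G}(C,D)$ together with $e^{\pm1}$.

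The heart of the argument is then to classify such comodules. Using the $GL(2)$-fusion rules transported by Theorem \ref{T 1}, the simple comodules of $\mathcal{G}(C,D)$ are the $W_k\otimes\det^j$, whose dimensions are strictly increasing quantum integers in $k$; among these, only $W_1=W$ and its dual (together with determinant twists) simultaneously tensor-generate and have the minimal nontrivial dimension, so $f(V)\cong W\otimes\det^j$ or $W^*\otimes\det^j$. This forces $n=m$ and, reading off an intertwiner, $f(x)=P(y\,e^{j})P^{-1}$ up to passing to the contragredient. Transporting the two defining relations through $f$ and using that the spaces of invariant bilinear forms $\mathrm{Hom}(W\otimes W,\det)$ and $\mathrm{Hom}(\det,W\otimes W)$ are one-dimensional, I would identify $P^tAP$ and $P^{-1}BP^{-1t}$ with the available invariant forms of $\mathcal{G}(C,D)$—namely $C$ and $D^{-1}$, respectively $D$ and $C^{-1}$—which yields exactly the two stated families.

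The main obstacle is precisely this categorical identification of $f(V)$. The difficulty is structural: since the corepresentation semi-ring of every $\mathcal{G}(A,B)$ is that of $GL(2)$ regardless of $n$, the integer $n$ is completely invisible at the level of the semi-ring, so one cannot characterize the fundamental comodule inside the fusion ring alone. The argument must therefore use the finer rigidity of a genuine Hopf-algebra isomorphism—preservation of comodule dimension and of the simple-subcoalgebra structure—and the delicate points are to pin down that the only simple tensor-generators are the determinant twists of $W$ and $W^*$, and to keep track of the twist exponent $j$ and of the dual case carefully enough to land in precisely the two families (which, as the proportionality $C\propto D^{-1}$ forced by one-dimensionality of the invariant forms shows, coincide up to scalars in the $GL(2)$-regime).
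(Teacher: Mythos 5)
Your outline of the hard direction is essentially the paper's own route: a genuine Hopf algebra isomorphism preserves grouplikes and comodule dimensions, the fusion rules single out the determinant twists of the fundamental comodule, and one then transports the defining relations using one-dimensionality of the intertwiner spaces. The paper's Lemma \ref{L 2} carries out exactly the step you defer as a ``delicate point'': it characterizes the candidates as the only simples $W$ with $W\otimes W$ a sum of two simples, and pins down the twist exponent to $j\in\{0,-1\}$ by comparing $U_{(1,j)}^{\otimes 2}\simeq U_{(2,2j)}\oplus U_{(0,2j+1)}$ with the image of $U_{(1,0)}^{\otimes 2}\simeq U_{(2,0)}\oplus U_{(0,1)}$. (One small correction there: the dimensions of the $U_{(k,j)}^{CD}$ are not quantum integers, since the monoidal equivalence with $\mathcal{O}(GL_q(2))$ does not preserve dimensions; only their strict growth, deduced from the fusion rules, is available.)

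The genuine error is in your ``secondary identities'' $y^tD^{-1}y=D^{-1}e$ and $yC^{-1}y^t=C^{-1}e$, and it contaminates both directions. These identities are false: the grouplike $e$ is not central --- it commutes with the entries of $y$ only up to conjugation, e.g.\ $(CD)y^te^{-1}=e^{-1}y^t(CD)$ --- so inverting $y^tCy=Ce$ over the noncommutative algebra does not yield $yC^{-1}y^t=C^{-1}e$. Concretely, in $\mathcal{O}(GL_{p,q}(2))=\mathcal{G}(A_p,A_q)$ one has $A_q^{-1}=-q^{-1}A_q$, so your identity $x^tA_q^{-1}x=A_q^{-1}d$ would read $x^tA_qx=A_qd$; combined with the genuine relation $x^tA_px=A_pd$ this forces $(p-q)x_{21}x_{11}=0$, which fails for $p\neq q$. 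Consequently: (i) your isomorphism for the second family, $f(x)=PyP^{-1}$ with $f(d)=e$, does not exist in general --- an isomorphism fixing the determinant necessarily lands in the first family, and the second family requires $f(d)=e^{-1}$ and $f(x)=Pye^{-1}P^{-1}$, checked precisely with the commutation relation above (this is Proposition \ref{P 2}); and (ii) your closing claim that $C\propto D^{-1}$ is ``forced by one-dimensionality of the invariant forms'' and that the two families coincide up to scalars is false: $C$ spans $\mathrm{Hom}(W\otimes W,D_{CD})$ while $D$ spans the \emph{different} space $\mathrm{Hom}(D_{CD},W\otimes W)$, and the only link between them is Schur's relation $D^tC^tDC=\lambda I_m$; the pair $(A_p,A_q)$ with $p\neq q$ satisfies this with $A_q\not\propto A_p^{-1}$. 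The two families in the statement are genuinely distinct, which is exactly what makes $\mathrm{CoOut}(\mathcal{G}(A,B))\simeq\mathbb{Z}/2\mathbb{Z}$ in Section 5.
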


We will also provide the classification of the Hopf algebra $\mathcal{G}(A,B)$ up to monoidal equivalence (Corollary \ref{Cor1}).

The paper is organized as follows: in Sec. 2 we introduce the Hopf algebras $\mathcal{G}(A,B)$  and discuss some basic properties; in Sec. 3, we build a cogroupoid linking the Hopf algebra $\mathcal{G}(A,B)$ and study its connectedness: this will prove Theorem \ref{T 1}; in Sec. 4 we prove Theorem $\ref{T 2}$ and Theorem \ref{T 3}; in Sec. 5, we classify $\mathcal{G}(A,B)$-Galois objects up to isomorphisms, its group of bi-Galois objects and its lazy cohomology group; finally, Sec. 6 is devoted to study the $GL(2)$-deformations in the compact case.

\smallskip

Throughout the paper $k$ is an algebraically closed field. 
We assume that the reader is familiar with Hopf algebras and their monoidal categories of comodules (corepresentations), and with 
Hopf-Galois objects. See \cite{mon, scsurv}.

\section{The Hopf algebra $\mathcal{G}(A,B)$}

Let $n\geq 2$ and $A,B \in GL_n(k)$.
The algebra $\mathcal{G}(A,B)$ has been defined in the introduction.
In this section, we briefly discuss its Hopf algebra structure, its universal property
and some of its basic properties. 

The following result will be generalized at the cogroupoid level in the next section, where the proof 
is given.

\begin{prop}
The algebra $\mathcal{G}(A,B)$ admits a Hopf algebra structure, with comultiplication $\Delta$ defined by $$\Delta(x_{ij})=\sum_{k=1}^n x_{ik}\otimes x_{kj},1\leq i,j\leq n, \ \Delta (d^{\pm})=d^{\pm}\otimes d^{\pm},$$ with counit $\varepsilon$ defined by $$\varepsilon(x_{ij})=\delta_{ij}, 1\leq i,j \leq n, \ \varepsilon (d^{\pm})=1$$ and with antipode $S$ defined by $$S(x)=d^{-1}A^{-1}x^tA, \ S(d^{\pm})=d^{\mp}$$
\end{prop}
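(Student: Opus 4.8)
The plan is to verify directly that the proposed structure maps are well-defined algebra homomorphisms (for $\Delta$ and $\varepsilon$) and a well-defined algebra anti-homomorphism (for $S$), and then that they satisfy the Hopf algebra axioms. Since $\mathcal{G}(A,B)$ is defined by a universal property as the universal algebra on the generators $(x_{ij}), d, d^{-1}$ subject to the stated relations, the key point is that each map is specified on generators and extended multiplicatively; to see that it descends to the quotient I must check that the defining relations are sent to valid relations. So the first step is to confirm that the candidate $\Delta$, $\varepsilon$, $S$ are compatible with the relations $x^tAx = Ad$, $xBx^t = Bd$, and $dd^{-1}=1=d^{-1}d$.

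First I would handle the comultiplication. Writing $\Delta$ in matrix form as $\Delta(x) = x \dotovert x$ (the entrywise ``matrix comultiplication'' sending $x_{ij} \mapsto \sum_k x_{ik}\otimes x_{kj}$) and $\Delta(d^{\pm}) = d^{\pm}\otimes d^{\pm}$, I need to verify that the image of $x^tAx - Ad$ vanishes in $\mathcal{G}(A,B)\otimes\mathcal{G}(A,B)$, and similarly for the other two relations. Applying $\Delta$ to $x^tAx$ and using that the relation holds in each tensor factor should reduce $\Delta(x^tAx)$ to $\Delta(Ad) = A(d\otimes d)$; this is the standard FRT-type computation where one substitutes $x^tAx = Ad$ twice. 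The relation $dd^{-1}=1$ is immediate since $\Delta$ is grouplike on $d^{\pm}$. Coassociativity and the counit axioms then hold because they hold on generators (matrix comultiplication is coassociative and $\varepsilon$ is a two-sided counit on the $x_{ij}$ and on $d^{\pm}$), and extend to all of $\mathcal{G}(A,B)$ by the algebra-homomorphism property.

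The main obstacle, and the step deserving the most care, is the antipode. I must check two things. Well-definedness: $S$ is declared to be an algebra anti-homomorphism with $S(x) = d^{-1}A^{-1}x^tA$ and $S(d^{\pm}) = d^{\mp}$, so I must verify that the anti-homomorphism defined on the free algebra kills the defining relations — e.g.\ that $S$ applied (in reversed order) to $x^tAx = Ad$ produces a valid consequence of the relations, which will require using \emph{both} relations $x^tAx = Ad$ and $xBx^t = Bd$ together with the hypothesis relating $A$ and $B$ implicit in the construction, since $S$ mixes $A$ into the formula while the second relation involves $B$. Then the antipode axioms $m(S\otimes \mathrm{id})\Delta = \eta\varepsilon = m(\mathrm{id}\otimes S)\Delta$ must be checked on generators: for $d^{\pm}$ this is trivial, and for $x$ it amounts to computing $\sum_k S(x_{ik})x_{kj}$ and $\sum_k x_{ik}S(x_{kj})$ in matrix form, i.e.\ $S(x)\cdot x$ and $x\cdot S(x)$, and showing both equal $\varepsilon$ applied entrywise, namely $I_n$. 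Substituting $S(x) = d^{-1}A^{-1}x^tA$ gives $d^{-1}A^{-1}(x^tAx) = d^{-1}A^{-1}(Ad) = I_n$ for one side, using the relation $x^tAx = Ad$ directly; the other side requires the relation $xBx^t = Bd$ and presumably a compatibility between $A$ and $B$ to reconcile the two expressions.

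Rather than grinding through these matrix identities in the present section, I would follow the remark in the text: the statement is a special (single-object) case of a more general construction at the cogroupoid level carried out in the next section, so I would defer the detailed verification to that setting, where the same computations establish the cogroupoid structure maps and specialize here. The genuinely nontrivial ingredient throughout is ensuring the antipode relations close up consistently using both defining relations simultaneously; once that is in place, coassociativity, counitality, and the antipode axioms all reduce to the matrix identities above.
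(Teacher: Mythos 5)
Your proposal is correct and follows essentially the same route as the paper: the paper gives no direct proof here but defers, exactly as you do, to the cogroupoid construction of Section~3 (Lemma~3.4), where well-definedness of $\Delta$, $\varepsilon$, $S$ is checked against the defining relations by the matrix computations you describe and the Hopf axioms are verified on generators. One small clarification: no compatibility hypothesis between $A$ and $B$ is needed for the antipode axiom --- the relation $x^tAx=Ad$ makes $d^{-1}A^{-1}x^tA$ a left inverse of $x$ while $xBx^t=Bd$ produces a right inverse, and a matrix with both a left and a right inverse over a ring has them equal, so both sides of the antipode identity close up for arbitrary $A,B\in GL_n(k)$.
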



We now give (and sketch the proof of) the universal property of the Hopf algebra $\mathcal{G}(A,B)$: 
\begin{prop}
Let $H$ be a Hopf algebra with a group-like element $d\in Gr(H)$ and let $V$ be a finite-dimensional $H$-comodule of dimension $n$. Let $a : V\otimes V \to D$ and $b : D \to V\otimes V$ be two $H$-comodule morphisms (where $D$ denotes the $H$-comodule induced by $d$) such that the underlying bilinear forms are non-degenerate. Then there exist $A,B\in GL_n(k)$ such that: 
\begin{enumerate}
\item $V$ and $D$ have a $\mathcal{G}(A,B)$-comodule structure and $a$ and $b$ are $\mathcal{G}(A,B)$-comodule morphisms,
\item there exists a unique Hopf algebra morphism $\psi : \mathcal{G}(A,B) \to H$ such that $(id_D\otimes \psi)\circ \alpha_D=\alpha_D'$ and $(id_V\otimes \psi)\circ \alpha_V=\alpha_V'$ (where $\alpha$ and $\alpha'$ denote the coactions of $\mathcal{G}(A,B)$ and $H$ respectively).
\end{enumerate}
\end{prop}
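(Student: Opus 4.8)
The plan is to reconstruct the matrices $A$ and $B$ directly from the structure constants of the comodule morphisms $a$ and $b$, and then verify that these matrices satisfy the defining relations of $\mathcal{G}(A,B)$, which will in turn yield the universal Hopf algebra map $\psi$. Fix a basis $(e_i)_{1\leq i\leq n}$ of $V$ and write the coaction as $\alpha_V'(e_j)=\sum_i e_i\otimes v_{ij}$ for elements $v_{ij}\in H$. The group-like $d$ gives the one-dimensional comodule $D$ with coaction $e_D\mapsto e_D\otimes d$. I would define $A$ by declaring $a(e_i\otimes e_j)=A_{ij}\,e_D$ and $B$ by $b(e_D)=\sum_{i,j}(B^{-1})_{ij}\,e_i\otimes e_j$ (or a normalization of this kind), the precise placement of inverses being dictated by the antipode formula $S(x)=d^{-1}A^{-1}x^tA$ recorded in the previous proposition so that everything is consistent. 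Non-degeneracy of the bilinear forms underlying $a$ and $b$ is exactly what guarantees $A,B\in GL_n(k)$.

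**Next I would** translate the statement that $a$ and $b$ are $H$-comodule morphisms into matrix identities among the $v_{ij}$ and $d$. Writing $v=(v_{ij})$ for the matrix of coaction coefficients, the condition that $a\colon V\otimes V\to D$ intertwines the coactions forces, after comparing coefficients, the relation $v^tAv=Ad$, and the condition on $b\colon D\to V\otimes V$ forces $vBv^t=Bd$. These are precisely the defining relations of $\mathcal{G}(A,B)$ with $x_{ij}$ replaced by $v_{ij}$ and $d$ by the group-like $d$. Since $d$ is group-like in a Hopf algebra it is invertible with inverse $S(d)$, so the relation $dd^{-1}=1=d^{-1}d$ is automatically available in $H$.

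**Then** the universal property of $\mathcal{G}(A,B)$ as an algebra (it is defined as a universal algebra on generators subject to these relations) yields a unique algebra morphism $\psi\colon\mathcal{G}(A,B)\to H$ sending $x_{ij}\mapsto v_{ij}$ and $d^{\pm}\mapsto d^{\pm 1}$. It remains to check that $\psi$ is a Hopf algebra morphism, i.e.\ that it is compatible with $\Delta$, $\varepsilon$ and $S$; this follows by checking compatibility on generators using the explicit formulas of the previous proposition together with the fact that the $v_{ij}$ are the matrix coefficients of a comodule (so $\Delta_H(v_{ij})=\sum_k v_{ik}\otimes v_{kj}$ by coassociativity of $\alpha_V'$) and that $d$ is group-like. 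The intertwining equations $(id_D\otimes\psi)\circ\alpha_D=\alpha_D'$ and $(id_V\otimes\psi)\circ\alpha_V=\alpha_V'$ hold by construction, and uniqueness of $\psi$ is inherited from the universal algebra property.

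**The main obstacle** I anticipate is not the existence of $\psi$, which is essentially formal once the relations are verified, but rather pinning down the correct conventions so that the two morphisms $a$ and $b$ give rise to matrices $A$ and $B$ producing \emph{exactly} the relations $x^tAx=Ad$ and $xBx^t=Bd$ with the signs, transposes and inverses in the right places. A secondary subtlety is ensuring the map is well defined on $d^{-1}$ and respects the antipode; this is where the explicit antipode formula $S(x)=d^{-1}A^{-1}x^tA$ must be reconciled with the comodule structure, and one should verify that the $A$ extracted from $a$ is the same matrix appearing in $S$. Because the proposition as stated only asserts \emph{existence} of such $A,B$ (not a canonical choice), I would allow myself the freedom to absorb normalizing scalars, which removes any genuine rigidity from this bookkeeping.
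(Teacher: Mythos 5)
Your proposal follows the paper's proof essentially verbatim: read off $A$ and $B$ from the structure constants of $a$ and $b$ in a fixed basis, check that $H$-colinearity of $a$ and $b$ is equivalent to the matrix $v=(v_{ij})$ of coaction coefficients satisfying $v^tAv=Ad$ and $vBv^t=Bd$, note that $d^{-1}$ exists because $d$ is group-like, and invoke the universal property of $\mathcal{G}(A,B)$. One small bookkeeping point: the paper takes $B$ to be the matrix of $b$ itself (i.e.\ $b(d)=\sum_{ij}B_{ij}v_i\otimes v_j$), with no inverse, so that colinearity yields exactly $vBv^t=Bd$ --- the hedge you built in about the ``placement of inverses'' resolves in the simplest possible way.
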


\begin{proof}
Let $(v_i)_{1\leq i \leq n}$ be a basis of $V$ and $x=(x_{ij})_{1\leq i, j \leq n}$ be the associated matrix of coefficients. Let $A=(a_{ij})_{1\leq i,j \leq n}$, $B=(b_{ij})_{1\leq i, j \leq n}$ be the matrices such that $a(v_i\otimes v_j)=a_{ij}d$ and $b(d)=\sum_{ij} b_{ij}v_i\otimes v_j$. It is straightforward to check that $a$ and $b$ are $H$-colinear if and only if $x^tAx=Ad$ and $xBx^t=Bd$. Finally, since $Gr(H)$ is a group, there exists $d^{-1}\in H$ such that $dd^{-1}=1=d^{-1}d$. The universal property of $\mathcal{G}(A,B)$ gives us the result.
\end{proof}

The following lemma will limit our choice of matrices $A,B \in GL_n(k)$. The proof comes directly from Schur's lemma.
\begin{lemma}\label{L Schur}
Let $H$ be as in the previous proposition, and assume that the $H$-comodule $V$ is irreducible. Then the composition 
$$\begin{array}{c} D\otimes V \stackrel{b \otimes id}{\longrightarrow} V\otimes V\otimes V \stackrel{id\otimes a}{\longrightarrow} V\otimes D \stackrel{id\otimes b}{\longrightarrow} V\otimes V\otimes V \stackrel{a \otimes id}{\longrightarrow} D\otimes V \end{array}$$
is a multiple of the identity, i.e there exists $\lambda \in k^*$ such that :
$$ (a \otimes id) \circ (id\otimes b) \circ(id\otimes a)\circ(b \otimes id)=\lambda id_{D\otimes V}$$ When $H=\mathcal{G}(A,B)$, the relation may be rewritten as $$B^tA^tBA=\lambda I_n$$
\end{lemma}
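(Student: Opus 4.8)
The plan is to recognize the displayed map as an endomorphism of an irreducible comodule and then invoke Schur's lemma. First I would observe that $D$ is an invertible object of $\textrm{Comod}(H)$: since $d \in Gr(H)$, the one-dimensional comodule induced by $d^{-1}$ is inverse to $D$, so the functor $D \otimes (-)$ is a monoidal autoequivalence of $\textrm{Comod}(H)$. In particular it preserves irreducibility, so $D \otimes V$ is an irreducible $H$-comodule. Next I would note that the four constituent maps $b \otimes id$, $id \otimes a$, $id \otimes b$, $a \otimes id$ are all $H$-colinear (being tensor products of the colinear maps $a$, $b$ with identities), hence so is their composite. Thus the composite is an $H$-comodule endomorphism of the irreducible comodule $D \otimes V$, and since $k$ is algebraically closed, Schur's lemma forces it to equal $\lambda \cdot id_{D \otimes V}$ for some $\lambda \in k$.

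To identify $\lambda$ — and simultaneously to obtain the matrix reformulation for $H = \mathcal{G}(A,B)$ — I would evaluate the composite directly on the basis. The point here is that the composite is a map of vector spaces assembled purely from the linear maps $a$ and $b$, so its value does not depend on the comodule structure and can be computed mechanically. With $A = (a_{ij})$ and $B = (b_{ij})$ as in the previous proposition, applying the four maps in turn to $d \otimes v_l$ gives, after collecting indices, $\sum_q (B^t A^t B A)_{ql}\, d \otimes v_q$. Comparing this with the scalar form $\lambda(d \otimes v_l)$ produced by Schur's lemma yields exactly $B^t A^t B A = \lambda I_n$.

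Finally I would check $\lambda \in k^*$: since $A, B \in GL_n(k)$, the matrix $B^t A^t B A$ is a product of invertible matrices and is therefore invertible, and a scalar matrix $\lambda I_n$ is invertible precisely when $\lambda \neq 0$. The argument is short, and there is no serious obstacle; the only step requiring genuine care is the first one, namely verifying that $D$ is invertible so that $D \otimes V$ inherits irreducibility from $V$ — this is precisely what licenses the application of Schur's lemma. Everything after that is the routine bookkeeping of the four-fold composite.
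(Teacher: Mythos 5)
Your proof is correct and follows exactly the route the paper intends: the paper gives no written proof beyond the remark that the lemma "comes directly from Schur's lemma," and your argument — irreducibility of $D\otimes V$ via invertibility of $D$, colinearity of the composite, Schur's lemma over the algebraically closed field $k$, and the direct matrix computation yielding $(B^tA^tBA)_{ql}$ as the coefficient of $d\otimes v_q$ — supplies precisely the missing details, including the correct observation that $\lambda\neq 0$ because $B^tA^tBA$ is invertible.
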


The next result is part of the isomorphic classification of the Hopf algebras $\mathcal{G}(A,B)$.

\begin{prop}\label{P 2}
Let $A,B\in GL_n(k)$ and let $P,Q\in GL_n(k)$. The Hopf algebras $\mathcal{G}(A,B)$, $\mathcal{G}(P^tAP,$ $ P^{-1}BP^{-1t})$ and $\mathcal{G}( Q^tB^{-1}Q, Q^{-1}A^{-1}Q^{-1t})$ are isomorphic.
\end{prop}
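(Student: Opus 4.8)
The plan is to prove Proposition \ref{P 2} by constructing explicit Hopf algebra isomorphisms, using the universal property of $\mathcal{G}(A,B)$ to reduce the problem to checking defining relations. The key observation is that all three algebras are built from the same combinatorial data (a pair of invertible matrices), and that a change of basis $P$ in the fundamental comodule $V$, or a suitable "duality" operation, transforms one pair into another.

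First I would establish the isomorphism $\mathcal{G}(A,B)\cong\mathcal{G}(P^tAP, P^{-1}BP^{-1t})$. The idea is that this corresponds to the basis change $v_i\mapsto \sum_j P_{ji}v_j$ on the fundamental comodule, with the group-like $d$ left unchanged. Concretely, let $y=(y_{ij})$ denote the generators of $\mathcal{G}(P^tAP, P^{-1}BP^{-1t})$. I would define a candidate morphism $\varphi$ by setting $\varphi(x)=PyP^{-1}$ (read as $\varphi(x_{ij})=\sum_{k,l}P_{ik}y_{kl}(P^{-1})_{lj}$) and $\varphi(d^{\pm})=d^{\pm}$, then verify that the defining relations of $\mathcal{G}(A,B)$ are preserved. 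Substituting $x=PyP^{-1}$ into $x^tAx=Ad$ gives $P^{-1t}y^tP^t\, A\, PyP^{-1}=Ad$, which rearranges to $y^t(P^tAP)y=(P^tAP)d$, exactly the first relation for the new pair; the relation $xBx^t=Bd$ transforms analogously into $y(P^{-1}BP^{-1t})y^t=(P^{-1}BP^{-1t})d$. Since the target generators satisfy these, $\varphi$ is a well-defined algebra morphism by universality, and one checks it is a Hopf algebra morphism (compatibility with $\Delta,\varepsilon$ is immediate from the conjugation form). An inverse is given by the analogous formula with $P^{-1}$, so $\varphi$ is an isomorphism.

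Next I would treat $\mathcal{G}(A,B)\cong\mathcal{G}(Q^tB^{-1}Q, Q^{-1}A^{-1}Q^{-1t})$, which is the more structurally interesting case since it mixes the two bilinear forms and inverts them. The natural guess is that this reflects passing to the dual/contragredient comodule, for which the antipode formula $S(x)=d^{-1}A^{-1}x^tA$ is the relevant ingredient. Composing the first isomorphism (with parameter $Q$) with a canonical isomorphism $\mathcal{G}(A,B)\cong\mathcal{G}(B^{-1},A^{-1})$ would suffice; so the core task is the case $Q=I_n$, i.e. exhibiting $\mathcal{G}(A,B)\cong\mathcal{G}(B^{-1},A^{-1})$. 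For this I would try the assignment $x\mapsto d^{-1}x^t$ (up to the antipode-style twist) together with $d\mapsto d^{-1}$, or more precisely define a morphism sending the generators of $\mathcal{G}(B^{-1},A^{-1})$ to $S$-twisted transposes of the $x_{ij}$; the relations $x^tB^{-1}x=B^{-1}d$ and $xA^{-1}x^t=A^{-1}d$ for the new algebra should follow by transposing and inverting the original relations $x^tAx=Ad$, $xBx^t=Bd$ and using invertibility of $d$. The main obstacle is getting this duality morphism exactly right: one must track how the group-like $d$ and its inverse interact with transposition and matrix inversion, confirm that the candidate is multiplicative (not merely relation-preserving on generators, since transposing swaps the order of products), and verify it respects $\Delta$ and $S$. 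I expect the cleanest route is to invoke the universal property in both directions to produce mutually inverse morphisms rather than computing one inverse by hand, thereby sidestepping the bookkeeping of the antipode twist.
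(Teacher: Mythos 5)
Your first isomorphism is exactly the paper's: $f(x)=PyP^{-1}$, $f(d^{\pm})=d^{\pm}$, with the relations transforming by conjugation as you describe. That half is complete and correct.

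The second isomorphism is where there is a genuine gap. Your reduction to the case $Q=I_n$, i.e.\ to exhibiting $\mathcal{G}(A,B)\cong\mathcal{G}(B^{-1},A^{-1})$, is a sound strategy (and matches the paper, whose map $f(x)=Qyd^{-1}Q^{-1}$ is precisely the composite of the two). But your candidate for the core duality map, $x\mapsto d^{-1}x^t$, cannot be an algebra homomorphism: transposition reverses the order of products, so this assignment is at best an anti-homomorphism, and you correctly flag this as ``the main obstacle'' without resolving it. The resolution is that the correct map carries \emph{no} transpose: the paper sends $x\mapsto yd^{-1}$ (equivalently $y\mapsto xd^{-1}$) and $d^{\pm}\mapsto d^{\mp}$ --- conceptually this is tensoring the fundamental comodule by $D^{-1}$, which is isomorphic to the dual but whose coefficient matrix in the natural basis is $x_{ij}d^{-1}$, not a transpose. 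Moreover, verifying that this assignment preserves the defining relations is not automatic: since the algebra is noncommutative one must move $d^{-1}$ past the entries of $x^t$, and the paper does this via the commutation relation $(AB)x^td^{-1}=d^{-1}x^t(AB)$, itself derived from the two defining relations. Your proposed escape --- ``invoke the universal property in both directions'' --- does not sidestep this: the universal property only produces a morphism once you have written down a concrete assignment on generators and checked that the images satisfy the relations, which is exactly the computation you have deferred. So the second half of the proposition remains unproved as written.
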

\begin{proof}
Considering the first case, we denote by $x_{ij}$, $d$ and $d^{-1}$, $y_{ij}$, $d$ and $d^{-1}$ ($1\leq i,j \leq n$) the respective generators of $\mathcal{G}(A,B)$ and $\mathcal{G}(P^tAP,$ $ P^{-1}BP^{-1t})$. The defining relations $$ y^t(P^tAP)y=(P^tAP)d,  \text{ and } y(P^{-1}BP^{-1t})y^t=(P^{-1}BP^{-1t})d $$
ensure that we have an isomorphism $$f : \mathcal{G}(A,B) \to \mathcal{G}(P^tAP, P^{-1}BP^{-1t})$$ satisfying $f(x)=PyP^{-1}, f(d)=d$ and $f(d^{-1})=d^{-1}$, with inverse $f^{-1}(y)=P^{-1}xP, f^{-1}(d)=d$ and $f^{-1}(d^{-1})=d^{-1}$.

In the second case, denoting $y_{ij}$ ($1\leq i,j \leq n$), $d$ and $d^{-1}$ the generators of $\mathcal{G}( Q^tB^{-1}Q, Q^{-1}A^{-1}Q^{-1t})$, the same considerations on the defining relations $$ y^t(Q^tB^{-1}Q)y=(Q^tB^{-1}Q)d,  \text{ and } y(Q^{-1}A^{-1}Q^{-1t})y^t=(Q^{-1}A^{-1}Q^{-1t})d $$ together with the commutation relations in $\mathcal{G}(A,B)$ $$(AB)x^td^{-1}=d^{-1}x^t(AB)$$ give us an isomorphism $$f : \mathcal{G}(A,B) \to \mathcal{G}( Q^tB^{-1}Q, Q^{-1}A^{-1}Q^{-1t})$$ satisfying $f(x)=Qyd^{-1}Q^{-1}, f(d)=d^{-1}$ and $f(d^{-1})=d$, with inverse $f^{-1}(y)=Q^{-1}xd^{-1}Q$, $f^{-1}(d)=d^{-1}$ and $f^{-1}(d^{-1})=d$.
\end{proof}

Let us note that for a good choice of matrices $A,B\in GL_n(k)$, the Hopf algebra $\mathcal{G}(A,B)$ coincides with the standard quantization of the function algebra $\mathcal{O}(GL_{2}(k))$: precisely, a straightforward computation shows that 
\begin{itemize}
\item for $A=\left(\begin{array}{cc}
0 & 1 \\
-q & 0 
\end{array}\right):=A_q$ and $B=A_p$, for some $q,p\in k^*$, we get the two-parameter standard quantum $GL_2(k)$: $$\mathcal{G}(A_q,A_p)=\mathcal{O}(GL_{q,p}(2))$$ 
\item and for $A=\left(\begin{array}{cc}
0 & 1 \\
-1 & h 
\end{array}\right)$ and $B=\left(\begin{array}{cc}
-h' & 1 \\
-1 & 0 
\end{array}\right)$, with $h,h' \in k$, we get the Jordanian quantum case: $$\mathcal{G}(A,B)=\mathcal{O}_{h,h'}^{J}(GL(2))$$
\end{itemize}

(the defining relations of this two algebras can be found in \cite{Ohn}).

Moreover, we can see that we have a surjective Hopf algebra morphism $$\mathcal{G}(A,A^{-1})\to \mathcal{B}(A)$$ where $\mathcal{B}(A)$ is the Hopf algebra representing the quantum automorphism group of the non-degenerate bilinear form associated to $A$, introduced by Dubois-Violette and Launer in \cite{DVL}. In view of its definition, we can consider $\mathcal{G}(A,A^{-1})$ as the Hopf algebra representing the quantum similitude group of this non degenerate bilinear form.

\section{The cogroupoid $\mathcal{G}$}

To prove Theorem \ref{T 1} by using Schauenburg's results from \cite{Sch1} , we now proceed to construct Hopf-bigalois objects linking the Hopf algebras $\mathcal G(A,B)$ and in order to do our computations in a nice context, we put the algebras $\mathcal{G}(A,B)$ in a cogroupoid framework. We recall some basic definitions and facts about these objects (for more precise informations, we refer to \cite{Bi2}).

\begin{defi}
A $k$-\emph{cogroupoid} $C$ consists of:
\begin{itemize}
 \item a set of objects $ob(C)$.
 \item For any $X,Y \in ob(C)$, a $k$-algebra $C(X,Y)$.
 \item For any $X,Y,Z \in ob(C)$, algebra morphisms $$ \Delta_{X,Y}^Z : C(X,Y) \to C(X,Z)\otimes C(Z,Y) \text{ and } \ \varepsilon_X : C(X,X)\to k$$ and linear maps  $$S_{X,Y} : C(X,Y) \to C(Y,X)$$ satisfying several compatibility diagrams: see \cite{Bi2}, the axioms are dual to the axioms defining a groupoid.
\end{itemize}

A cogroupoid $C$ is said to be \emph{connected} if $C(X,Y)$ is a non zero algebra for any $X,Y \in ob(C)$.
\end{defi}

Let $n,m\in \Bbb N, n,m \geq 2$ and let $A,B\in GL_n(k)$, $C,D\in GL_m(k)$. We define the algebra $$\mathcal{G}(A,B|C,D):=k\left<d, d^{-1}, x_{i,j}, 1\leq i \leq n, 1\leq j \leq m \bigg| 
\begin{aligned}
&x^tAx=Cd, \\
&xDx^t=Bd
\end{aligned}, \ d^{-1}d=1=dd^{-1}\right>
$$ 

Of course the generators $x_{ij}, d $ and $d^{-1}$ in $\mathcal{G}(A,B|C,D)$ should be denoted by $x_{i,j}^{AB,CD}$, $d_{AB,CD}$  and $d^{-1}_{AB,CD}$ to express the dependence on $(A,B), (C,D)$, but there will be no confusion and we simply denote them by $x_{ij}, d $ and $d^{-1}$. It is clear that $\mathcal{G}(A,B|A,B)=\mathcal{G}(A,B)$.

In the following lemma, we construct the structural maps that will put the algebras $\mathcal{G}(A,B|C,D)$ in a cogroupoid framework.

\begin{lemma}\label{L 4}
\begin{itemize}

\item For any $A,B\in GL_n(k)$, $C,D \in GL_m(k)$ and $X,Y \in GL_p(k)$, there exist algebra maps 
$$\Delta_{AB,CD}^{XY}:\mathcal{G}(A,B|C,D)\to \mathcal{G}(A,B|X,Y)\otimes\mathcal{G}(X,Y|C,D)$$
such that $\Delta(x_{ij}) = \sum_{k=1}^p x_{ik}\otimes x_{kj}$ ($1\leq i \leq n, 1\leq j \leq m$), $\Delta(d^{-1})=d^{-1}\otimes d^{-1}$, and
$$\varepsilon_{AB}: \mathcal{G}(A,B)\to k$$ such that $\varepsilon_{AB} (x_{ij})= \delta_{ij}$ ($1\leq i \leq n, 1\leq j \leq m$), $\varepsilon (d)=1=\varepsilon (d^{-1})$, and for any $M,N\in GL_r(k)$, the following diagrams commute:

$$\xymatrix{
\mathcal{G}(A,B|C,D) \ar[r]^{\Delta_{AB,CD}^{XY} \ \ \ \ \ \ } \ar[d]_{\Delta_{AB,CD}^{MN}} & \mathcal{G}(A,B|X,Y)\otimes \mathcal{G}(X,Y|C,D) \ar[d]^{\Delta_{AB,XY}^{MN}\otimes id} \\
\mathcal{G}(A,B|M,N)\otimes \mathcal{G}(M,N|C,D) \ar[r]_{id\otimes \Delta_{MN,CD}^{XY} \ \ \ \ \ \ } & \mathcal{G}(A,B|M,N)\otimes \mathcal{G}(M,N|X,Y)\otimes \mathcal{G}(X,Y|C,D)
}$$

$$\xymatrix{
\mathcal{G}(A,B|C,D)  \ar[d]_{\Delta_{AB,CD}^{CD}} \ar[rd]& \\
\mathcal{G}(A,B|C,D)\otimes \mathcal{G}(C,D) \ar[r]_{\ \ \ \ \ \ id\otimes \varepsilon_{CD}} & \mathcal{G}(A,B|C,D)
}
\xymatrix{
\mathcal{G}(A,B|C,D)  \ar[d]_{\Delta_{AB,CD}^{AB}} \ar[rd]& \\
\mathcal{G}(A,B)\otimes \mathcal{G}(A,B|C,D) \ar[r]_{\ \ \ \ \ \ \varepsilon_{AB}\otimes id} & \mathcal{G}(A,B|C,D)
}$$

\item For any $A,B\in GL_n(k)$, $C,D\in GL_m(k)$, there exists an algebra map 
$$ S_{AB,CD}:\mathcal{G}(A,B|C,D) \to \mathcal{G}(C,D|A,B)^{op}$$

defined by the formula $S_{AB,CD}(x)=A^{-1}d^{-1}x^tC$, $S_{AB,CD}(d^{\pm 1})=d^{\mp 1}$,
such that the following diagrams commute: 
$$\xymatrix{
\mathcal{G}(A,B) \ar[r]^{\varepsilon_{AB}} \ar[d]_{\Delta_{AB,AB}^{CD}}& k \ar[r]^{u \ \ \ \ \ }& \mathcal{G}(A,B|C,D)  \\
\mathcal{G}(A,B|C,D)\otimes\mathcal{G}(C,D|A,B) \ar[rr]_{id\otimes S_{CD,AB}} & & \mathcal{G}(A,B|C,D)\otimes\mathcal{G}(A,B|C,D)\ar[u]^{m} 
}$$ $$
\xymatrix{
\mathcal{G}(A,B) \ar[r]^{\varepsilon_{AB}} \ar[d]_{\Delta_{AB,AB}^{CD}}& k \ar[r]^{u \ \ \ \ \ }& \mathcal{G}(A,B|C,D)  \\
\mathcal{G}(A,B|C,D)\otimes\mathcal{G}(C,D|A,B) \ar[rr]_{S_{AB,CD}\otimes id} & & \mathcal{G}(C,D|A,B)\otimes\mathcal{G}(A,B|C,D)\ar[u]^{m} 
}$$
\end{itemize}
\end{lemma}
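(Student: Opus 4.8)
The plan is to use that each algebra $\mathcal{G}(A,B|C,D)$ is presented by the generators $x_{ij},d,d^{-1}$ subject only to $x^tAx=Cd$, $xDx^t=Bd$ and $dd^{-1}=d^{-1}d=1$. Hence to produce an algebra map out of $\mathcal{G}(A,B|C,D)$ (or, in the case of $S$, an algebra map into an \emph{opposite} algebra) it suffices to prescribe the images of the generators and to check that these three defining relations are sent to valid identities in the target. Once the three families of maps are in hand, every diagram in the statement is an equality of algebra (anti)morphisms whose common source is generated by $x_{ij},d,d^{-1}$, so it is enough to verify commutativity on generators, which is then routine bookkeeping.

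For $\Delta_{AB,CD}^{XY}$ I would read $\Delta(x)$ as the matrix product $(x'\otimes 1)(1\otimes x'')$, where $x'$ and $x''$ are the generator matrices of $\mathcal{G}(A,B|X,Y)$ and $\mathcal{G}(X,Y|C,D)$, so that entries from different tensor factors commute. Feeding in the relations $(x')^tAx'=Xd'$ and $(x'')^tXx''=Cd''$ gives
$$\Delta(x)^tA\Delta(x)=(1\otimes (x'')^t)\big[((x')^tAx')\otimes 1\big](1\otimes x'')=(1\otimes (x'')^t)(X\otimes 1)(1\otimes x'')\,(d'\otimes 1)=C\,(d'\otimes d''),$$
and symmetrically $\Delta(x)D\Delta(x)^t=B\,(d'\otimes d'')$ from $x'Y(x')^t=Bd'$ and $x''D(x'')^t=Yd''$; the key point is that the auxiliary matrices $X,Y$ cancel. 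Since $\Delta(d^{\pm})=(d')^{\pm}\otimes(d'')^{\pm}$ visibly respects $dd^{-1}=1$, the map is well defined. For $\varepsilon_{AB}$ the check is immediate: sending $x\mapsto I_n$ and $d^{\pm}\mapsto 1$ turns the relations into $A=A$, $B=B$, $1=1$.

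The delicate point, and what I expect to be the main obstacle, is $S_{AB,CD}$, which must be an algebra morphism into the opposite algebra $\mathcal{G}(C,D|A,B)^{op}$. Writing $z=A^{-1}d^{-1}x^tC$ for the proposed image (with $x,d$ the generators of the target), one must verify that $z$ satisfies the source relations with all products evaluated in the opposite algebra; because the opposite product reverses the order of factors, a relation of the form $x^tAx=Cd$ is matched only after transposing and invoking the target relations $x^tCx=Ad$ and $xBx^t=Dd$ together with the invertibility of $d$. The genuine difficulty is that $d$ is \emph{not} central: pushing $d^{-1}$ past the $x_{ij}$ forces one to first derive the relevant commutation relations between $d$ and $x$ from the defining relations, of the type $(AB)x^td^{-1}=d^{-1}x^t(AB)$ noted earlier.

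Finally I would check the five diagrams on generators. Coassociativity and the two counit triangles are immediate: on $x_{ij}$ both legs of the square give $\sum_{k,l}x_{ik}\otimes x_{kl}\otimes x_{lj}$, while $(\mathrm{id}\otimes\varepsilon)\Delta(x_{ij})=\sum_k x_{ik}\delta_{kj}=x_{ij}$, and likewise on the other side. For the antipode diagrams, substituting the formula for $S$ into $m\circ(S\otimes\mathrm{id})\circ\Delta$ (resp. $m\circ(\mathrm{id}\otimes S)\circ\Delta$) and collapsing the inner sum via $x^tCx=Ad$ (resp. the companion relation) leaves $\sum_p(A^{-1})_{ip}A_{pj}\,d^{-1}d=\delta_{ij}$, which is exactly $u\circ\varepsilon_{AB}$; as above, one of the two axioms again requires the $d$–$x$ commutation relations, since there $d^{-1}$ sits between two factors of $x$ rather than on the outside.
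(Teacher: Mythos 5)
Your proposal is correct and follows essentially the same route as the paper: define $\Delta$, $\varepsilon$ and $S$ on generators, check that the defining relations are preserved (with the opposite-algebra case of $S$ handled via the transposition rule $\Phi(ab)=(\Phi(b)^t\Phi(a)^t)^t$ and the target relations), and then verify all diagrams on generators. The only cosmetic differences are that you organize the computations in matrix form and anticipate invoking the $d$--$x$ commutation relations, whereas the paper normalizes the relation to $C^{-1}d^{-1}x^tAx=1$ so that the inner block collapses to $1$ directly.
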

\begin{proof}
First we have to check that the algebra maps are well defined. 

Let $A,B\in GL_n(k)$, $C,D \in GL_m(k)$ and $X,Y \in GL_p(k)$; in order to simplify the notations, we denote $\Delta_{AB,CD}^{XY}=\Delta$, $\varepsilon_{AB}=\varepsilon$ and $S_{AB,CD}=S$. We only give the computations for the first relation $x^tAx=Cd$, the computations for second one being similar. 

For $\Delta : \mathcal{G}(A,B|C,D)\to \mathcal{G}(A,B|X,Y)\otimes\mathcal{G}(X,Y|C,D)$, we compute: 
$$\begin{aligned}
\Delta(x^tAx)_{ij}&=\Delta (\sum_{kl}A_{kl}x_{ki}x_{lj})=\sum_{kl}A_{kl}(\sum_p x_{kp}\otimes x_{pi})(\sum_q x_{lq}\otimes x_{qj})\\
&=\sum_{pq}\sum_{kl} A_{kl}x_{kp}x_{lq}\otimes x_{pi}x_{qj}=\sum_{pq} (x^tAx)_{pq}\otimes x_{pi}x_{qj}\in \mathcal{G}(A,B|X,Y)\otimes\mathcal{G}(X,Y|C,D)\\
&=\sum_{pq} X_{pq}d\otimes x_{pi}x_{qj}=d\otimes(x^tXx)_{ij}=C_{ij} d\otimes d
\end{aligned}$$

and the computations for $\varepsilon: \mathcal{G}(A,B)\to k$ are:
$$\begin{aligned}
\varepsilon ((x^tAx)_{ij})&=\varepsilon(\sum_{kl} A_{kl}x_{ki}x_{lj})
=\sum_{kl} A_{kl} \varepsilon (x_{ki})\varepsilon (x_{lj})\\
&=\sum_{kl} A_{kl} \delta_{ki}\delta_{lj}=A_{ij}=A_{ij}\varepsilon(d)
\end{aligned}$$

Then $\Delta_{AB,CD}^{XY}$ and $\varepsilon_{AB}$ are well defined. These maps are algebra maps, so it is enough to check the commutativity of the diagrams of the first part on the generators of $\mathcal{G}(A,B|C,D)$, which is obvious. 

Recall that if $\Phi : A \to B^{op}$ is an algebra map, then we have $\Phi(ab)=(\Phi(b)^t\Phi(a)^t)^t$ for all matrices $a,b\in M_n(A)$.

Then, for $S:\mathcal{G}(A,B|C,D) \to \mathcal{G}(C,D|A,B)^{op}$, we have:

$$\begin{aligned}
 S(C^{-1}d^{-1}x^tAx)&=\big(S(x)^tA^tS(x)d(C^{-1})^t\big)^t\\
 &=\big((C^td^{-1}x(A^{-1})^t)A^t(d^{-1}A^{-1}x^tC)d(C^{-1})^t\big)^t\\
 &=\big(C^td^{-1}xd^{-1}A^{-1}x^tCd(C^{-1})^t\big)^t\\
 &=\big(C^td^{-1}d(C^{-1})^t\big)^t\\
 &=1
\end{aligned}$$

We can check in the same way that $S$ is compatible with the second relation, and then $S=S_{AB,CD}$ is well defined.
The commutativity of the diagrams follows from the verification on the generators of $\mathcal{G}(A,B)$ and the fact that $\Delta_{\bullet,\bullet}^{\bullet}$, $\varepsilon_{\bullet}$ and $S_{\bullet,\bullet}$ are algebra maps.
\end{proof}

The lemma allows the following definition:

\begin{defi}
The cogroupoid $\mathcal{G}$ is the cogroupoid defined as follows:
\begin{enumerate}[(i)]
\item $ob(\mathcal{G})=\{(A,B)\in GL_m(k)\times GL_m(k), m\geq 1\}$,
\item For $(A,B),(C,D)\in ob(\mathcal{G})$, the algebra $\mathcal{G}(A,B|C,D)$ is the algebra defined above,
\item the structural maps $\Delta_{\bullet,\bullet}^{\bullet}$, $\varepsilon_{\bullet}$ and $S_{\bullet,\bullet}$ are defined in the previous lemma.
\end{enumerate}
\end{defi}

So we have a cogroupoid linking all the Hopf algebras $\mathcal{G}(A,B)$. 
The following result is part of the isomorphic classification of the algebras $\mathcal{G}(A,B|C,D)$, which will be completed in Theorem \ref{T 5}, and this proposition will be used in the appendix.

\begin{prop}\label{P 1}
Let $A,B,P \in GL_n(k)$, $C,D, Q \in GL_m(k)$. We have algebra isomorphisms
$$\mathcal{G}(A,B|C,D) \simeq \mathcal{G}(P^tAP,P^{-1}BP^{-1t}|Q^tCQ,Q^{-1}DQ^{-1t}) $$
$$ \mathcal{G}(A,B|C,D) \simeq \mathcal{G}(B^{-1},A^{-1}|D^{-1},C^{-1})$$ 
\end{prop}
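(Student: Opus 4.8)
The plan is to prove both isomorphisms by exhibiting explicit algebra maps on the generators and checking that they are well defined, i.e. that the images of the defining relations of the source hold in the target; the inverse maps will have the same shape, so bijectivity is immediate. Throughout I denote by $x$ the generator matrix of $\mathcal{G}(A,B|C,D)$ and by $y$ that of the target algebra, and I use freely that the scalar matrices $A,B,C,D,P,Q$ commute entrywise with $d^{\pm 1}$.

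For the first isomorphism I would set $f(x)=PyQ^{-1}$ and $f(d^{\pm 1})=d^{\pm 1}$, where $y$ is the generator matrix of $\mathcal{G}(P^tAP,P^{-1}BP^{-1t}|Q^tCQ,Q^{-1}DQ^{-1t})$. Since $P,Q$ are scalar, a direct substitution gives $f(x)^t A f(x)=Q^{-1t}\,y^t(P^tAP)y\,Q^{-1}=Q^{-1t}(Q^tCQ)\,d\,Q^{-1}=Cd$ and, likewise, $f(x)D f(x)^t=P\,y(Q^{-1}DQ^{-1t})y^t\,P^t=Bd$, which are exactly the images of the relations $x^tAx=Cd$ and $xDx^t=Bd$. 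Hence $f$ is well defined, and $g(y)=P^{-1}xQ$, $g(d^{\pm1})=d^{\pm1}$ (well defined by the same computation) is its inverse. This step is a pure change of basis and presents no difficulty.

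For the second isomorphism I would set $f(x)=yd^{-1}$ and $f(d^{\pm 1})=d^{\mp 1}$, where now $y$ is the generator matrix of $\mathcal{G}(B^{-1},A^{-1}|D^{-1},C^{-1})$, and verify $f(x)^tA f(x)=Cd^{-1}$ and $f(x)Df(x)^t=Bd^{-1}$. Since $(yd^{-1})^t=y^td^{-1}$, one computes $f(x)^tAf(x)=y^td^{-1}Ayd^{-1}=(y^tAd^{-1}y)d^{-1}$ and $f(x)Df(x)^t=(yd^{-1}Dy^t)d^{-1}$, so everything reduces to the two identities $y^tAd^{-1}y=C$ and $yd^{-1}Dy^t=B$ in $\mathcal{G}(B^{-1},A^{-1}|D^{-1},C^{-1})$. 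These are not defining relations and must be derived. This derivation is where I expect the real work to lie: the two defining relations of the target force the generator matrix $y$ to admit both a left inverse $d^{-1}Dy^tB^{-1}$ (from $y^tB^{-1}y=D^{-1}d$) and a right inverse $C^{-1}y^tAd^{-1}$ (from $yC^{-1}y^t=A^{-1}d$), and the standard argument $L=L(yR)=(Ly)R=R$ shows these coincide, yielding the commutation relation $d^{-1}Dy^tB^{-1}=C^{-1}y^tAd^{-1}$. Multiplying this on the right, resp. on the left, by $y$ and simplifying with the defining relations produces exactly $y^tAd^{-1}y=C$ and $yd^{-1}Dy^t=B$.

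I expect this invertibility-and-commutation step to be the main obstacle, as it is the only place where the noncommutative bookkeeping is delicate: one must not assume $d$ is central, and it is essential that in $(yd^{-1})^t=y^td^{-1}$ the factor $d^{-1}$ stays adjacent to $y^t$, so that $y^td^{-1}A=y^tAd^{-1}$ places it in the middle position $y^tAd^{-1}y$ where the derived identity applies. Once these identities are in hand, $f$ is well defined; the map $g(y)=xd^{-1}$, $g(d^{\pm1})=d^{\mp1}$ is well defined by the symmetric derivation applied to the generator matrix of $\mathcal{G}(A,B|C,D)$ (which gives $x^tB^{-1}d^{-1}x=D^{-1}$ and $xd^{-1}C^{-1}x^t=A^{-1}$), and a short check shows $g$ is a two-sided inverse of $f$, completing the proof.
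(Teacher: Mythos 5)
Your proposal is correct and uses exactly the same isomorphisms as the paper: $f(x)=PyQ^{-1}$, $f(d^{\pm1})=d^{\pm1}$ for the change-of-basis case, and $f(x)=yd^{-1}$, $f(d^{\pm1})=d^{\mp1}$ with inverse $g(y)=xd^{-1}$ for the second case. The only difference is that you make explicit, via the left-inverse-equals-right-inverse argument $d^{-1}Dy^tB^{-1}=C^{-1}y^tAd^{-1}$, the derived identities $y^tAd^{-1}y=C$ and $yd^{-1}Dy^t=B$ that the paper's proof leaves implicit when it asserts that the defining relations ``ensure'' the morphism is well defined; your verification of these identities is accurate.
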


\begin{proof}
For the first case, let us denote the generators of $\mathcal{G}(P^tAP,P^{-1}BP^{-1t}|Q^tCQ,Q^{-1}DQ^{-1t})$ by $y_{ij}$ ($1\leq i \leq n, 1\leq j\leq m$), $d,d^{-1}$.
Then the relations $$ x^t(P^tAP)x=(Q^tCQ)d,  \text{ and } x(Q^{-1}DQ^{-1t})x^t=(P^{-1}BP^{-1t})d   $$ ensure that we have an algebra morphism $\psi : \mathcal{G}(A,B|C,D) \to \mathcal{G}(P^tAP,P^{-1}BP^{-1t}|Q^tCQ,Q^{-1}DQ^{-1t})$ defined by $\psi(d)=d$, $\psi(d^{-1})=d^{-1}$ and $\psi(x)=PyQ^{-1}$. The inverse map is then defined by $\psi^{-1}(d)=d$, $\psi^{-1}(d^{-1})=d^{-1}$ et $\psi^{-1}(y)=P^{-1}xQ$

For the second case, let us denote the generators of $\mathcal{G}(B^{-1},A^{-1}|D^{-1},C^{-1})$ by $y_{ij}$ ($1\leq i \leq n, 1\leq j\leq m$), $d,d^{-1}$. Then the relations $$y^tB^{-1}y=D^{-1}d, \text{ and } yC^{-1}y^t=A^{-1}d$$ ensure that we have an algebra morphism $\psi : \mathcal{G}(A,B|C,D) \to \mathcal{G}(B^{-1},A^{-1}|D^{-1},C^{-1})$ given by $\psi(d^{\pm})=d^{\mp}$ and $\psi(x)=yd^{-1}$. This is an isomorphism with inverse map defined by $\psi^{-1}(d^{\pm})=d^{\mp}$ and $\psi^{-1}(y)=xd^{-1}$.
\end{proof}

Now the natural question is to study the connectedness of $\mathcal{G}$, which will ensure that we indeed get
Hopf-Galois objects and hence equivalences of monoidal categories.

\begin{lemma}\label{L 1}
Let $q \in k^*$ and let $C,D\in GL_m(k)$ such that $tr(CD^t)=1+q^2$ and $D^tC^tDC=q^2I_m$. Then the algebra $\mathcal{G}(A_q,A_q|C,D)$ is non zero.
\end{lemma}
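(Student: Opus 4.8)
The statement to prove is that the connecting algebra $\mathcal{G}(A_q,A_q|C,D)$ is nonzero. I would first reduce this to an existence problem: any unital algebra homomorphism from $\mathcal{G}(A_q,A_q|C,D)$ into a nonzero algebra $R$ sends $1\mapsto 1\neq 0$ and hence forces $1\neq 0$ in the source, so it suffices to exhibit, in some nonzero $R$, a $2\times m$ matrix $x$ and an invertible $\delta$ satisfying $x^tA_qx=C\delta$ and $xDx^t=A_q\delta$. Writing the rows of $x$ as $u=(u_1,\dots,u_m)$ and $v=(v_1,\dots,v_m)$, these become the scalar system $u_iv_j-qv_iu_j=C_{ij}\delta$ together with $uDu^t=0$, $uDv^t=\delta$, $vDu^t=-q\delta$, $vDv^t=0$. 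A rank count shows why this is delicate: over a commutative $R$ the product $x^t(A_qx)$ has rank at most $2$, while $C\delta$ is invertible of rank $m$, so no commutative solution exists once $m>2$. The hypotheses must therefore be spent to build a genuinely noncommutative solution, and this is the crux.

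Next I would normalize $(C,D)$. By Proposition \ref{P 1} (with $P=I_2$) there is an algebra isomorphism $\mathcal{G}(A_q,A_q|C,D)\cong\mathcal{G}(A_q,A_q|Q^tCQ,\,Q^{-1}DQ^{-1t})$ for every $Q\in GL_m(k)$, and a direct computation shows both hypotheses are invariant: $CD^t\mapsto Q^t(CD^t)Q^{-t}$ is a conjugation so $\mathrm{tr}(CD^t)$ is preserved, while $D^tC^tDC\mapsto Q^{-1}(D^tC^tDC)Q$ preserves the scalar $q^2$. It thus suffices to treat one representative per $Q$-orbit. The hypotheses already constrain the orbit: manipulating $D^tC^tDC=q^2I_m$ gives $C^tD=q^2(CD^t)^{-1}$, and since $C^tD$ is conjugate to its transpose $D^tC=C^{-1}(CD^t)C$, we deduce that $CD^t$ is conjugate to $q^2(CD^t)^{-1}$; combined with $\mathrm{tr}(CD^t)=1+q^2$ this forces the spectrum of $CD^t$ to be stable under $\mu\mapsto q^2/\mu$ with prescribed trace, i.e.\ the expected ``$GL_q(2)$-type'' shape. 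I would exploit this to put $(C,D)$ into a standard block form adapted to the pairing $\mu\leftrightarrow q^2/\mu$.

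For the normalized pair I would then prove nonzeroness concretely. The most self-contained route is Bergman's Diamond Lemma: fix an admissible monomial order on the generators $x_{ij},d,d^{-1}$, orient the relations above (and $dd^{-1}=1=d^{-1}d$) as reduction rules, and verify that every overlap ambiguity resolves. I expect the two hypotheses to be exactly the compatibility conditions forcing confluence, with $D^tC^tDC=q^2I_m$ collapsing the cubic overlaps among the $x$'s and $\mathrm{tr}(CD^t)=1+q^2$ closing the remaining quadratic ``loop'' ambiguity; confluence then yields a PBW-type basis of irreducible monomials in which $1$ is nonzero, proving $\mathcal{G}(A_q,A_q|C,D)\neq 0$. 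A more conceptual alternative is to realize the solution inside a nonzero left $\mathcal{O}(GL_q(2))$-comodule algebra: under the left coaction each column $\binom{u_j}{v_j}$ of $x$ is a copy of the fundamental $2$-dimensional comodule, so one produces $m$ such copies inside an explicit comodule algebra built from $\mathcal{O}(GL_q(2))$ subject to the relations, the hypotheses guaranteeing consistency.

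The main obstacle is precisely this last step — establishing confluence of the reduction system (equivalently, exhibiting the consistent explicit realization). This is where both hypotheses are consumed, and it is the technical heart of the connectedness of the cogroupoid $\mathcal{G}$ announced in the introduction; the reductions preceding it are formal.
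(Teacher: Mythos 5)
Your overall strategy coincides with the paper's: normalize $(C,D)$ using Proposition \ref{P 1} and then apply Bergman's Diamond Lemma to a suitable presentation. But the proposal stops exactly where the proof has to begin. You write that you ``expect'' the two hypotheses to force confluence and identify the resolution of the ambiguities as ``the main obstacle'' and ``the technical heart'' --- and then you do not do it. For this lemma the confluence check \emph{is} the proof: the paper's appendix consists of writing the relations explicitly, deriving from $D^tC^tDC=q^2I_m$ the commutation rules $x_{1j}d=-q\sum_k(C^{-1}D^{-1})_{kj}dx_{1k}$ and $x_{2j}d=-q^{-1}\sum_k(CD)_{jk}dx_{2k}$, using $\sum_{k,l}C_{kl}D_{kl}=1+q^2$ to discard one redundant relation, choosing a compatible monomial order, listing all overlap ambiguities, and resolving each by explicit computation. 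None of this is present in your argument, so as it stands there is a genuine gap rather than a proof.

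Two further points where your plan diverges from what actually works. First, the normalization the paper needs is not a spectral normal form of $CD^t$ adapted to $\mu\mapsto q^2/\mu$ (which you would have to justify exists, Jordan blocks included, and which is not what is used); it is the much more modest statement that one can conjugate $D$ so that $D_{mm}=0$. This is what allows the relations $xDx^t=A_qd$ to be solved for the leading monomials $x_{1m}x_{2v}$, $x_{1m}x_{1v}$, $x_{2m}x_{2v}$ with $(m,v)$ the maximal index with $D_{mv}\neq 0$, giving a reduction system with well-defined leading terms. Second, you propose to include $dd^{-1}=1=d^{-1}d$ among the reduction rules; the paper instead runs the Diamond Lemma on the algebra $\mathcal{M}(A_q,A_q|C,D)$ generated by the $x_{ij}$ and $d$ only, obtains a basis of reduced monomials, observes that left multiplication by $d$ permutes reduced monomials injectively so that $d$ is not a zero divisor, and only then localizes at $\{d^n\}$ to recover $\mathcal{G}(A_q,A_q|C,D)\neq 0$. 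Adjoining $d^{-1}$ directly would create additional ambiguities (between $d^{-1}$ and the $x_{ij}d$ rules) that your sketch does not account for. Your alternative suggestion of realizing the relations inside an explicit $\mathcal{O}(GL_q(2))$-comodule algebra is attractive but is likewise only stated, not constructed, and your own rank computation shows why no naive (commutative) realization can exist for $m>2$.
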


The (technical) proof of this result is done in the appendix. 
We get the following corollary.

\begin{cor}
Let $\lambda, \mu \in k^*$. Consider the full subcogroupoid $\mathcal{G}^{\lambda, \mu}$ of $\mathcal{G}$ with objects $$ob(\mathcal{G}^{\lambda, \mu})=\{(A,B)\in ob(\mathcal{G}) \ ; \ B^tA^tBA=\lambda I_n  \text{ and } tr(AB^t)=\mu   \}$$
Then $\mathcal{G}^{\lambda, \mu}$ is a connected cogroupoid.
\end{cor}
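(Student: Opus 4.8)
The plan is to deduce connectedness from two ingredients: the hard computational input Lemma \ref{L 1}, and the standard structural fact (Bichon \cite{Bi2}) that in any cogroupoid $\mathcal{C}$ the relation ``$\mathcal{C}(X,Y)\neq 0$'' is an equivalence relation on objects. It is reflexive since each $\mathcal{C}(X,X)$ is a Hopf algebra, symmetric through the antipodes $S_{X,Y}$, and transitive because a nonzero $\mathcal{C}(X,Y)$ is a $\mathcal{C}(X,X)$--$\mathcal{C}(Y,Y)$-bi-Galois object and such objects compose by cotensor product. Granting this, to prove that $\mathcal{G}^{\lambda,\mu}$ is connected it suffices to produce a single object $X_0$ with $\mathcal{G}(X_0\,|\,C,D)\neq 0$ for every object $(C,D)$: symmetry and transitivity then link any two objects through $X_0$. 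The natural base point is $(A_q,A_q)$ and the nonvanishing will come from Lemma \ref{L 1}; the one obstruction is that this lemma is stated for targets with invariants $(q^2,1+q^2)$, so the first step must normalize the arbitrary pair $(\lambda,\mu)$ to that shape.

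For the normalization I would fix a square root $\sqrt{\lambda}$ of $\lambda$, choose $q\in k^*$ a root of $X^2-\sqrt{\lambda^{-1}}\mu X+1$ (which exists since $k$ is algebraically closed, with $q\neq 0$ because the product of the roots equals $1$), and set $\nu=q\sqrt{\lambda^{-1}}\in k^*$. The assignment $\Phi_\nu(A,B)=(A,\nu B)$ sends the invariants $(\lambda,\mu)$ to $(\nu^2\lambda,\nu\mu)=(q^2,1+q^2)$, the second coordinate equaling $1+q^2$ precisely because $q$ solves the quadratic; hence $\Phi_\nu$ is a bijection $ob(\mathcal{G}^{\lambda,\mu})\to ob(\mathcal{G}^{q^2,1+q^2})$. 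The key point is that comparing presentations gives $\mathcal{G}(A,\nu B\,|\,C,\nu D)=\mathcal{G}(A,B\,|\,C,D)$ as algebras, since the scalar $\nu$ cancels from the relation $x(\nu D)x^t=(\nu B)d$ while $x^tAx=Cd$ is untouched. Thus $\Phi_\nu$ preserves every hom-algebra verbatim, and $\mathcal{G}^{\lambda,\mu}$ is connected if and only if $\mathcal{G}^{q^2,1+q^2}$ is.

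It then remains to treat the normalized case with base point $X_0=(A_q,A_q)$, where $A_q=\left(\begin{smallmatrix}0&1\\-q&0\end{smallmatrix}\right)$. A direct computation gives $A_q^tA_q^tA_qA_q=q^2I_2$ and $tr(A_qA_q^t)=1+q^2$, so $(A_q,A_q)\in ob(\mathcal{G}^{q^2,1+q^2})$. Any object $(C,D)$ of this subcogroupoid satisfies $D^tC^tDC=q^2I_m$ and $tr(CD^t)=1+q^2$, which are exactly the hypotheses of Lemma \ref{L 1}; hence $\mathcal{G}(A_q,A_q\,|\,C,D)\neq 0$. So $(A_q,A_q)$ is connected to every object, and the equivalence-relation property upgrades this to full connectedness of $\mathcal{G}^{q^2,1+q^2}$, whence, by the previous paragraph, of $\mathcal{G}^{\lambda,\mu}$.

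As for where the difficulty lies: the genuine obstacle is Lemma \ref{L 1} itself, deferred to the appendix, which must exhibit an explicit nonzero quotient or representation of $\mathcal{G}(A_q,A_q\,|\,C,D)$. Inside the corollary the only delicate points are getting the rescaling exponents right so that the target invariants land exactly on $(q^2,1+q^2)$ — which is where the quadratic $X^2-\sqrt{\lambda^{-1}}\mu X+1$ of Theorem \ref{T 1} reappears — and correctly invoking the symmetry and transitivity of \cite{Bi2} to promote the one-sided nonvanishing coming from Lemma \ref{L 1} into connectedness between arbitrary pairs of objects.
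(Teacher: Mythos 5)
Your proposal is correct and follows essentially the same route as the paper: normalize the invariants to $(q^2,1+q^2)$ by rescaling (using the identity $\mathcal{G}(A,B|C,D)=\mathcal{G}(\alpha A,\beta B|\alpha C,\beta D)$), apply Lemma \ref{L 1} with base point $(A_q,A_q)$, and invoke the criterion of \cite{Bi2} (Proposition 2.15) that a single object linked to all others forces connectedness. The only cosmetic difference is that you transport the whole subcogroupoid to $\mathcal{G}^{q^2,1+q^2}$, whereas the paper rescales each target pair and keeps the base point $(\sqrt{\lambda}A_q,q^{-1}A_q)$ inside $\mathcal{G}^{\lambda,\mu}$.
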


\begin{proof}
Let $(A,B) \in ob(\mathcal{G}^{\lambda, \mu})$. By the relations defining those algebras, if $\alpha, \beta \in k^*$, $C,D\in GL_m(k)$ then: $$\mathcal{G}(A,B|C,D)=\mathcal{G}(\alpha A, \beta B|\alpha C,\beta D)$$
Choose $q\in k^*$ satisfying $q^2-\sqrt{\lambda^{-1}}\mu q+1=0$ and put $A'=\sqrt{\lambda^{-1}}A$ and $B'=qB$. We have  $tr(A'B'^t)=1+q^2$ and $B'^tA'^tB'A'=q^2I_m$. By Lemma \ref{L 1}, we have that $\mathcal{G}(A_q,A_q|A',B')$ is non zero and so is $\mathcal{G}(\sqrt{\lambda}A_q,q^{-1}A_q|A,B)$. Then we have found $X\in ob(\mathcal{G}^{\lambda, \mu})$ such that $\mathcal{G}(X|A,B)\neq (0)$ for all $(A,B)\in ob(\mathcal{G}^{\lambda, \mu})$. According to \cite{Bi2}, Proposition 2.15, the cogroupoid $\mathcal{G}^{\lambda, \mu}$ is connected.  
\end{proof}

Hence by \cite{Bi2}, Proposition 2.8 and Schauenburg's Theorem 5.5 \cite{Sch1}, we have the following result:

\begin{theo}
Let $(A,B), (C,D)\in ob(\mathcal{G}^{\lambda, \mu})$. Then we have a $k$-linear equivalence of  monoidal categories $$\emph{\textrm{Comod}}(\mathcal{G}(A,B))\simeq^{\otimes}\emph{\textrm{Comod}}(\mathcal{G}(C,D))$$ between the comodule categories of $\mathcal{G}(A,B)$ and $\mathcal{G}(C,D)$ respectively.
\end{theo}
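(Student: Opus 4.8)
The plan is to deduce the statement as a formal consequence of the connectedness of $\mathcal{G}^{\lambda,\mu}$ established in the preceding corollary, combined with the general theory of connected cogroupoids (\cite{Bi2}) and Schauenburg's bi-Galois theorem (\cite{Sch1}). The point is that all the genuine work has already been carried out in verifying that $\mathcal{G}^{\lambda,\mu}$ is connected, which ultimately rests on the nonvanishing result of Lemma~\ref{L 1} proved in the appendix; what remains is to assemble known results and to keep track of the relevant identifications.

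First I would record that, since $(A,B),(C,D)\in ob(\mathcal{G}^{\lambda,\mu})$ and $\mathcal{G}^{\lambda,\mu}$ is connected by the corollary, the hom-algebra $\mathcal{G}(A,B|C,D)$ is nonzero. I would then invoke \cite{Bi2}, Proposition~2.8, whose content is that in a connected cogroupoid every hom-algebra $C(X,Y)$ is a $C(X,X)$-$C(Y,Y)$-bi-Galois object. Applied here, using the identifications $\mathcal{G}(A,B|A,B)=\mathcal{G}(A,B)$ and $\mathcal{G}(C,D|C,D)=\mathcal{G}(C,D)$ noted after the definition of the cogroupoid, this produces a $\mathcal{G}(A,B)$-$\mathcal{G}(C,D)$-bi-Galois object structure on $\mathcal{G}(A,B|C,D)$. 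The left $\mathcal{G}(A,B)$-coaction and right $\mathcal{G}(C,D)$-coaction are exactly the structural maps $\Delta_{AB,CD}^{AB}$ and $\Delta_{AB,CD}^{CD}$ constructed in Lemma~\ref{L 4}, and the Galois conditions are encoded in the compatibility diagrams verified there together with the antipode maps $S_{\bullet,\bullet}$.

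Next I would apply Schauenburg's Theorem~5.5 of \cite{Sch1}: the existence of an $H$-$L$-bi-Galois object $Z$ yields a $k$-linear equivalence of monoidal categories $\textrm{Comod}(H)\simeq^{\otimes}\textrm{Comod}(L)$, the equivalence being implemented by cotensoring with $Z$. Taking $H=\mathcal{G}(A,B)$, $L=\mathcal{G}(C,D)$, and $Z=\mathcal{G}(A,B|C,D)$ from the previous step, this gives precisely the asserted monoidal equivalence $\textrm{Comod}(\mathcal{G}(A,B))\simeq^{\otimes}\textrm{Comod}(\mathcal{G}(C,D))$.

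I do not expect a serious obstacle at this final stage, exactly because the hard input — connectedness, and behind it the nonvanishing of Lemma~\ref{L 1} — is handled separately. The only care required is bookkeeping: confirming that the coactions coming from $\Delta^{AB}_{AB,CD}$ and $\Delta^{CD}_{AB,CD}$ meet the (bi-)Galois hypotheses in the exact form demanded by \cite{Bi2} and \cite{Sch1}, and that working over the algebraically closed field $k$ places us within the scope of those theorems. Once these identifications are in place the conclusion is immediate, so the theorem is genuinely a corollary of the connectedness established above.
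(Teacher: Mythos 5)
Your proposal is correct and follows exactly the route the paper takes: the paper deduces this theorem in one line from the connectedness corollary together with Proposition~2.8 of \cite{Bi2} (bi-Galois structure on the hom-algebras of a connected cogroupoid) and Schauenburg's Theorem~5.5 of \cite{Sch1}. You have merely spelled out the same citations and identifications in more detail.
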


We are ready to prove Theorem \ref{T 1}.

\begin{proof}[Proof of Theorem \ref{T 1}]
First, note that we have $\mathcal{G}(A,B)=\mathcal{G}(\alpha A,\beta B)$ for all $\alpha, \beta \in k^*$. Let $q\in k^*$ such that $q^2-\sqrt{\lambda^{-1}}tr(AB^t)q+1=0$. Then, by the above theorem, we have a $k$-linear equivalence of monoidal categories $$\textrm{Comod}(\mathcal{G}(A,B))=\textrm{Comod}(\mathcal{G}(\sqrt{\lambda^{-1}}A,qB))\simeq^{\otimes}\textrm{Comod}(\mathcal{O}(GL_q(2)))$$
and we are done.
\end{proof}


%

\section{$GL(2)$-deformations}

In this section $k$ will be an algebraically closed field of characteristic zero. This paragraph is essentially devoted to the proof of Theorem \ref{T 2}. We also complete the isomorphic and Morita equivalence classifications of the Hopf algebras $\mathcal{G}(A,B)$.

Recall that the corepresentation semi-ring (or fusion semi-ring) of a cosemisimple Hopf algebra $H$, denoted $\mathcal R^+(H)$,
is the set of isomorphism classes of finite-dimensional $H$-comodules. 
The direct sum of comodules defines the addition while the tensor product of comodules defines
the multiplication. The isomorphism classes of simple $H$-comodules form a basis of $\mathcal R^+(H)$. 
The isomorphism class of a finite-dimensional $H$-comodule $V$ is denoted by $[V]$. 

Let $K$ be another cosemisimple Hopf algebra, and let $f : H \to K$ a Hopf algebra morphism. Then $f$ induces a monoidal functor $f_*: \textrm{Comod}_f(H) \to \textrm{Comod}_f(K)$ and a semi-ring morphism $f_* :\mathcal{R}^+(H) \to\mathcal{R}^+(K)$. A semi-ring isomorphism $\mathcal{R}^+(H) \simeq \mathcal{R}^+(K)$ induces a bijective correspondence (that preserves tensor products) between the isomorphism classes of simple comodules of $H$ and $K$.

Let $G$ be a  reductive algebraic group. As usual we say that the cosemisimple Hopf algebra $H$ is a $G$-deformation if $\mathcal{R}^+(\mathcal{O}(G)) \simeq \mathcal{R}^+(H)$. Hence Theorem \ref{T 2} classifies $GL(2)$-deformations.\\

We now recall the representation theory of $GL_q(2)$. Our references are Ohn \cite{Ohn} for the generic case and the root of unity case can be adapted from the representation theory of $SL_q(2)$ given by Kondratowicz and Podl\`es in \cite{KP}.

\begin{itemize}
\item Let first assume that $q\in k^*$ is generic. Then $\mathcal{O}(GL_q(2))$ is cosemisimple and there are two family $(U_{n})_{n\in \Bbb N }$ and $(D^{\otimes e})_{e\in \Bbb Z }$ of non-isomorphic simple comodules (except for $U_0=D^{\otimes 0}=k$) such that ($(n,e), (m,f) \in \mathbb{N}^*\times \Bbb Z$)
\begin{enumerate}[]
 \item $\dim_k(U_{n})=n+1 \text{ and } \dim_k (D)=1$
 \item $(U_{n}\otimes D^{\otimes e}) \otimes (U_{m}\otimes D^{\otimes f}) \cong (U_{m}\otimes D^{\otimes f}) \otimes (U_{n}\otimes D^{\otimes e}) \cong  \underset{i=0}{\overset{min (n,m)}{\bigoplus}} U_{n+m-2i}\otimes D^{\otimes e+f+i}$
\end{enumerate}

Moreover, every simple $\mathcal{O}(GL_q(2))$-comodule is isomorphic to one of the comodules $U_{n}\otimes D^{\otimes e}=:U_{(n,e)}$.
\item Now assume that $q\in k^*$ is not generic. Let $N \geq 3$ be its order. Put $$N_0=\left\{ \begin{array}{ll} 
N \ &\text{ if $N$ is odd}\\
N/2 \ &\text{ if $N$ is even}\\
                                       \end{array} \right.$$
Then there exists three families $(V_n)_{n \in \Bbb N}$, $(U_m)_{1\leq m \leq N_0-1}$ and $(D^{\otimes e})_{e\in \Bbb Z }$ of non-isomorphic simple comodules (except for $V_0=U_0=D^{\otimes 0}=k$) such that ($n\in \Bbb N, m=0,1,\dots, N_0-1$)
\begin{enumerate}[]
\item $\dim_k(V_n)=n+1, \dim_k(U_m)=m+1 \text{ and } \dim(D)=1$,
\item $V_n\otimes V_1 \simeq V_1\otimes V_n \simeq V_{n+1} \oplus \big(V_{n-1}\otimes D^{\otimes N_0}\big)$,
\item $U_m\otimes U_1 \simeq U_1\otimes U_m \simeq U_{m+1} \oplus \big(U_{m-1}\otimes D\big)$
\end{enumerate}

Moreover the comodules $V_n\otimes U_m\otimes D^{\otimes e}$ are simple and every simple $\mathcal{O}(GL_q(2))$-comodule is isomorphic to one of these.

The comodule $U_{N_0-1}\otimes U_1$ is not semisimple. It has a simple filtration $$(0)\subset U_{N_0-2}\otimes D \subset Y \subset U_{N_0-1}\otimes U_1$$ such that $$U_{N_0-1}\otimes U_1/Y\simeq U_{N_0-2}\otimes D \text{  and  } Y/U_{N_0-2}\otimes D \simeq V_1$$

\end{itemize}

Let $A,B \in GL_n(k)$. We denote by $V_n^{AB}$, $U_m^{AB}$ and $D_{AB}$ the simple $\mathcal{G}(A,B)$-comodules corresponding to the simple $\mathcal{O}(GL_q(2))$-comodules $V_n$, $U_m$ and $D$, and sometimes we note $U_{(m,e)}^{AB}= U_m^{AB}\otimes D_{AB}^{\otimes e}$.

The lowing lemma will be very useful.

\begin{lemma}\label{L 2}
Let $A,B\in GL_n(k)$ and let $C,D\in GL_m(k)$ such that $B^tA^tBA=\lambda I_n$, $D^tC^tDC=\lambda I_m$ and $tr(AB^t)=tr(CD^t)$. Let $\Omega : \textrm{Comod}(\mathcal{G}(A,B))\to \textrm{Comod}(\mathcal{G}(C,D))$ be an equivalence of monoidal categories. 

If $\mathcal{G}(A,B)$ et $\mathcal{G}(C,D)$ are cosemisimple, we have either, for $(n,e) \in \Bbb N\times \Bbb Z$:
\begin{itemize}
\item  $$\Omega(U_{(0,1)}^{AB})\simeq U_{(0,1)}^{CD} \ \text{and then} \ \Omega(U_{(n,e)}^{AB})\simeq U_{(n,e)}^{CD} $$
\item or $$ \Omega(U_{(0,1)}^{AB})\simeq U_{(0,-1)}^{CD} \ \text{and then} \ \Omega(U_{(n,e)}^{AB})\simeq U_{(n,-n-e)}^{CD} $$
\end{itemize}

If $\mathcal{G}(A,B)$ et $\mathcal{G}(C,D)$ are not cosemisimple, we have either, for $n\in \Bbb N, e \in \Bbb Z, m \in \{0,\dots, N_0-1\}$:
\begin{itemize}
\item  $\Omega(U_{(0,1)}^{AB})\simeq U_{(0,1)}^{CD}$ and then $$\Omega(V_{n}^{AB})\simeq V_{n}^{CD}   \text{and} \ \Omega(U_{(m,e)}^{AB})\simeq U_{(m,e)}^{CD}$$
\item or $ \Omega(U_{(0,1)}^{AB})\simeq U_{(0,-1)}^{CD}$ and then $$ \Omega(V_{n}^{AB})\simeq V_{n}^{CD}\otimes U_{(0,-n)}^{CD}  \text{ and }  \Omega(U_{(m,e)}^{AB})\simeq U_{(m,-m-e)}^{CD} $$
\end{itemize}
\end{lemma}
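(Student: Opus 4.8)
The plan is to track how an arbitrary monoidal equivalence $\Omega$ must act on the distinguished simple comodules of $\mathcal{G}(A,B)$, using the fusion-ring structure recalled just before the lemma together with the dimension and invertibility constraints that $\Omega$ necessarily preserves. The key observation is that a monoidal equivalence induces a semi-ring isomorphism $\mathcal{R}^+(\mathcal{G}(A,B)) \simeq \mathcal{R}^+(\mathcal{G}(C,D))$ that carries simples to simples and preserves both dimension and the tensor product; so $\Omega$ is essentially rigidified once we pin down its value on the two fundamental generators of the fusion ring, namely the one-dimensional invertible object $D_{AB}$ (or rather $U_{(0,1)}^{AB}$) and the $2$-dimensional object $U_1^{AB}$. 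Since $\Omega$ must send invertible one-dimensional comodules to invertible one-dimensional comodules, and since the group of one-dimensional $\mathcal{G}(C,D)$-comodules is $\{D_{CD}^{\otimes e}\}_{e\in\mathbb{Z}}\cong\mathbb{Z}$, the image $\Omega(U_{(0,1)}^{AB})$ must be $D_{CD}^{\otimes s}$ for some $s$; because $\Omega$ is invertible as a functor, $s$ must generate $\mathbb{Z}$, hence $s=\pm 1$. This dichotomy is exactly the two bullet cases of the lemma.

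**Next I would** treat each sign separately and propagate the constraint through the fusion rules. In the cosemisimple case: the fundamental $2$-dimensional simple $U_{(1,0)}^{AB}=U_1^{AB}$ must map to a $2$-dimensional simple of $\mathcal{G}(C,D)$, and the classification recalled above says every such simple is of the form $U_1^{CD}\otimes D_{CD}^{\otimes f}$ for some $f\in\mathbb{Z}$. The self-duality/tensor relations (in particular that $U_1\otimes U_1\cong U_2\oplus D^{\otimes 1}$, so that the ``determinant'' one-dimensional summand of $U_1^{\otimes 2}$ equals $U_{(0,1)}$) force a compatibility between $f$ and $s$: taking $\Omega$ of the decomposition $U_1^{AB}\otimes U_1^{AB}\cong U_2^{AB}\oplus U_{(0,1)}^{AB}$ and matching the one-dimensional summands gives $U_{(0,1)}^{CD}{}^{\otimes s}\cong D_{CD}^{\otimes(2f+?)}$, which in the $s=+1$ branch pins down $f=0$ and in the $s=-1$ branch pins down $f=-1$. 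Once $\Omega(U_1^{AB})$ is determined one gets $\Omega(U_n^{AB})$ for all $n$ by induction on $n$ using $U_n\otimes U_1\cong U_{n+1}\oplus(U_{n-1}\otimes D)$, and then $\Omega(U_{(n,e)}^{AB})$ for all $e$ by tensoring with the now-known image of $U_{(0,1)}^{AB}$. A short induction recovers exactly the formulas $\Omega(U_{(n,e)}^{AB})\simeq U_{(n,e)}^{CD}$ respectively $\Omega(U_{(n,e)}^{AB})\simeq U_{(n,-n-e)}^{CD}$; the shift $e\mapsto -n-e$ in the second branch is precisely what is needed so that the assignment respects the full fusion table, not merely multiplication by $U_{(0,1)}$.

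**For the non-cosemisimple case** I would argue analogously but keep track of the two distinct highest-weight families $(V_n)$ and $(U_m)$, together with the extra torsion element $D^{\otimes N_0}$ that appears in the $V$-recursion $V_n\otimes V_1\cong V_{n+1}\oplus(V_{n-1}\otimes D^{\otimes N_0})$. Here the non-semisimple indecomposable $U_{N_0-1}\otimes U_1$ and its filtration (recalled above) provide an intrinsic, equivalence-invariant way to distinguish the $U$-family from the $V$-family, so $\Omega$ must carry $U$-type simples to $U$-type and $V$-type to $V$-type. The $s=\pm1$ dichotomy on $\Omega(U_{(0,1)}^{AB})$ is obtained as before; then one propagates through both recursions. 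The appearance of the correcting factor $U_{(0,-n)}^{CD}$ in $\Omega(V_n^{AB})$ in the $s=-1$ branch is again forced by matching the $D^{\otimes N_0}$-twisted terms in the $V$-fusion rule.

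**The main obstacle** I expect is not the sign dichotomy itself but the bookkeeping that fixes the twist $f$ and verifies global consistency: a monoidal equivalence is determined by its action on a generating set of objects only up to checking that the induced assignment is compatible with \emph{every} structure constant of the fusion semi-ring, and one must confirm that the two candidate assignments are the only semi-ring automorphisms compatible with the dimension function and with duals (the antipode/rigidity structure). Concretely, the hypotheses $B^tA^tBA=\lambda I_n$, $D^tC^tDC=\lambda I_m$ and $tr(AB^t)=tr(CD^t)$ guarantee (via Theorem~\ref{T 1}) that both Hopf algebras are monoidally equivalent to the \emph{same} $\mathcal{O}(GL_q(2))$, so the fusion rules genuinely coincide and the matching of structure constants can be carried out verbatim on the $GL_q(2)$ side; the delicate point is ensuring that the duality constraint (which relates $U_1$ to its dual) excludes any further twist and leaves exactly the two listed possibilities.
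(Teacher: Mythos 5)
Your overall strategy is the same as the paper's (pin down $\Omega$ on the one-dimensional generator and on $U_{(1,0)}$ via the fusion rules, then induct), but there is a genuine gap at the key identification step. You assert that the induced semi-ring isomorphism ``preserves dimension'' and conclude that $U_{(1,0)}^{AB}=U_1^{AB}$ is a $2$-dimensional simple mapping to a $2$-dimensional simple of $\mathcal{G}(C,D)$. Both halves of this are false in the present setting: $U_{(1,0)}^{AB}$ is the fundamental comodule of $\mathcal{G}(A,B)$ with $A,B\in GL_n(k)$, so it has dimension $n$, not $2$; and a $k$-linear monoidal equivalence of comodule categories does \emph{not} preserve dimensions of simples --- indeed the whole point of Theorem \ref{T 1} and the cogroupoid construction is that $\mathcal{G}(A,B)$ ($A,B\in GL_n$) and $\mathcal{G}(C,D)$ ($C,D\in GL_m$) can be monoidally equivalent with $n\neq m$, in which case the corresponding fundamental simples have different dimensions (and $\mathcal{G}(C,D)$ may have no $2$-dimensional simple at all). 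Invertibility, which you correctly use for the one-dimensional object, survives a monoidal equivalence; the dimension function does not. The correct replacement, which is what the paper uses, is purely fusion-theoretic: $U_{(1,0)}\otimes U_{(1,0)}\simeq U_{(2,0)}\oplus U_{(0,1)}$, and the $U_{(1,p)}^{CD}$, $p\in\mathbb{Z}$, are the \emph{only} simples of $\mathcal{G}(C,D)$ whose tensor square is a direct sum of exactly two simples; this forces $\Omega(U_{(1,0)}^{AB})\simeq U_{(1,p)}^{CD}$, after which your matching of the one-dimensional summands (giving $2p+1=s(\Omega)$, hence $(p,s(\Omega))\in\{(0,1),(-1,-1)\}$) goes through verbatim.

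Two smaller points. In the non-cosemisimple case your proposed invariant (the failure of semisimplicity coming from the filtration of $U_{N_0-1}\otimes U_1$) is the right idea, but your claim that $\Omega$ ``must carry $U$-type to $U$-type and $V$-type to $V$-type'' needs an actual argument: the paper writes $\Omega(V_1^{AB})\simeq V_n^{CD}\otimes U_{(m,p)}^{CD}$ and $\Omega(U_{(1,0)}^{AB})\simeq V_k^{CD}\otimes U_{(l,t)}^{CD}$, uses simplicity of $V_1^{AB}\otimes U_{(1,0)}^{AB}$ to force $n=l=0$ or $m=k=0$, rules out the first option because $(U_{(1,0)}^{AB})^{\otimes N_0}$ is not semisimple while its putative image would be, and then pins down $\Omega^{-1}(U_{(1,0)}^{CD})$ by a dimension comparison that is itself \emph{derived} from the fusion rules applied to the images (so no dimension preservation is assumed). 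Finally, your closing appeal to duality/rigidity to exclude further twists is not needed: once $s(\Omega)$ and the image of $U_{(1,0)}^{AB}$ are fixed, the recursions $U_n\otimes U_1\simeq U_{n+1}\oplus(U_{n-1}\otimes D)$ and $V_n\otimes V_1\simeq V_{n+1}\oplus(V_{n-1}\otimes D^{\otimes N_0})$ determine everything by induction, exactly as you indicate.
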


\begin{proof}
Assume first that the algebras $\mathcal{G}(A,B)$ and $\mathcal{G}(C,D)$ are cosemisimple.
According to the fusion rule, $U_{(0,1)}\otimes U_{(0,1)}\simeq U_{(0,2)}$, so $\Omega(U_{(0,1)}^{AB})\otimes \Omega(U_{(0,1)}^{AB})$ is simple, i.e. there exists $s(\Omega)\in \Bbb Z$ such that $$\Omega(U_{(0,1)}^{AB})\simeq U_{(0,s(\Omega))}^{CD}$$
Then, we have $$\Omega(U_{(0,e)}^{AB})\simeq U_{(0,s(\Omega)e)}^{CD} \ \forall e\in \Bbb Z$$
Similarly, if $\Lambda : \textrm{Comod}(\mathcal{G}(C,D))\to \textrm{Comod}(\mathcal{G}(A,B))$ is a monoidal quasi-inverse for $\Omega$, we have $$\Lambda(U_{(0,e)}^{CD})\simeq U_{(0,s(\Lambda)e)}^{AB} \ \forall e\in \Bbb Z$$ and in particular $$\Lambda(U_{(0,s(\Omega)}^{CD}))\simeq U_{(0,s(\Lambda)s(\Omega))}^{AB}\simeq U_{(0,1)}^{AB}$$ and then $s(\Omega)\in \{-1, 1\}$.

Next, we have $U_{(1,0)}\otimes U_{(1,0)}\simeq U_{(2,0)}\oplus U_{(0,1)}$, and, by the fusion rules, the only simple $\mathcal{G}(C,D)$-comodules $W$ such that $W\otimes W$ is direct sum of two simple comodules are the $(U_{(1,p)}^{CD})_{p\in \Bbb Z}$. Hence there exists $p\in \Bbb Z$ such that $$\Omega(U_{(1,0)}^{AB})\simeq U_{(1,p)}^{CD}$$ We have: by $U_{(1,0)}\otimes U_{(1,0)}\simeq U_{(2,0)}\oplus U_{(0,1)}$ and since $\Omega$ is monoidal we deduce that $$U_{(1,p)}^{CD}\otimes U_{(1,p)}^{CD} \simeq \Omega(U_{(2,0)}^{AB})\oplus U_{(0,s(\Omega))}^{CD}$$ On the other hand: $$U_{(1,p)}^{CD}\otimes U_{(1,p)}^{CD} \simeq U_{(2,2p)}^{CD}\oplus U_{(0,2p+1)}^{CD}$$
We deduce from the uniqueness of the decomposition into simple comodules that $U_{(0,2p+1)}^{CD}\simeq U_{(0,s(\Omega))}^{CD}$ and then that $$(p,s(\Omega))\in \{(0,1), \ (-1,-1)\}$$

By induction, we get $\forall (n,e) \in \Bbb N\times \Bbb Z$, $\Omega(U_{(n,e)}^{AB})\simeq U_{(n,e)}^{CD}$ if $s(\Omega)=1$ and $\Omega(U_{(n,e)}^{AB})\simeq U_{(n,-n-e)}^{CD}$ if $s(\Omega)=-1$

Consider now the non-cosemisimple case:
in the same way as above, we get $$\Omega(U_{(0,1)}^{AB})\simeq U_{(0,s(\Omega))}^{CD} \text{  with  } s(\Omega)\in \{-1, 1\}$$ Moreover $V^{AB}_1$ is simple, so $\Omega(V^{AB}_1)\simeq V^{CD}_n\otimes U^{CD}_{(m,p)}$ with $n\in \Bbb N, m \in \{0,\dots,N_0-1\}$, $(n,m)\neq (0,0), p\in \Bbb Z$. Similarly we have $\Omega(U^{AB}_{(1,0)})\simeq V^{CD}_k\otimes U^{CD}_{(l,t)}$ with $k\in \Bbb N, l \in \{0,\dots,N_0-1\}$, $(k,l)\neq (0,0), t\in \Bbb Z$. Then we have $$\Omega(V^{AB}_1\otimes U^{AB}_{(1,0)})\simeq V^{CD}_n\otimes U^{CD}_{(m,p)}\otimes V^{CD}_k\otimes U^{CD}_{(l,t)}$$ but since $V^{AB}_1\otimes U^{AB}_{(1,0)}$ is still simple, we must have either $n=l=0$ or $m=k=0$. In the first case, we have $\Omega(U^{AB}_{(1,0)})\simeq V^{CD}_k\otimes U^{CD}_{(0,t)}$. But $(U^{AB}_{(1,0)})^{\otimes N_0}$ is not semisimple whereas $(V^{CD}_k)^{\otimes N_0}\otimes U^{CD}_{(0,t+N_0)}$ is. So we have $m=k=0$ and $$\Omega(V^{AB}_1)\simeq V^{CD}_n \otimes U^{CD}_{(0,p)}$$ By the cosemisimple case, we have $$(p,s(\Omega))\in \{(0,1), \ (-1,-1)\}$$ $\Omega(V_n^{AB})\simeq V_n^{CD}$ or $\Omega(V_n^{AB})\simeq V_n^{CD}\otimes U_{(0,-n)}^{CD}, \forall n\in \Bbb N$

Let $Z$ be a simple $\mathcal{G}(A,B)$-comodule such that $\Omega(Z)\simeq U_{(1,0)}^{CD}$. We have $Z\simeq V^{AB}_n\otimes U^{AB}_{(m,e)}$ and then $$U_{(1,0)}^{CD}\simeq V_n^{CD} \otimes \Omega(U^{AB}_{(m,0)}) \otimes U^{CD}_{(0,p)} \ \ (\text{with} \ \  p\in \{e, -n-e\})$$

By the fusion rules we get the following inequalities: $$\dim (\Omega(U^{AB}_{(1,i_1)}))< \dim (\Omega(U^{AB}_{(2,i_2)}))< \dots <\dim (\Omega(U^{AB}_{(N_0-1,i_{N_0-1})}))$$ and then if $m>1$ we have $\dim (\Omega(U^{AB}_{(m,e)})) < \dim (U^{CD}_{(1,j)})$. On the other side, another glance at the fusion rules shows that the $U^{CD}_{(1,j)}, j\in \Bbb Z$, are the simple comodules (that are not one dimensional) of the smallest dimension. Hence $m=1$ and $Z\simeq U^{AB}_{(1,e)}$. The same arguments as above show us that $(e,s(\Omega))\in \{(0,1), \ (-1,-1)\}$.
\end{proof}

We are now able to complete the proof of Theorem \ref{T 3} and the isomorphic classification of the Hopf algebras $\mathcal{G}(A,B)$.

\begin{proof}[Proof of Theorem \ref{T 3}]
We have already proved that the Hopf algebras $\mathcal{G}(A,B)$, $\mathcal{G}(P^tAP,$ $ P^{-1}BP^{-1t})$ and $\mathcal{G}( Q^tB^{-1}Q, Q^{-1}A^{-1}Q^{-1t})$ are isomorphic, see Proposition \ref{P 2}.

To prove the converse, we denote by $x_{ij} \ (1\leq i,j \leq n), d$ and $y_{ij} \ (1\leq i,j \leq m), d$ the respective generators of $\mathcal{G}(A,B)$ and $\mathcal{G}(C,D)$ and by $x$ and $y$ the corresponding matrices. By construction, the elements $(x_{ij})$ and $(y_{ij})$ are the matrix coefficients of the comodules $U_{(1,0)}^{AB}$ and $U_{(1,0)}^{CD}$, and $d$, $d$ those of $U_{(0,1)}^{AB}$ and $U_{(0,1)}^{CD}$.

Let $f : \mathcal{G}(A,B) \to \mathcal{G}(C,D)$ be a Hopf algebra isomorphism and let $f_* : \textrm{Comod}(\mathcal{G}(A,B)) \to \textrm{Comod}(\mathcal{G}(C,D))$ be the induced equivalence of monoidal categories. 

According to lemma \ref{L 2} and its proof, there are two cases:

 If $f_*(U_{(0,1)}^{AB})\simeq U_{(0,1)}^{CD}$ (i.e. if $f(d)=d$) then $f_*(U_{(1,0)}^{AB})\simeq U_{(1,0)}^{CD}$. 

 In this case, $n=m$ and there exists $P\in GL_n(k)$ such that $f(x)=PyP^{-1}$. Moreover we must have $f(d^{-1}A^{-1}x^tAx)=I_n$ and then $y^{-1}=d^{-1}(P^tAP)^{-1}y^t(P^tAP)$. But we already have $y^{-1}=S(y)=d^{-1}C^{-1}y^tC$. Since the elements $x_{ij}$ are linearly independent, there exists $\lambda \in k^*$ such that $C=\lambda P^tAP$.

Similar computations on the relation $xBx^t=Bd$, using the relations $xd(DC)=(DC)dx$ and $x^td(CD)=(CD)dx^t$, lead to $D=\mu P^{-1t}BP^{-1}, \mu \in k^*$.
Since $\mathcal{G}(A,B)=\mathcal{G}(\alpha A, \beta B)$ for all $\alpha, \beta \in k^*$, we can drop $\lambda$ and $\mu$.


 If $f_*(U_{(0,1)}^{AB})\simeq U_{(0,-1)}^{CD}$ (i.e. if $f(d)=d^{-1}$), then $f_*(U_{(1,0)}^{AB})\simeq U_{(1,-1)}^{CD}$.

  In this case, $m=n$ and there exists $M\in GL_n(k)$ such that $f(x)=Myd^{-1}M^{-1}$. 
Similar computations lead to $C=\lambda P^tB^{-1}P$ and $D=\mu P^{-1}A^{-1}P^{-1t}$, for some $\lambda, \mu \in k^*$.

\end{proof}
We are now ready to prove Theorem \ref{T 2}. 

\begin{proof}[Proof of Theorem \ref{T 2}]

First, for matrices $A,B\in GL_n(k)$ ($n\geq 2$) satisfying the conditions of the theorem, Theorem \ref{T 1} ensures that the Hopf algebra $\mathcal{G}(A,B)$ is indeed a $GL(2)$-deformation.

Let $H$ be a Hopf algebra whose corepresentation semi-ring is isomorphic to that of $GL_2(k)$. We denote by $U_{(n,e)}^H,  (n,e)\in \Bbb N \times \Bbb Z$ the simple $H$-comodules (with the same convention as above). From the morphisms $$U_{(1,0)}^H\otimes U_{(1,0)}^H \to U_{(0,1)}^H \text{  and  } U_{(0,1)}^H\to U_{(1,0)}^H\otimes U_{(1,0)}^H$$  we deduce the existence of two matrices $A,B\in GL_n(k)$ ($n=\dim U_{(1,0)}^H$) and of a Hopf algebra morphism $$f: \mathcal{G}(A,B)\to H$$ such that $f_*(U_{(0,1)}^{AB})=U_{(0,1)}^H$ and $f_*(U_{(1,0)}^{AB})=U_{(1,0)}^H$ and by Lemma \ref{L Schur} there exists $\lambda \in k^*$ such that $B^tA^tBA=\lambda I_n$ for some $\lambda \in k^*$. By Theorem \ref{T 1},  there is a $k$-linear equivalence of monoidal categories $$\textrm{Comod}(\mathcal{G}(A,B))\simeq^{\otimes}\textrm{Comod}(\mathcal{O}(GL_q(2))$$ between the comodule categories of $\mathcal{G}(A,B)$ and $\mathcal{O}(GL_q(2))$ respectively, with $q\in k^*$ such that $tr(AB^t)=\sqrt{\lambda}(q+q^{-1})$.

First assume that $\mathcal{G}(A,B)$ is cosemisimple. Using lemma \ref{L 2}, we get that $f_*(U_{(n,e)}^{AB})=U_{(n,e)}^{H}, \forall \ (n,e)\in \Bbb N \times \Bbb Z$, so $f$ induces a semi-rings isomorphism $\mathcal{R}^+(\mathcal{G}(A,B))\simeq \mathcal{R}^+(H)$, and then by Tannaka-Krein reconstruction theorem (see e.g. \cite{JS}) $f: \mathcal{G}(A,B)\to H$ is a Hopf algebra isomorphism.

Now assume that $\mathcal{G}(A,B)$ is not cosemisimple. For $(n,e)\in \{0,\dots, N_0-1\} \times \Bbb Z$, we have $f_*(U_{(n,e)}^{AB})=U_{(n,e)}^{H}$. So we get:
$$f_*(U_{(N_0-1,0)}^{AB}\otimes U_{(1,0)}^{AB})\simeq U_{(N_0,0)}^H \oplus U^H_{(N_0-2,1)}$$
but on the other hand, using the simple filtration, we have:
$$f_*(U_{(N_0-1,0)}^{AB}\otimes U_{(1,0)}^{AB})\simeq U^H_{(N_0-2,1)}\oplus f_*(V_1) \oplus U^H_{(N_0-2,1)}$$ This contradicts the uniqueness of the decomposition of a semisimple comodule into a direct sum of simple comodules.

Thus $\mathcal{G}(A,B)$ is cosemisimple, $q$ is generic and $f$ is an isomorphism.
\end{proof}

Lemma \ref{L 2} and the results of Section 3 gives us a Morita equivalence criterion which, in the particular case of $\mathcal{O}(GL_{p,q}(2))$, gives Theorem 2.6 in \cite{Tak97}, at the Hopf algebra level.

\begin{cor}\label{Cor1}
Let $A,B \in GL_n(k)$, $C,D\in GL_m(k)$ such that $B^tA^tBA=\lambda_{A,B}I_n$ and $D^tC^tDC=\lambda_{C,D}I_m$. Put $\mu_{A,B}:=\emph{tr}(AB^t)$ and $\mu_{C,D}:=\emph{tr}(CD^t)$. The following assertions are equivalent:
\begin{enumerate}
 \item There exists a $k$-linear equivalence of monoidal categories $$\emph{\textrm{Comod}}(\mathcal{G}(A,B))\simeq^{\otimes}\emph{\textrm{Comod}}(\mathcal{G}(C,D))$$ between the comodule categories of $\mathcal{G}(A,B)$ and $\mathcal{G}(C,D)$ respectively.
 \item We have $$\lambda_{A,B}^{-1} \mu_{A,B}^2 = \lambda_{C,D}^{-1} \mu_{C,D}^2 $$ 
\end{enumerate}

\end{cor}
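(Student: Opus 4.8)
The plan is to deduce both implications from the fact that the scalar $\lambda_{A,B}^{-1}\mu_{A,B}^2$ is a monoidal invariant. Write $D_{AB}=U_{(0,1)}^{AB}$ for the one-dimensional comodule and let $a\colon U_{(1,0)}^{AB}\otimes U_{(1,0)}^{AB}\to D_{AB}$ and $b\colon D_{AB}\to U_{(1,0)}^{AB}\otimes U_{(1,0)}^{AB}$ be the structural morphisms attached to $A$ and $B$ (given by $a(v_i\otimes v_j)=A_{ij}d$ and $b(d)=\sum_{ij}B_{ij}\,v_i\otimes v_j$). Two direct computations underpin everything: $a\circ b=\mathrm{tr}(AB^t)\,\mathrm{id}_{D_{AB}}=\mu_{A,B}\,\mathrm{id}$, while Lemma~\ref{L Schur} gives $(a\otimes\mathrm{id})\circ(\mathrm{id}\otimes b)\circ(\mathrm{id}\otimes a)\circ(b\otimes\mathrm{id})=\lambda_{A,B}\,\mathrm{id}$. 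Since the Hom-spaces $\mathrm{Hom}\big((U_{(1,0)}^{AB})^{\otimes2},D_{AB}\big)$ and $\mathrm{Hom}\big(D_{AB},(U_{(1,0)}^{AB})^{\otimes2}\big)$ are one-dimensional (by the $GL(2)$ fusion rules), and since replacing $(a,b)$ by $(sa,tb)$ multiplies the first scalar by $st$ and the second by $s^2t^2$, the quantity $\mu_{A,B}^2/\lambda_{A,B}=\lambda_{A,B}^{-1}\mu_{A,B}^2$ is intrinsic to the monoidal category equipped with the objects $U_{(1,0)}^{AB}$ and $D_{AB}$.

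For $(2)\Rightarrow(1)$ I would use that $\mathcal{G}(A,B)=\mathcal{G}(\alpha A,\beta B)$ for all $\alpha,\beta\in k^*$, a rescaling which turns $(\lambda_{A,B},\mu_{A,B})$ into $(\alpha^2\beta^2\lambda_{A,B},\alpha\beta\mu_{A,B})$ and hence leaves $\lambda^{-1}\mu^2$ fixed. Setting $s=\alpha\beta$ and $t=\gamma\delta$ for rescalings of $(A,B)$ and $(C,D)$, I would solve $s^2\lambda_{A,B}=t^2\lambda_{C,D}$ and $s\mu_{A,B}=t\mu_{C,D}$ in $k^*$; a short verification shows this is possible exactly when $\lambda_{A,B}^{-1}\mu_{A,B}^2=\lambda_{C,D}^{-1}\mu_{C,D}^2$ (the case $\mu_{A,B}=\mu_{C,D}=0$ being handled directly). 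After rescaling, both pairs lie in a single full subcogroupoid $\mathcal{G}^{\lambda,\mu}$, which is connected by the corollary to Lemma~\ref{L 1}, and the monoidal equivalence of Section~3 between objects of a connected cogroupoid yields (1).

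For $(1)\Rightarrow(2)$, let $\Omega$ be a monoidal equivalence. By Theorem~\ref{T 1} both categories carry the $GL(2)$ fusion rules, and $\Omega$ identifies their fusion semirings; hence the purely fusion-theoretic computations in the proof of Lemma~\ref{L 2} apply and show that $s(\Omega)\in\{-1,1\}$, that $\Omega(D_{AB})\simeq D_{CD}$ when $s(\Omega)=1$ and $\Omega(D_{AB})\simeq D_{CD}^{-1}$ when $s(\Omega)=-1$, and correspondingly $\Omega(U_{(1,0)}^{AB})\simeq U_{(1,0)}^{CD}$ or $\Omega(U_{(1,0)}^{AB})\simeq U_{(1,0)}^{CD}\otimes D_{CD}^{-1}$. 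Applying $\Omega$ to $a$ and $b$ gives nonzero morphisms between the corresponding objects; since $\Omega$ preserves composition and sends each scalar endomorphism $c\,\mathrm{id}$ of a simple object to $c\,\mathrm{id}$, the invariant read off from $\big(\Omega(U_{(1,0)}^{AB}),\Omega(D_{AB})\big)$ equals $\lambda_{A,B}^{-1}\mu_{A,B}^2$. The final step is to check that this invariant is unchanged under the twist $U_{(1,0)}^{CD}\mapsto U_{(1,0)}^{CD}\otimes L$, $D_{CD}\mapsto D_{CD}\otimes L^{\otimes2}$ by the invertible object $L=D_{CD}^{-1}$ that appears when $s(\Omega)=-1$; tensoring each structural map by the suitable identity on a power of $L$ leaves both scalars $\mu$ and $\lambda$ untouched, so the invariant of the twisted objects coincides with that of $(U_{(1,0)}^{CD},D_{CD})$, namely $\lambda_{C,D}^{-1}\mu_{C,D}^2$. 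Comparing gives (2).

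The main obstacle is the sign $s(\Omega)=-1$ in the second implication: there $\Omega$ carries the fundamental object not to $U_{(1,0)}^{CD}$ but to its determinant-twist $U_{(1,-1)}^{CD}$, so one cannot transport $a$ and $b$ and directly compare scalars. The decisive point is therefore the twist-invariance of $\lambda^{-1}\mu^2$: once one observes that this combination is insensitive both to rescaling $a,b$ and to tensoring the pair $(U_{(1,0)},D)$ by an invertible one-dimensional object, both signs are treated uniformly and the equivalence transports the invariant intact.
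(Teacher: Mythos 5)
Your overall strategy coincides with the paper's: the direction $(2)\Rightarrow(1)$ by rescaling $(A,B)$ and $(C,D)$ into a common connected subcogroupoid $\mathcal{G}^{\lambda,\mu}$ (equivalently, via $\mathcal{O}(GL_q(2))$ with $q^2-\sqrt{\kappa}q+1=0$) is exactly what the paper does, and your treatment of $(1)\Rightarrow(2)$ when $\Omega(D_{AB})\simeq D_{CD}$ — transport $a$ and $b$, use one-dimensionality of the Hom-spaces to get scalars $\alpha,\beta$, and observe that $\mu$ scales by $\alpha\beta$ while $\lambda$ scales by $\alpha^2\beta^2$ — is precisely the paper's computation. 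The divergence is in the case $s(\Omega)=-1$, and that is where your argument has a soft spot.

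Your claim there is that the invariant of the twisted pair $(U_{(1,0)}^{CD}\otimes L,\,D_{CD}\otimes L^{\otimes 2})$ with $L=D_{CD}^{-1}$ equals that of $(U_{(1,0)}^{CD},D_{CD})$ because one can ``tensor each structural map by the suitable identity on a power of $L$.'' This is not literally possible: $(V\otimes L)^{\otimes 2}$ is not $V^{\otimes 2}\otimes L^{\otimes 2}$ on the nose, the category is not given as braided, and to produce structural maps for the twisted pair you must insert a commutation isomorphism $\sigma\colon L\otimes V\to V\otimes L$ (unique up to scalar). For $\mu$ the two occurrences of $\sigma$ cancel immediately, but in the fourfold composite defining $\lambda$ the insertions of $\sigma^{\pm 1}$ act on overlapping tensor positions and do not cancel by a formal interchange argument; showing that the resulting scalar is still $\lambda_{C,D}$ is a genuine verification that your sketch omits. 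The conclusion is true, but as written the decisive step is asserted rather than proved. The paper avoids this diagram chase: it composes $\Omega$ with the functor induced by the Hopf algebra isomorphism $\mathcal{G}(C,D)\simeq\mathcal{G}(D^{-1},C^{-1})$ of Proposition \ref{P 2} (which realizes exactly your twist, since the fundamental comodule of $\mathcal{G}(D^{-1},C^{-1})$ pulls back to $U_{(1,-1)}^{CD}$ and its group-like to $D_{CD}^{-1}$), thereby reducing to the case already treated, and then checks by a direct matrix computation that $\lambda_{D^{-1},C^{-1}}^{-1}\mu_{D^{-1},C^{-1}}^2=\lambda_{C,D}^{-1}\mu_{C,D}^2$ — indeed $D^tC^tDC=\lambda I$ gives $CD^t=\lambda\, D^{-1}C^{-1t}$, hence $\mu_{D^{-1},C^{-1}}=\lambda_{C,D}^{-1}\mu_{C,D}$ and $\lambda_{D^{-1},C^{-1}}=\lambda_{C,D}^{-1}$. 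You should either carry out the cancellation of the $\sigma$'s explicitly or replace that step by this reduction.
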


\begin{proof}
First, put $\kappa:=\lambda_{A,B}^{-1} \mu_{A,B}^2 = \lambda_{C,D}^{-1} \mu_{C,D}^2$ and let $q\in k^*$ such that $q^2-\sqrt{\kappa}q+1=0$. Then by Theorem \ref{T 1} and its proof, we have two $k$-linear equivalences of  monoidal categories
$$\textrm{Comod}(\mathcal{G}(A,B))\simeq^{\otimes}\textrm{Comod}(\mathcal{O}(GL_q(2)))\simeq^{\otimes}\textrm{Comod}(\mathcal{G}(C,D))$$

 For the other implication, first assume that the $k$-linear monoidal functor $\Omega : \textrm{Comod}(\mathcal{G}(A,B)) \to \textrm{Comod}(\mathcal{G}(C,D))$ satisfies $\Omega (D_{AB}) \simeq D_{CD}$. Let $(v_i^{AB})_{1 \leq i \leq n}$, $d_{AB}$ and $(v_i^{CD})_{1 \leq i \leq m}$, $d_{CD}$ be some bases of $V_{AB}$, $D_{AB}$ and $V_{CD}$, $D_{CD}$ respectively such that the fundamental colinear maps 
$$\begin{aligned}
 &a:V_{AB}\otimes V_{AB} \to D_{AB} \ \ \ \ \ c:V_{CD}\otimes V_{CD} \to D_{CD} \\
 &b: D_{AB} \to V_{AB}\otimes V_{AB} \ \ \ \ \ d: D_{CD} \to V_{CD}\otimes V_{CD}
\end{aligned}$$
satisfy $$a(v^{AB}_i\otimes v^{AB}_j)=A_{ij}d_{AB}, \ b(d_{AB})=\sum_{i,j=1}^nB_{ij}v^{AB}_i\otimes v^{AB}_j$$ and $$c(v^{CD}_i\otimes v^{CD}_j)=C_{ij}d_{CD}, \ d(d_{CD})=\sum_{i,j=1}^nD_{ij}v^{CD}_i\otimes v^{CD}_j$$

Since $\Omega$ is monoidal, let $c'$ and $d'$ be the colinear map given by the compositions 
$$\xymatrix{
c':\Omega(V_{AB})\otimes \Omega(V_{AB}) \ar[r]^{\ \ \ \ \sim} \ar@{=}[d]& \Omega(V_{AB}\otimes V_{AB}) \ar[r]^{\ \ \Omega(a)} \ar@{=}[d]& \Omega(D_{AB}) \ar@{=}[d]\\
V_{CD} \otimes V_{CD} \ar[r]& V_{CD} \otimes V_{CD}  \ar[r]& D_{CD}}$$

and 
$$\xymatrix{
d':\Omega(D_{AB}) \ar[r]^{\Omega(b) \  } \ar@{=}[d]& \Omega(V_{AB}\otimes V_{AB}) \ar[r]^{\sim \ \ \ } \ar@{=}[d]& \Omega(V_{AB})\otimes \Omega(V_{AB}) \ar@{=}[d]\\
D_{CD} \ar[r]& V_{CD} \otimes V_{CD}  \ar[r]& V_{CD} \otimes V_{CD}}$$

Then there exists $\alpha, \beta \in k^*$ such that $c'=\alpha c$ and $d'=\beta d$. Since $\Omega$ is $k$-linear, we can compute the colinear map given by the compositions $$D_{CD} \to V_{CD}\otimes V_{CD} \to D_{CD}$$ and $$V_{CD}\otimes D_{CD} \to V_{CD}^{\otimes 3} \to D_{CD}\otimes V_{CD} \to V_{CD}^{\otimes 3} \to V_{CD}\otimes D_{CD} $$ We obtain that $$ \mu_{A,B}=\alpha \beta \mu_{C,D} \text{ and } \lambda_{A,B}= \alpha^2\beta^2 \lambda_{C,D}$$ and then we have the expected equality $$\lambda_{A,B}^{-1} \mu_{A,B}^2 = \lambda_{C,D}^{-1} \mu_{C,D}^2 $$

Finally, if the $k$-linear monoidal functor $\Omega : \textrm{Comod}(\mathcal{G}(A,B)) \to \textrm{Comod}(\mathcal{G}(C,D))$ satisfy $\Omega (D_{AB}) \simeq D^{-1}_{CD}$, compose it with the functor induced by the isomorphism $\mathcal{G}(C,D)\simeq \mathcal{G}(D^{-1},C^{-1})$. We get an equivalence of monoidal categories $$\tilde \Omega : \textrm{Comod}(\mathcal{G}(A,B)) \to \textrm{Comod}(\mathcal{G}(D^{-1},C^{-1}))$$ satisfy $\Omega (D_{AB}) \simeq D_{D^{-1},C^{-1}}$, and then $$\lambda_{A,B}^{-1} \mu_{A,B}^2 = \lambda_{C,D}^{-1} \mu_{C,D}^2=\lambda_{D^{-1},C^{-1}}^{-1} \mu_{D^{-1},C^{-1}}^2$$
\end{proof}

In particular, we get another proof of Theorem 2.6 in \cite{Tak97}
Recall that $\mathcal{O}(GL_{p,q}(2))=\mathcal{G}(A_p, A_q)$ with $\lambda_{A_p,A_q}=pq$ and $\mu_{A_p,A_q}=1+pq$.

\begin{cor}
 Let $p,q$ and $p',q'\in k^*$. The following assertions are equivalent:
 \begin{enumerate}
 \item The Hopf algebras $\mathcal{O}(GL_{p,q}(2))$ and $\mathcal{O}(GL_{p',q'}(2))$ are cocycle deformations of each others
 \item We have $$pq=p'q' \text{  or  } pq=(p'q')^{-1}$$
\end{enumerate}
\end{cor}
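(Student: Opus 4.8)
The plan is to read this off from Corollary \ref{Cor1} via the standard dictionary between cocycle deformations and (cleft) bi-Galois objects, and then to simplify the resulting numerical condition. Recall that $L$ is a cocycle deformation of $H$ if and only if there is a cleft $H$--$L$--bi-Galois object, and that by Schauenburg \cite{Sch1} any bi-Galois object induces a $k$-linear monoidal equivalence $\mathrm{Comod}(H)\simeq^{\otimes}\mathrm{Comod}(L)$. Hence if $\mathcal{O}(GL_{p,q}(2))$ and $\mathcal{O}(GL_{p',q'}(2))$ are cocycle deformations of each other, their comodule categories are monoidally equivalent and Corollary \ref{Cor1} yields the numerical condition; this settles the implication (1)$\Rightarrow$(2).

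For (2)$\Rightarrow$(1) the delicate point is to upgrade the monoidal equivalence produced by Corollary \ref{Cor1} into an actual cocycle deformation, i.e. to verify that the relevant bi-Galois object is cleft. Here I would use that both Hopf algebras have a two-dimensional fundamental comodule, so that $n=m=2$ and the equivalence $\Omega$ is covered by Lemma \ref{L 2}: in the cosemisimple case $\Omega$ sends each simple comodule $U_{(n,e)}$ to a simple comodule of the same dimension $n+1$. Thus $\Omega$ preserves the dimensions of all simple objects, and since a fibre functor on a cosemisimple category is determined up to isomorphism by the dimensions it assigns to the simples, the composite $\mathrm{Comod}(\mathcal{O}(GL_{p,q}(2)))\xrightarrow{\Omega}\mathrm{Comod}(\mathcal{O}(GL_{p',q'}(2)))\to \mathrm{Vec}$ is isomorphic to the standard fibre functor. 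By Schauenburg's characterisation this means the associated bi-Galois object is cleft, so the two Hopf algebras are genuinely cocycle deformations of each other. I expect this cleftness verification to be the main obstacle, since for a general $\mathcal{G}(A,B)$ the equivalences of Theorem \ref{T 1} are \emph{not} dimension-preserving (the $n$-dimensional fundamental comodule is sent to a $2$-dimensional one) and produce only a monoidal equivalence; it is precisely the constraint $n=2$ that rescues cleftness here.

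Finally I would rewrite the criterion of Corollary \ref{Cor1}. Using $\lambda_{A_p,A_q}=pq$ and $\mu_{A_p,A_q}=1+pq$ and setting $t=pq$, $t'=p'q'$, we have
\[
\lambda_{A_p,A_q}^{-1}\mu_{A_p,A_q}^2=\frac{(1+t)^2}{t}=t+t^{-1}+2 ,
\]
so the equality $\lambda_{A_p,A_q}^{-1}\mu_{A_p,A_q}^2=\lambda_{A_{p'},A_{q'}}^{-1}\mu_{A_{p'},A_{q'}}^2$ amounts to $t+t^{-1}=t'+t'^{-1}$, that is $(t-t')\,(1-(tt')^{-1})=0$. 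This vanishes exactly when $t=t'$ or $tt'=1$, i.e. when $pq=p'q'$ or $pq=(p'q')^{-1}$, which is assertion (2). Combined with the equivalence between (1) and the existence of a monoidal equivalence established in the first two paragraphs, this proves the corollary.
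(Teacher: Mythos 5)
Your proposal is correct and follows essentially the same route as the paper: the implication $(1)\Rightarrow(2)$ is read off from Corollary \ref{Cor1} together with the elementary observation that $\lambda^{-1}\mu^2=t+t^{-1}+2$ forces $t=t'$ or $tt'=1$, and $(2)\Rightarrow(1)$ is obtained by noting that the fibre functor coming from the cogroupoid preserves dimensions (both fundamental comodules being two-dimensional), which yields cleftness of the bi-Galois object and hence a genuine cocycle deformation. The only differences are cosmetic: the paper justifies the last step by citing Etingof--Gelaki and Montgomery rather than the normal-basis characterisation of cleftness, and phrases the numerical step via the polynomial $X^2-\Theta X+1$, but the underlying arguments coincide.
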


\begin{proof} 
 Assume that $\mathcal{O}(GL_{p,q}(2))$ is a cocycle deformation of $\mathcal{O}(GL_{p',q'}(2))$, then $$Comod (\mathcal{O}(GL_{p,q}(2)))\simeq^{\otimes} Comod(\mathcal{O}(GL_{p',q'}(2)))$$ and according to Corollary \ref{Cor1}, $(pq)^{-1}(1+pq)^2=(p'q')^{-1}(1+p'q')^2$. Then $pq$ and $p'q'$ are roots of the polynomial $P(x)=X^2-\Theta X+1$ where $\Theta=(pq)^{-1}+pq=(p'q')^{-1}+p'q'$. It is easy to see that if $x$ is a root of $P$, then the other root is $x^{-1}$. Then $pq=p'q' \text{  or  } pq=(p'q')^{-1}$.

 Conversely, if $pq=p'q'\text{ or } pq=(p'q')^{-1}$ then $\textrm{Comod}(\mathcal{O}(GL_{p,q}(2)))\simeq^{\otimes} \textrm{Comod}(\mathcal{O}(GL_{p',q'}(2)))$ and the fibre functor $\Omega : \textrm{Comod} (\mathcal{G}(A_p, A_q)) \to \textrm{Comod} (\mathcal{G}(A_{p'}, A_{q'})$ induced by $\mathcal{G}(A_p, A_q| A_{p'}, A_{q'})$ preserves the dimensions of the underlying vector space. Then, according to Proposition 4.2.2 in  \cite{EtGel01}, $$\mathcal{G}(A_{p'}, A_{q'})\simeq \mathcal{G}(A_{p}, A_{q}) \text{  as coalgebras}$$ and there exists (see Theorem 7.2.2 in \cite{mon}) a $2$-cocycle $\sigma : H\otimes H \to k$ such that $$\mathcal{G}(A_{p'}, A_{q'})\simeq \mathcal{G}(A_{p}, A_{q})_{\sigma} \text{  as Hopf algebras.}$$

\end{proof}

\section{Hopf-Galois objects over $\mathcal{G}(A,B)$}


In this section, we use the previous constructions and results to classify the Galois and bi-Galois objects over $\mathcal{G}(A,B)$.

Let us first recall two results on Galois objects and fibre functors. The first one is well-known, and the second one is due to Schneider (see \cite{Sch90}, \cite{Bi2}).

\begin{lemma}\label{L3}
Let $H$ be a Hopf algebra and let $F:\emph{\textrm{Comod}}(H) \to\emph{ \textrm{Vect(k)}}$ be a monoidal functor. If $V$ is a finite-dimensional $H$-comodule, then $F(V)$ is a finite dimensional vector space. Moreover we have $\emph{\textrm{dim}}(V)=1\Rightarrow \emph{\textrm{dim}}(F(V))=1$, and if $F$ is a fibre functor then $\emph{\textrm{dim}}(F(V))=1\Rightarrow\emph{\textrm{dim}}(V)=1$.
\end{lemma}

\begin{lemma}\label{L4}
Let $H$ be a Hopf algebra and let $A,B$ some $H$-Galois objects. Any $H$-colinear algebra map $f:A\to B$ is an isomorphism.
\end{lemma}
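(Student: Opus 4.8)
The plan is to recognize $B$ as a relative Hopf module over the $H$-Galois object $A$ and then to invoke the structure theorem for faithfully flat Hopf-Galois extensions, which forces the resulting comparison map — which will turn out to be $f$ itself — to be an isomorphism.

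First I would record the hypotheses in usable form: by definition $A$ and $B$ are $H$-comodule algebras with $A^{\mathrm{co}H}=k=B^{\mathrm{co}H}$ whose canonical maps $\mathrm{can}_A\colon A\otimes A\to A\otimes H$ and $\mathrm{can}_B\colon B\otimes B\to B\otimes H$, $x\otimes y\mapsto xy_{(0)}\otimes y_{(1)}$, are bijective. Since $A\neq 0$ and $k$ is a field, $A$ is automatically faithfully flat over $k$, so $A$ is a faithfully flat $H$-Galois extension of $k$.

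Next I would turn $B$ into a left $A$-module via $f$, setting $a\cdot b:=f(a)b$; this is well defined because $f$ is an algebra map. A direct check using that $f$ is $H$-colinear and multiplicative shows that the coaction of $B$ is compatible with this action, i.e. $\rho_B(a\cdot b)=a_{(0)}\cdot b_{(0)}\otimes a_{(1)}b_{(1)}$, so that $B$ is a relative $(A,H)$-Hopf module. At this point the structure theorem for the faithfully flat Galois extension $A/k$ (Schneider, see \cite{Sch90}) applies: the multiplication map $A\otimes_k B^{\mathrm{co}H}\to B$, $a\otimes n\mapsto a\cdot n=f(a)n$, is an isomorphism of $(A,H)$-Hopf modules.

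The decisive step is then to identify this comparison map with $f$. Since $B$ is an $H$-Galois object we have $B^{\mathrm{co}H}=k\cdot 1_B$, so $A\otimes_k B^{\mathrm{co}H}\cong A$ and under this identification the multiplication map sends $a\mapsto f(a)$; thus $f$ is exactly the structure isomorphism and is therefore bijective. The only real subtlety — the part I would treat most carefully — is the verification that $B$ is genuinely a relative Hopf module (the Hopf-module compatibility is precisely where the algebra and comodule-morphism properties of $f$ are both used) together with the correct invocation of the structure theorem; everything else is formal. I would also remark that one can try to argue instead from the commutative square relating $\mathrm{can}_A$ and $\mathrm{can}_B$ through $f\otimes f$ and $f\otimes\mathrm{id}_H$, but that square alone merely expresses $f\otimes\mathrm{id}_H$ as a conjugate of $f\otimes f$ and yields no bijectivity on its own, so the Hopf-module argument (equivalently, faithfully flat descent) seems to be the essential input.
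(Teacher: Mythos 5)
Your proof is correct. Note that the paper itself gives no argument for this lemma: it simply records it as a known result of Schneider (citing \cite{Sch90} and \cite{Bi2}). What you have written is precisely the standard proof underlying that citation — view $B$ as a relative $(A,H)$-Hopf module via $f$, invoke Schneider's structure theorem for the faithfully flat Galois extension $A/k$ (faithful flatness being automatic over a field since $A\neq 0$), and identify the comparison isomorphism $A\otimes_k B^{\mathrm{co}H}\to B$ with $f$ using $B^{\mathrm{co}H}=k\cdot 1_B$. All the steps, including the Hopf-module compatibility check where the colinearity and multiplicativity of $f$ both enter, are accurate, and your closing remark that the commutative square of canonical maps alone does not suffice is also well taken.
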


By work of Ulbrich \cite{Ul}, to any $H$-Galois objects $A$ corresponds a fibre functor $\Omega_A:\textrm{Comod}_f(H)\to \textrm{Vect}_f(k)$. The idea of the classification (which follows \cite{Aub}) is to study how this fibre functor will transform the fundamental morphisms of the category of comodules.

\begin{theo}\label{T4}
Let $A,B\in GL_n(k)$ ($n\geq 2$), such that $B^tA^tBA=\lambda I_n$ for $\lambda \in k^*$, and let $Z$ be a left $\mathcal{G}(A,B)$-Galois object. Then there exists $m\in \Bbb N^*$, $m\geq 2$, and two matrices $C,D\in GL_m(k)$ satisfying $D^tC^tDC=\lambda I_m$ and $tr(AB^t)=tr(CD^t)$ such that $Z\simeq \mathcal{G}(A,B|C,D)$ as Galois objects.
\end{theo}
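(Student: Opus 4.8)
The plan is to use Ulbrich's correspondence between $\mathcal{G}(A,B)$-Galois objects and fibre functors on $\textrm{Comod}_f(\mathcal{G}(A,B))$, and to read off the matrices $C,D$ from the way the associated fibre functor transforms the fundamental morphisms $a\colon V\otimes V\to D$ and $b\colon D\to V\otimes V$. Let $Z$ be a left $\mathcal{G}(A,B)$-Galois object and let $\Omega=\Omega_Z\colon \textrm{Comod}_f(\mathcal{G}(A,B))\to \textrm{Vect}_f(k)$ be the corresponding fibre functor. First I would set $W:=\Omega(V_{AB})$ and $E:=\Omega(D_{AB})$, where $V_{AB}$ is the fundamental $n$-dimensional comodule and $D_{AB}$ the one-dimensional comodule attached to the grouplike $d$. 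By Lemma \ref{L3}, since $\dim_k D_{AB}=1$ and $\Omega$ is a fibre functor, $E$ is one-dimensional, and $m:=\dim_k W$ is some positive integer; I would record $m\geq 2$ separately at the end. Because $\Omega$ is monoidal, applying it to $a$ and $b$ yields $k$-linear maps $\Omega(a)\colon W\otimes W\to E$ and $\Omega(b)\colon E\to W\otimes W$, whose underlying bilinear forms (after fixing bases of $W$ and $E$) are encoded by two matrices $C\in M_m(k)$ and $D\in M_m(k)$.

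Next I would verify that $C$ and $D$ are invertible and satisfy the required relations. The key point is that $\Omega$, being monoidal and $k$-linear, preserves every identity between the fundamental morphisms that holds in the category. In particular, the non-degeneracy of $a$ and $b$ is witnessed by the "snake" compositions $(a\otimes\textrm{id})\circ(\textrm{id}\otimes b)$ and $(\textrm{id}\otimes a)\circ(b\otimes\textrm{id})$ being (nonzero) scalar multiples of identities; applying $\Omega$ shows the analogous compositions for $\Omega(a),\Omega(b)$ are nonzero scalars times identities, forcing $C,D\in GL_m(k)$. Likewise, the Schur-type relation of Lemma \ref{L Schur}, namely $(a\otimes\textrm{id})\circ(\textrm{id}\otimes b)\circ(\textrm{id}\otimes a)\circ(b\otimes\textrm{id})=\lambda\,\textrm{id}$, is a morphism identity in $\textrm{Comod}(\mathcal{G}(A,B))$, so $\Omega$ carries it to the same identity with the same scalar $\lambda$; by the matrix translation of that lemma this reads $D^tC^tDC=\lambda I_m$. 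Finally, the scalar $\textrm{tr}(AB^t)$ arises as the value of the closed-loop composition $D_{AB}\to V\otimes V\to D_{AB}$ (or the corresponding endomorphism of $D\otimes V$), which is again a categorical datum preserved by $\Omega$, giving $\textrm{tr}(CD^t)=\textrm{tr}(AB^t)$.

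With $C,D$ thus produced, the universal property of $\mathcal{G}(A,B|C,D)$ (the Galois-object analogue, as in the comodule-morphism universal property of Section 2) yields a $\mathcal{G}(A,B)$-colinear algebra map $\varphi\colon \mathcal{G}(A,B|C,D)\to Z$: indeed $W$ carries the fundamental coaction and the maps $\Omega(a),\Omega(b)$ satisfy exactly the defining relations $x^tAx=Cd$ and $xDx^t=Bd$ of $\mathcal{G}(A,B|C,D)$, so the universal property furnishes $\varphi$. Since $\mathcal{G}(A,B|C,D)$ is a $\mathcal{G}(A,B)$-Galois object (it is one of the bi-Galois objects of the connected cogroupoid $\mathcal{G}^{\lambda,\mu}$, by the Corollary of Section 3 together with \cite{Bi2}), Lemma \ref{L4} upgrades the colinear algebra map $\varphi$ to an isomorphism $Z\simeq\mathcal{G}(A,B|C,D)$ of Galois objects.

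The main obstacle I expect is twofold. First, one must make genuinely canonical the passage from the fibre functor to the matrices $C,D$: a priori $\Omega(a)$ and $\Omega(b)$ are only determined up to the choice of bases of $W$ and $E$, so I would need to check that a different choice replaces $(C,D)$ by $(P^tCP,\,P^{-1}DP^{-1t})$ for $P\in GL_m(k)$ (and rescalings of $E$ absorbed into the $\mathcal{G}(A,B)=\mathcal{G}(\alpha A,\beta B)$ freedom), so that Proposition \ref{P 1} makes the output well-defined. Second, and more delicate, is establishing $m\geq 2$: one must rule out $m=1$, which would force $W$ to be one-dimensional and hence (by the fibre-functor half of Lemma \ref{L3}) $V_{AB}$ to be one-dimensional, contradicting $n\geq 2$; the clean way is to note that $\Omega$ preserves dimensions of one-dimensional comodules in both directions, so $\dim_k W=\dim_k\Omega(V_{AB})\geq 2$ because $V_{AB}$ is not one-dimensional. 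The verification that $\varphi$ respects both defining relations simultaneously — not just $x^tAx=Cd$ but also $xDx^t=Bd$, together with the grouplike $d$ and its inverse — is routine but must be carried out symmetrically for $a$ and $b$.
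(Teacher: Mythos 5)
Your proposal is correct and follows essentially the same route as the paper: Ulbrich's fibre functor $\Omega_Z=-\,\square_{\mathcal{G}(A,B)}\,Z$, reading off $C,D$ from $\Omega_Z(a)$ and $\Omega_Z(b)$, transporting the Schur relation $D^tC^tDC=\lambda I_m$ and the trace $tr(CD^t)=tr(AB^t)$ by $k$-linearity and monoidality, and concluding via Lemma \ref{L4}. The one place where the paper is more concrete than your appeal to a ``Galois-object universal property'' is the construction of the algebra map $\mathcal{G}(A,B|C,D)\to Z$: it realizes $\Omega_Z(V_{AB})=V_{AB}\square_{\mathcal{G}(A,B)}Z\subseteq V_{AB}\otimes Z$, expands a basis as $w_i=\sum_k v_k\otimes z_{ki}$ with $z_{ki},d_Z^{\pm1}\in Z$, and verifies that these actual elements of $Z$ satisfy $z^tAz=Cd_Z$ and $zDz^t=Bd_Z$ --- which is precisely the computation your black-box invocation would have to unpack, since $\Omega_Z(a)$ and $\Omega_Z(b)$ are only linear maps between vector spaces, not yet relations inside the algebra $Z$.
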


\begin{proof}
Let $$\begin{aligned}
\Omega_Z : \textrm{Comod}_f(\mathcal{G}(A,B))&\to \textrm{Vect}_f(k) \\
     V &\mapsto V\square_{\mathcal{G}(A,B)} Z
     \end{aligned}$$
be the monoidal functor associated to $Z$.
Let $V_{AB}$ and $D_{AB}^{\pm 1}$ denote the fundamental comodules of $\mathcal{G}(A,B)$, and let $(v_{i})_{1\leq i \leq n}$ and $d_{AB}^{\pm1}$ be their bases such that the fundamental colinear maps
$$\begin{aligned}
 &a:V_{AB}\otimes V_{AB} \to D_{AB}\\
 &b: D_{AB} \to V_{AB}\otimes V_{AB}
\end{aligned}$$
satisfy $a(v_i\otimes v_j)=A_{ij}d_{AB}$ and $b(d_{AB})=\sum_{i,j=1}^nB_{ij}v_i\otimes v_j$.

Let $(w_i)_{1\leq i\leq m}$ and $d^{\pm 1}$ be respective basis of $\Omega_Z(V_{AB})$ and $\Omega_Z(D_{AB}^{\pm 1})$. By construction of $\Omega_Z$, there exists $(z_{ij})_{1\leq i\leq n, 1\leq j \leq m}$ and $d_Z^{\pm 1}$ (see Lemma \ref{L3}) such that $$w_i=\sum_{k=1}^n v_k\otimes z_{ki} \ \ \ \ d^{\pm 1}=d_{AB}^{\pm 1}\otimes d_{Z}^{\pm 1}$$
Moreover, by definition of the cotensor product we have 
$$\begin{aligned}
\alpha(z_{ij})=\sum_k a_{ik}\otimes z_{kj} \\
\alpha(d_Z^{\pm 1}) = d'^{\pm 1}_{AB} \otimes d_Z^{\pm 1}                                                                         
\end{aligned}
$$
where $a_{ij}$ and $d'^{\pm 1}_{AB}$ denotes the generators of $\mathcal{G}(A,B)$.

Consider the bilinear map defined by the composition 
$$\xymatrix{
a':\Omega_Z(V_{AB})\otimes \Omega_Z(V_{AB}) \ar[r]^{\ \ \ \ \sim} \ar@{=}[d]& \Omega_Z(V_{AB}\otimes V_{AB}) \ar[r]^{\ \ \ \Omega_Z(a)} \ar@{=}[d]& \Omega_Z(D_{AB}) \ar@{=}[d]\\
( V_{AB}\square_{\mathcal{G}(A,B)} Z ) \otimes (V_{AB}\square_{\mathcal{G}(A,B)} Z ) \ar[r]^{\ \ \ \ \ \ \ \sim}& (V_{AB} \otimes V_{AB})\square_{\mathcal{G}(A,B)} Z  \ar[r]^{\ \ \ \ a\otimes id}& D_{AB}\square_{\mathcal{G}(A,B)} Z}$$

and let $C=(C_{ij})_{1\leq i,j\leq m}$ such that $a'(w_i\otimes w_j)=C_{ij}d$. Then we compute:

$$\begin{aligned}
  a'(w_i\otimes w_j)&=a'((\sum_{k=1}^n v_k\otimes z_{ki})\otimes (\sum_{l=1}^n v_l\otimes z_{lj}))\\
  &=\sum_{k,l} A_{kl}d_{AB}\otimes z_{ki}z_{lj}=C_{ij}d_{AB}\otimes d_{Z}
 \end{aligned}$$

or in matrix form $$z^tAz=Cd_Z$$

In the same way, consider the map 

$$\xymatrix{
b':\Omega_Z(D_{AB}) \ar[r]^{\Omega_Z(b)\ \ } \ar@{=}[d]& \Omega_Z(V_{AB}\otimes V_{AB}) \ar[r]^{\psi \ \ } \ar@{=}[d]& \Omega_Z(V_{AB})\otimes \Omega_Z(V_{AB}) \ar@{=}[d]\\ 
D_{AB}\square_{\mathcal{G}(A,B)} Z \ar[r]^{b\otimes id\ \ \ \ \ \ } & (V_{AB} \otimes V_{AB})\square_{\mathcal{G}(A,B)} Z \ar[r]^{\psi \ \ \ \ \ } & (V_{AB}\square_{\mathcal{G}(A,B)} Z) \otimes (V_{AB}\square_{\mathcal{G}(A,B)} Z)}$$

Let $D=(D_{ij})_{1\leq i,j\leq m}$ be defined by $b'(d)=\sum D_{ij} w_i\otimes w_j$.

Then we have:
$$\begin{aligned}
\psi^{-1}\circ b'(d_{AB}\otimes d_Z)&=b\otimes id (d_{AB}\otimes d_Z)\\
&=\sum_{i,j} B_{ij} v_i\otimes v_j \otimes d_Z
\end{aligned}$$
and
$$\begin{aligned}
 \psi^{-1}\circ b'(d)&=\psi^{-1}(\sum_{ij}D_{ij}(\sum_k v_k\otimes z_ki) \otimes (\sum_l v_l\otimes z_{lj})\\
 &=\sum_{kl} \sum_{ij} v_k\otimes v_l \otimes D_{ij}z_{ki}z_{lj}
\end{aligned}$$

so $$zDz^t=Bd_Z$$
Hence we have an algebra morphism $f:\mathcal{G}(A,B|C,D) \to Z$ defined by $f(x)=z$ and $f(d^{\pm 1})=d_Z^{\pm 1}$

We have to check that $f$ is colinear. Since it is an algebra map, it is sufficient to check on the generators, which is trivial by the construction of respective coactions and by the definition of $f$. Then by Lemma \ref{L4}, $f$ is an isomorphism.

Finally, Schur's lemma gives the equality $$ (a \otimes id) \circ (id\otimes b) \circ (id\otimes a)\circ(b \otimes id)=\lambda id_{D_{AB}\otimes V_{AB}} \ (\lambda \in k^*)$$ which may be rewritten in matrix form as $$B^tA^tBA=\lambda I_n$$ Since the functor $\Omega_Z$ is $k$-linear, we have $$ (a' \otimes id) \circ (id\otimes b') \circ(id\otimes a')\circ(b' \otimes id)=\lambda id_{\Omega_Z(D_{AB})\otimes \Omega_Z(V_{AB})}$$ and then $$D^tC^tDC=\lambda I_m$$ and we have $a\circ b = \text{tr}(AB^t)id_{D_{AB}}$, so, similarly, we have $\text{tr}(AB^t)=\text{tr}(CD^t)$.
\end{proof}

\begin{theo}\label{T 5}
Let $A,B\in GL_n(k)$ such that $B^tA^tBA=\lambda I_n$ and let $C_1,D_1 \in GL_{m_1}(k)$, $C_2,D_2\in GL_{m_2}(k)$ such that the algebras $\mathcal{G}(A,B|C_1,D_1)$ and $\mathcal{G}(A,B|C_2,D_2)$ are $\mathcal{G}(A,B)$-Galois objects ($n,m_1$, and $m_2 \geq 2$). Then $\mathcal{G}(A,B|C_1,D_1)$ and $\mathcal{G}(A,B|C_2,D_2)$ are isomorphic (as Galois object) if and only if $m_1=m_2:=m$ and there exists an invertible matrix $M\in GL_{m}(k)$ such that $(C_2,D_2)=(M^{-1t}C_1M^{-1},MD_1M^t)$.
\end{theo}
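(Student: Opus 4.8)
The plan is to prove both implications by tracking how the isomorphism interacts with the fibre functors attached to the Galois objects, in the spirit of the proof of Theorem \ref{T4}. Throughout, denote by $x_{ij},d,d^{-1}$ and $y_{ij},e,e^{-1}$ the generators of $\mathcal{G}(A,B|C_1,D_1)$ and $\mathcal{G}(A,B|C_2,D_2)$ respectively.

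For the easy implication, suppose $m_1=m_2=:m$ and $(C_2,D_2)=((M^{-1})^tC_1M^{-1},MD_1M^t)$ for some $M\in GL_m(k)$. I would define $f(x)=yM$ and $f(d^{\pm 1})=e^{\pm 1}$, and check that it respects the two defining relations: $f(x)^tAf(x)=M^t(y^tAy)M=M^tC_2Me=C_1e$ using $C_2=(M^{-1})^tC_1M^{-1}$, and $f(x)D_1f(x)^t=y(MD_1M^t)y^t=yD_2y^t=Be$ using $D_2=MD_1M^t$. A direct computation on the generators shows $f$ is left $\mathcal{G}(A,B)$-colinear, since the scalar matrix $M$ slides through the coaction $\alpha(x_{ij})=\sum_k a_{ik}\otimes x_{kj}$. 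Thus $f$ is a colinear algebra map between two Galois objects, hence an isomorphism by Lemma \ref{L4}.

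For the converse, let $f:\mathcal{G}(A,B|C_1,D_1)\to\mathcal{G}(A,B|C_2,D_2)$ be an isomorphism of $\mathcal{G}(A,B)$-Galois objects. By Ulbrich's correspondence \cite{Ul}, $f$ induces a monoidal natural isomorphism $\eta$ between the associated fibre functors $\Omega_{Z_1}$ and $\Omega_{Z_2}$. Evaluating $\eta$ on the fundamental comodule $V_{AB}$ gives a linear isomorphism $\Omega_{Z_1}(V_{AB})\to\Omega_{Z_2}(V_{AB})$; since these spaces have respective dimensions $m_1$ and $m_2$ (by the proof of Theorem \ref{T4}), this forces $m_1=m_2=:m$, and in the bases used there the isomorphism is an invertible matrix $M\in GL_m(k)$; evaluating $\eta$ on the one-dimensional comodule $D_{AB}$ gives a scalar $\gamma\in k^*$. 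The crucial point is that $\eta$, being natural, is compatible with the fundamental colinear maps $a:V_{AB}\otimes V_{AB}\to D_{AB}$ and $b:D_{AB}\to V_{AB}\otimes V_{AB}$ simultaneously. Writing $C_i,D_i$ as the structure constants of $\Omega_{Z_i}(a)$ and $\Omega_{Z_i}(b)$ exactly as in Theorem \ref{T4}, naturality of $\eta$ with respect to $a$ yields $M^tC_2M=\gamma C_1$, while naturality with respect to $b$ yields $MD_1M^t=\gamma D_2$. Finally, since $k$ is algebraically closed I would replace $M$ by $M':=\gamma^{-1/2}M$; one checks $(M'^{-1})^tC_1M'^{-1}=\gamma(M^{-1})^tC_1M^{-1}=C_2$ and $M'D_1M'^t=\gamma^{-1}MD_1M^t=D_2$, which is exactly the asserted relation.

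The main obstacle is the converse, and specifically obtaining the relation involving $D$. Tracking $f$ directly on generators (using that the coinvariants of a Galois object are just $k$) gives $f(d)=\gamma e$ and $f(x)=yM$ painlessly, and applying $f$ to $x^tAx=C_1d$ immediately produces the relation $M^tC_2M=\gamma C_1$. However, applying $f$ to $xD_1x^t=Bd$ only yields $y(MD_1M^t-\gamma D_2)y^t=0$, from which one cannot directly cancel the $y$'s: the quadratic monomials $y_{ik}y_{jl}$ are \emph{not} linearly independent in the Galois object (they satisfy the quadratic relations of a twisted $\mathcal{O}(GL_q(2))$). Passing through the monoidal natural isomorphism of fibre functors circumvents exactly this difficulty, because naturality enforces compatibility with $b$ as a \emph{morphism}, rather than merely with the relation it induces, and thereby delivers the $D$-relation on the same footing as the $C$-relation.
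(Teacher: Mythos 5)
Your proof is correct and follows essentially the same route as the paper: both arguments use the identification $\Omega_i(V_{AB})=V_{AB}\square_{\mathcal{G}(A,B)}\mathcal{G}(A,B|C_i,D_i)$ to read off $m_1=m_2$ and the matrix $M$ with $f(z^1)=z^2M$, and then extract the two matrix relations from the images of the defining relations. Two points of comparison. First, your tracking of the scalar $\gamma$ with $f(d_{Z_1})=\gamma d_{Z_2}$ and the final rescaling $M\mapsto\gamma^{-1/2}M$ is a genuine refinement: the paper silently takes $\gamma=1$, and your normalization is what actually makes the clean statement $(C_2,D_2)=(M^{-1t}C_1M^{-1},MD_1M^t)$ come out. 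Second, your stated ``main obstacle'' is overstated: from $y(MD_1M^t-\gamma D_2)y^t=0$ one \emph{can} cancel, not because the monomials $y_{ik}y_{jl}$ are linearly independent (they are not), but because the full matrix equation says precisely that $\sum_{k,l}N_{kl}\,w^2_k\otimes w^2_l=0$ in $(V_{AB}\otimes V_{AB})\square\,\mathcal{G}(A,B|C_2,D_2)\cong\Omega_2(V_{AB})^{\otimes 2}$, where the $w^2_k\otimes w^2_l$ \emph{are} linearly independent; this forces $N=0$. Your detour through the naturality of $\eta$ with respect to $b$ encodes exactly this linear-independence argument, so it is a valid (and slightly more transparent) packaging of the same step rather than a necessary workaround.
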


\begin{proof}
We denote by $\Omega_i$ the fibre functor associated to $\mathcal{G}(A,B|C_i,D_i)$.
Let $f: \mathcal{G}(A,B|C_1,D_1)\to \mathcal{G}(A,B|C_2,D_2)$ be a comodule algebra isomorphism: it induces an isomorphism $id\otimes f : \Omega_1(U_{AB})\to \Omega_2(U_{AB})$. Using the same notation as above, we get two basis $(w_i^1)_{1\leq i\leq m_1}$ and $(w_i^2)_{1\leq i\leq m_2}$ of $\Omega_1(U_{AB})$ and $\Omega_2(U_{AB})$. In particular, we have $m_1=m_2:=m$. Then there exists $M=(M_{ij})\in GL_m(k)$ such that $id\otimes f(w_i^1)=\sum_k M_{ji}w_j^2$, and hence 
$$\sum_k v_k\otimes f(z^1_{ki})=\sum_k v_k \otimes z_{kj}^2M_{ji}$$ which in matrix form gives $f(z^1)=z^2M$.

According to the relations defining $\mathcal{G}(A,B|C_1,D_1)$ we have 
$$(z^1)^tAz^1=C_1d \text{   and   } z^1D_1(z^1)^t=Bd$$
hence $$f((z^1)^tAz^1)=M^t(z^2)^tAz^2M=M^tC_2Md=f(C_1d)=C_1d \text{  in  } \mathcal{G}(A,B|C_1,D_1)$$ 
so $M^tC_2M=C_1$, and the second relation leads to 
$$f(z^1D_1(z^1)^t)=z^2MD_1M^t(z^2)^t=f(Bd)=Bd=z^2D_2(z^2)^t$$
so $D_2=MD_1M^t$.

Conversely, we already have $\mathcal{G}(A,B|C,D)\simeq \mathcal{G}(A,B|M^{-1t}CM^{-1},MDM^t)$, see Proposition \ref{P 2}.
\end{proof}

According to the work of Schauenburg \cite{Sch1}, the set of bi-Galois objects $\textrm{BiGal}(L,H)$ is a groupoid with multiplication given by the cotensor product. In particular, when $H=L$, the set of isomorphism classes of $H$-$H$-bi-Galois objects inherits a structure of groups. Then, we have two group morphisms $$\textrm{Aut}_{\textrm{Hopf}}(H)\to \textrm{BiGal}(H), \ f \mapsto [H^f]$$ with kernel $ \textrm{CoInn}(H):=\{f\in \textrm{Aut}_{\textrm{Hopf}}(H); \exists \phi \in Alg(H,k) \text{ with } f=(\phi \circ S)\star id_H \star \phi \}$ and we denote $ \textrm{CoOut}(H):= \textrm{Aut}_{\textrm{Hopf}}(H)/ \textrm{CoInn}(H)$; and $$H^2_{\ell}(H) \to \textrm{BiGal}(H), \ \sigma \mapsto [H(\sigma)]$$ where $H^2_{\ell}(H)$ denotes the lazy cohomology group of $H$, see \cite{BC}. From the monoidal categories viewpoint, it is the subgroup of $ \textrm{BiGal}(H)$ consisting of isomorphism classes of linear monoidal auto-equivalences of the category of $A$-comodules that are isomorphic, as functors, to the identity functor.

We assume until the end of the section that $k$ has characteristic zero.

\begin{lemma}
The automorphism group $\emph{\textrm{Aut}}_{\emph{\textrm{Hopf}}}(\mathcal{G}(A,B))$ is isomorphic with the group $$\emph{\textrm{G}}_{(A,B)}=\{P\in GL_n(k); A=P^tAP, B=P^{-1}BP^{-1t} \text{ or } A=P^tB^{-1}P, B=P^{-1}A^{-1}P^{-1t})\}/\{\pm I_n\}$$

Moreover, we have $$\emph{\textrm{CoInn}} (\mathcal{G}(A,B)) \simeq \{P\in GL_n(k); A=P^tAP, B=P^{-1}BP^{-1t}\}/\{\pm I_n\}$$ and $$\emph{\textrm{CoOut}} (\mathcal{G}(A,B)) \simeq \Bbb Z / 2\Bbb Z$$
\end{lemma}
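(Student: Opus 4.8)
The plan is to realize both groups explicitly through matrices, leaning on the isomorphic classification already in hand. By Theorem \ref{T 3} applied with $(C,D)=(A,B)$, a Hopf algebra automorphism of $\mathcal{G}(A,B)$ exists precisely for those $P\in GL_n(k)$ lying in $G_1:=\{P\,;\,A=P^tAP,\ B=P^{-1}BP^{-1t}\}$ or in $G_2:=\{P\,;\,A=P^tB^{-1}P,\ B=P^{-1}A^{-1}P^{-1t}\}$, and Proposition \ref{P 2} supplies the automorphism explicitly: $f_P(x)=PxP^{-1}$, $f_P(d)=d$ for $P\in G_1$, and $f_P(x)=Pxd^{-1}P^{-1}$, $f_P(d)=d^{-1}$ for $P\in G_2$. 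So first I would define the map $\Phi\colon G_1\cup G_2\to \mathrm{Aut}_{\mathrm{Hopf}}(\mathcal{G}(A,B))$, $P\mapsto f_P$, and show it descends to the claimed isomorphism modulo $\{\pm I_n\}$.

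Next I would check that $G_{(A,B)}=(G_1\cup G_2)/\{\pm I_n\}$ is a group and that $\Phi$ induces an isomorphism. The clean observation is that $P\in G_2$ is equivalent to saying that the congruence $X\mapsto P^tXP$ swaps $A$ and $B^{-1}$; from this, the inclusions $G_1G_1,\ G_1G_2,\ G_2G_1$ into the appropriate piece and $G_2G_2\subseteq G_1$ all follow by one-line congruence computations, so $G_1\cup G_2$ is a subgroup of $GL_n(k)$ in which $G_1$ has index at most two. That $P\mapsto f_P$ is compatible with composition (tracking $d\mapsto d^{\pm1}$ through the composite) is routine. The key point is the kernel: if $f_P=\mathrm{id}$ then $P\in G_1$ (otherwise $f_P(d)=d^{-1}\neq d$) and $Px=xP$ as matrices over $\mathcal{G}(A,B)$; since the $x_{ij}$ are linearly independent this forces $P=\mu I_n$, and then $A=P^tAP=\mu^2A$ gives $\mu=\pm1$. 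Surjectivity of $\Phi$ is exactly Theorem \ref{T 3}, yielding $\mathrm{Aut}_{\mathrm{Hopf}}(\mathcal{G}(A,B))\simeq G_{(A,B)}$.

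For $\mathrm{CoInn}$, I would unwind the definition $f=(\phi\circ S)\star \mathrm{id}\star\phi$ for a character $\phi\in\mathrm{Alg}(\mathcal{G}(A,B),k)$. Writing $\Lambda=\phi(x)\in GL_n(k)$ and $\delta=\phi(d)$, the antipode axiom gives $\phi(S(x))=\Lambda^{-1}$, so the convolution acts on the fundamental coefficients by $f(x)=\Lambda^{-1}x\Lambda$ and $f(d)=d$; thus every coinner automorphism preserves $d$ and so lies in $\Phi(G_1)$. Conversely, applying $\phi$ to the defining relations shows such a $\phi$ exists iff $\Lambda^tA\Lambda=\delta A$ and $\Lambda B\Lambda^t=\delta B$, and after rescaling $\Lambda$ (using $\mathcal{G}(A,B)=\mathcal{G}(\alpha A,\beta B)$) one sees that the matrices $P=\Lambda^{-1}$ so obtained are exactly $G_1$; in the other direction, for any $P\in G_1$ the assignment $x\mapsto P^{-1}$, $d\mapsto 1$ respects the relations and hence is a genuine character. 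This gives $\mathrm{CoInn}(\mathcal{G}(A,B))\simeq G_1/\{\pm I_n\}$.

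Combining the two identifications yields $\mathrm{CoOut}(\mathcal{G}(A,B))\simeq (G_1\cup G_2)/G_1$, a subgroup of $\mathbb{Z}/2\mathbb{Z}$ which equals $\mathbb{Z}/2\mathbb{Z}$ precisely when $G_2\neq\emptyset$, i.e. when a coouter (type-2) automorphism exists. The hard part will be to produce such an automorphism, equivalently a matrix $P$ with $P^tAP=B^{-1}$ and $P^tB^{-1}P=A$: this is the one place where the hypothesis $B^tA^tBA=\lambda I_n$ is genuinely needed, since it delivers the congruences $B^{-1}=\lambda^{-1}A^tB^tA$ and $A^{-1}=\lambda^{-1}B^tA^tB$. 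I would feed these into the flip isomorphism $\mathcal{G}(A,B)\simeq\mathcal{G}(B^{-1},A^{-1})$ of Proposition \ref{P 1} (which already sends $d\mapsto d^{-1}$) and combine it with the resulting congruence to construct the required $P$, thereby realizing the nontrivial class of $\mathrm{CoOut}$. This existence step is the crux, and it is worth isolating carefully, as it is exactly what distinguishes the coouter group from the coinner one.
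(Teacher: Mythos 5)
Your identification of $\mathrm{Aut}_{\mathrm{Hopf}}(\mathcal{G}(A,B))$ with $(G_1\cup G_2)/\{\pm I_n\}$ via Theorem \ref{T 3}, and your computation of $\mathrm{CoInn}$ by unwinding $(\phi\circ S)\star\mathrm{id}\star\phi$ (with the rescaling $\Lambda\mapsto\delta^{-1/2}\Lambda$ to normalize the character), follow the same route as the paper, which merely cites the proof of Theorem \ref{T 3} and declares the $\mathrm{CoInn}$ assertion ``easy to verify''; on these two points your write-up is actually more complete than the paper's.

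The genuine gap is exactly where you flag it: you never produce a matrix $P$ with $P^tAP=B^{-1}$ and $P^tB^{-1}P=A$, i.e.\ you never show $G_2\neq\emptyset$, so your argument only yields an embedding $\mathrm{CoOut}(\mathcal{G}(A,B))\hookrightarrow\mathbb{Z}/2\mathbb{Z}$. Moreover this step cannot be carried out in general. The hypothesis $B^tA^tBA=\lambda I_n$ gives $B^{-1}=\lambda^{-1}AB^tA^t$, which is a congruence between $B^{-1}$ and $\lambda^{-1}B^t$; it provides no congruence between $A$ and $B^{-1}$, which is a necessary condition for $G_2\neq\emptyset$. Concretely, take $A=A_q$ and $B=A_p$, so that $\mathcal{G}(A,B)$ is the two-parameter quantum $GL_2$ and $B^tA^tBA=pq\,I_2$. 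The similarity class of the cosquare $M^{-t}M$ is a congruence invariant of a nonsingular matrix $M$; here $A^{-t}A=\mathrm{diag}(-q,-q^{-1})$ while $(B^{-1})^{-t}B^{-1}=B^tB^{-1}=\mathrm{diag}(-p,-p^{-1})$, so for $p\notin\{q,q^{-1}\}$ the matrices $A$ and $B^{-1}$ are not congruent, $G_2=\emptyset$, and by your own first two identifications $\mathrm{CoOut}$ is trivial rather than $\mathbb{Z}/2\mathbb{Z}$. So the existence step you isolate as ``the crux'' is not merely unfinished: it fails under the stated hypotheses. You should also be aware that the paper's proof has the same hole --- its one-line argument (that $f\circ g\in\mathrm{CoInn}$ whenever $f,g\notin\mathrm{CoInn}$) only bounds the order of $\mathrm{CoOut}$ by $2$ and never exhibits a non-coinner automorphism. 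The last assertion of the lemma should therefore either carry the additional hypothesis that $A$ and $B^{-1}$ are congruent (automatic, e.g., when $B=A^{-1}$, or for $\mathcal{G}(A_q,A_q)=\mathcal{O}(GL_q(2))$), or be weakened to the statement that $\mathrm{CoOut}(\mathcal{G}(A,B))$ is a subgroup of $\mathbb{Z}/2\mathbb{Z}$.
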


\begin{proof}
The first isomorphism comes from the proof of Theorem \ref{T 3}, and the assertion about CoInn is easy to verify. Finally, $\textrm{CoOut} (\mathcal{G}(A,B)) \simeq \Bbb Z / 2\Bbb Z$ because for any $f,g \in \textrm{Aut}_{\textrm{Hopf}}(\mathcal{G}(A,B)) \setminus \textrm{CoInn}(\mathcal{G}(A,B))$, $f\circ g \in \textrm{CoInn}(\mathcal{G}(A,B))$.
\end{proof}

\begin{theo}
For any $n\geq 2$ and $A,B \in GL_n(k)$ such that $B^tA^tBA=\lambda I_n$ ($\lambda \in k^*$), $$\emph{\textrm{BiGal}}(\mathcal{G}(A,B))\simeq \Bbb Z / 2\Bbb Z$$
\end{theo}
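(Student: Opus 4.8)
The plan is to identify $\textrm{BiGal}(\mathcal{G}(A,B))$ with the image of the natural homomorphism $\textrm{CoOut}(\mathcal{G}(A,B)) \to \textrm{BiGal}(\mathcal{G}(A,B))$, $[f]\mapsto [H^f]$. Since the previous lemma gives $\textrm{CoOut}(\mathcal{G}(A,B))\simeq \Bbb Z/2\Bbb Z$ and the map $f\mapsto [H^f]$ has kernel exactly $\textrm{CoInn}(\mathcal{G}(A,B))$, the induced map is injective, so the whole theorem reduces to proving that it is \emph{surjective}. The group structure $\Bbb Z/2\Bbb Z$ then comes for free: the nontrivial automorphism $f_0$ (the one with $f_0(d)=d^{-1}$) satisfies $f_0^2\in \textrm{CoInn}(\mathcal{G}(A,B))$, so $[H^{f_0}]^2=[H^{f_0^2}]$ is the trivial class.

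First I would take an arbitrary $\mathcal{G}(A,B)$-bi-Galois object $Z$ and forget the right coaction. By Theorem \ref{T4} there exist $C,D\in GL_m(k)$ with $D^tC^tDC=\lambda I_m$ and $tr(CD^t)=tr(AB^t)$ such that $Z\simeq \mathcal{G}(A,B|C,D)$ as left Galois objects. The Hopf algebra attached on the right to this cogroupoid object is $\mathcal{G}(C,D)$, and the bi-Galois hypothesis forces a Hopf algebra isomorphism $\mathcal{G}(C,D)\simeq \mathcal{G}(A,B)$. By Theorem \ref{T 3} this isomorphism places $(C,D)$ in exactly one of two families: either $(C,D)=(P^tAP,P^{-1}BP^{-1t})$, in which case the attached isomorphism sends $d\mapsto d$ (call this the $+$ case), or $(C,D)=(P^tB^{-1}P,P^{-1}A^{-1}P^{-1t})$, in which case $d\mapsto d^{-1}$ (the $-$ case).

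Next I would reduce both cases to a twist of the trivial object. In the $+$ case, Theorem \ref{T 5} (with $M=P$) gives a left Galois isomorphism $\mathcal{G}(A,B|P^tAP,P^{-1}BP^{-1t})\simeq \mathcal{G}(A,B|A,B)=\mathcal{G}(A,B)$, so $Z$ is trivial as a left Galois object. The standard fact from Schauenburg's theory that a bi-Galois object with trivial underlying left Galois object is of the form $H^f$ then yields $Z\simeq H^f$ with $f$ of $+$ type, i.e. $f\in \textrm{CoInn}(\mathcal{G}(A,B))$ by the previous lemma, so $[Z]$ is trivial in $\textrm{BiGal}$. In the $-$ case I would precompose with $f_0$ (equivalently cotensor with $H^{f_0}$), which toggles the family back to the $+$ case and gives $Z\simeq H^{f_0\circ g}$ for some $g\in \textrm{CoInn}(\mathcal{G}(A,B))$. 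Either way $[Z]$ lies in the image of $\textrm{CoOut}(\mathcal{G}(A,B))$, which proves surjectivity and hence $\textrm{BiGal}(\mathcal{G}(A,B))\simeq \textrm{CoOut}(\mathcal{G}(A,B))\simeq \Bbb Z/2\Bbb Z$.

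The main obstacle is precisely the passage from the left Galois classification to bi-Galois data: Theorem \ref{T4} only sees $Z$ as a left comodule algebra, and I must control the right coaction. The two technical points are (i) identifying the right Hopf algebra of $\mathcal{G}(A,B|C,D)$ as $\mathcal{G}(C,D)$ and invoking Theorem \ref{T 3} to confine $(C,D)$ to the two families, and (ii) matching the grouplike sign $d\mapsto d^{\pm1}$ with those families so that a bi-Galois object left-isomorphic to $\mathcal{G}(A,B)$ is forced to be $H^f$ with $f$ of $+$ type, which the previous lemma identifies with $\textrm{CoInn}$. Showing that all such left-trivial bi-Galois objects collapse to the trivial class — equivalently, that the image of the lazy cohomology group in $\textrm{BiGal}(\mathcal{G}(A,B))$ is trivial — is the heart of the argument.
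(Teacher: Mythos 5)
Your proposal is correct and follows essentially the same route as the paper: classify $Z$ as a left Galois object via Theorem \ref{T4}, use Schauenburg's uniqueness of the right Hopf algebra together with Theorem \ref{T 3} to confine $(C,D)$ to the two families, and then match the two resulting classes with the image of $\textrm{CoOut}(\mathcal{G}(A,B))\simeq \Bbb Z/2\Bbb Z$ in $\textrm{BiGal}$, whose injectivity both arguments take from the free $\textrm{CoOut}$-action. Your repackaging of the second family by cotensoring with $H^{f_0}$ is just a reformulation of the paper's identification $\mathcal{G}(A,B|B^{-1},A^{-1})\simeq \mathcal{G}(A,B)^f$; the only point worth making explicit is that "$f$ of $+$ type'' in your first case is justified by tracking the sign of $\Omega_Z(D_{AB})\simeq D_{AB}^{\pm1}$ (a bi-Galois invariant), not merely by which family $(C,D)$ lands in.
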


\begin{proof}
Let $Z$ be a $\mathcal{G}(A,B)$-$\mathcal{G}(A,B)$-bi-Galois object. By Theorem \ref{T4}, there exists $m\geq 2$ and $C,D\in GL_m(k)$ verifying $D^tC^tDC=\lambda I_m$ and $tr(AB^t)=tr(CD^t)$ such that $$Z\simeq \mathcal{G}(A,B|C,D)$$ as a $\mathcal{G}(A,B)$-Galois object. Since $\mathcal{G}(A,B|C,D)$ is also a $\mathcal{G}(A,B)$-$\mathcal{G}(C,D)$-bi-Galois object, the Hopf algebras $\mathcal{G}(A,B)$ and $\mathcal{G}(C,D)$ are isomorphic (by \cite{Sch1}, Theorem 3.5), so, by Theorem \ref{T 3}, $m=n$ and there exists $P\in GL_n(k)$ such that $(C,D)\in \{(P^tAP,  P^{-1}BP^{-1t}),( P^tB^{-1}P, P^{-1}A^{-1}P^{-1t})\}$. Then we have either $$Z\simeq \mathcal{G}(A,B|C,D) \simeq \mathcal{G}(A,B)$$ or $$Z\simeq \mathcal{G}(A,B|C,D) \simeq \mathcal{G}(A,B|B^{-1}, A^{-1})$$ as left Galois objects. Moreover, according to \cite{Sch1}, Lemma 3.11, $\textrm{CoOut} (\mathcal{G}(A,B))$ acts freely on $\textrm{BiGal}(\mathcal{G}(A,B))$ by $$f \in \textrm{CoOut} (\mathcal{G}(A,B)), \ A \in \textrm{BiGal}(\mathcal{G}(A,B)) \ : f.A = A^f$$
Then we have to check that $$\mathcal{G}(A,B|B^{-1}, A^{-1}) \simeq \mathcal{G}(A,B)^f$$ where $f \in \textrm{CoOut} (\mathcal{G}(A,B))$ is non trivial. To do so, it is easy to verify that $$\Omega_{\mathcal{G}(A,B|B^{-1}, A^{-1})}(D_{AB})\simeq D^{-1}_{AB} \simeq \Omega_{\mathcal{G}(A,B)^f}(D_{AB})$$ where $\Omega_Z$ denote the fiber functor induced by $Z$. Then by Lemma \ref{L 2}, the functors are isomorphic, and according to Ulbrich's work \cite{Ul}, the bi-Galois objects are isomorphic.
\end{proof}

Finally, from the interpretation of bi-Galois objects as functor, we get:
\begin{theo}
For any $n\geq 2$ and $A,B \in GL_n(k)$ such that $B^tA^tBA=\lambda I_n$ ($\lambda \in k^*$), $H_{\ell}^2(\mathcal{G}(A,B))$ is trivial.
\end{theo}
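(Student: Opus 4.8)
The plan is to identify the lazy cohomology group $H^2_\ell(\mathcal{G}(A,B))$ inside $\mathrm{BiGal}(\mathcal{G}(A,B))$ and show it is trivial by tracking the action of the associated monoidal auto-equivalence on the one-dimensional comodule $D_{AB}$. Recall from the discussion preceding the statement that $H^2_\ell(\mathcal{G}(A,B))$ is precisely the subgroup of $\mathrm{BiGal}(\mathcal{G}(A,B))$ consisting of isomorphism classes of linear monoidal auto-equivalences of $\mathrm{Comod}(\mathcal{G}(A,B))$ that are isomorphic, as functors, to the identity functor. Since the previous theorem establishes the group isomorphism $\mathrm{BiGal}(\mathcal{G}(A,B))\simeq \mathbb{Z}/2\mathbb{Z}$, it suffices to exhibit that the nontrivial element of $\mathrm{BiGal}(\mathcal{G}(A,B))$ does not lie in $H^2_\ell(\mathcal{G}(A,B))$, whence the lazy cohomology group reduces to the trivial class.

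First I would recall, from the proof of the preceding theorem, that the nontrivial bi-Galois object is $\mathcal{G}(A,B\,|\,B^{-1},A^{-1})$, and that its induced fibre functor $\Omega$ sends the fundamental one-dimensional comodule $D_{AB}$ to $D_{AB}^{-1}$, that is, $\Omega(D_{AB})\simeq D_{AB}^{-1}\not\simeq D_{AB}$. Then I would invoke Lemma \ref{L3}: a monoidal functor preserves one-dimensionality, and more to the point, the inverse of a one-dimensional comodule is determined functorially. The key observation is that any monoidal auto-equivalence $\Omega$ of $\mathrm{Comod}(\mathcal{G}(A,B))$ that is isomorphic to the identity functor must in particular satisfy $\Omega(D_{AB})\simeq D_{AB}$, since a natural isomorphism $\Omega \Rightarrow \mathrm{id}$ gives an isomorphism of comodules $\Omega(D_{AB})\simeq D_{AB}$.

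Therefore the nontrivial element, for which $\Omega(D_{AB})\simeq D_{AB}^{-1}$, cannot represent a functor isomorphic to the identity: the classes $[D_{AB}]$ and $[D_{AB}^{-1}]$ are distinct in $\mathcal{R}^+(\mathcal{G}(A,B))$ (they agree only when $D_{AB}^{\otimes 2}\simeq k$, which does not hold here since $d$ is a genuine infinite-order group-like generating the $\mathbb{Z}$-graded part of the fusion ring). Consequently, the only class of $\mathrm{BiGal}(\mathcal{G}(A,B))$ lying in $H^2_\ell(\mathcal{G}(A,B))$ is the trivial one, and we conclude $H^2_\ell(\mathcal{G}(A,B))$ is trivial.

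I expect the main obstacle to be making precise the assertion that a functor isomorphic to the identity must fix the iso-class of $D_{AB}$, together with the corresponding distinctness of $[D_{AB}]$ and $[D_{AB}^{-1}]$ in the fusion semi-ring; this is where the detailed description of the simple comodules (the $\mathbb{Z}$-grading coming from the group-like $d$) recalled at the start of Section 4 must be used, and one should double-check it in both the cosemisimple and non-cosemisimple regimes. Everything else is a direct bookkeeping of the identification $\mathrm{BiGal}(\mathcal{G}(A,B))\simeq\mathbb{Z}/2\mathbb{Z}$ from the previous theorem.
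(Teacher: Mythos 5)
Your proposal is correct and follows essentially the same route as the paper: the paper derives the theorem directly from the identification $\mathrm{BiGal}(\mathcal{G}(A,B))\simeq \mathbb{Z}/2\mathbb{Z}$ together with the interpretation of $H^2_{\ell}$ as the subgroup of classes whose associated monoidal auto-equivalence is isomorphic to the identity functor, the nontrivial class being excluded because its fibre functor sends $D_{AB}$ to $D_{AB}^{-1}\not\simeq D_{AB}$. Your write-up simply makes explicit the steps the paper leaves implicit, and your justification that $[D_{AB}]\neq[D_{AB}^{-1}]$ (the $\mathbb{Z}$-family $D^{\otimes e}$ consists of pairwise non-isomorphic simples in both the cosemisimple and non-cosemisimple regimes) is sound.
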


In particular, according to \cite{BC}, Theorem 3.8, $\mathcal{G}(A,B)$ has no non-trivial bi-cleft bi-Galois object.

\section{Hopf $*$-algebras structure on $\mathcal{G}(A,B)$}

In this section, $k=\Bbb C$. We classify CQG algebras which are $GL(2)$-deformations (or rather $U(2)$-deformations). 

Let us recall that a Hopf $*$-algebra is a Hopf algebra $H$ which is also a $*$-algebra and such that the comultiplication is a $*$-homomorphism. If $x=(x_{ij})_{1\leq i,j\leq n}\in M_n(H)$ is a matrix with coefficient in $H$, the matrix $(x^*_{ij})_{1\leq i,j\leq n}$ is denoted by $\overline{x}$, while $\overline{x}^t$, the transpose matrix of $\overline{x}$, is denoted by $x^*$. The matrix $x$ is said to be unitary if $x^*x=I_n=xx^*$. Recall (\cite{KS}) that a Hopf $*$-algebra is said to be a CQG algebra if for every finite-dimensional $H$-comodule with associate matrix $x\in M_n(H)$, there exists $K\in GL_n(\Bbb C)$ such that the matrix $KxK^{-1}$ is unitary. CQG algebras correspond to Hopf algebras of representative functions on compact quantum groups.

We begin with a lemma which gives an example of CQG algebra structure on $\mathcal{G}(A,B)$.

\begin{lemma}
Let $E\in GL_n(\Bbb C)$ such that $\overline{E}^tE^t\overline{E}E=\lambda I_n$ for $\lambda \in \Bbb C$. Then $\lambda \in \Bbb R^*_+$ and the Hopf algebra $\mathcal{G}(E,\overline{E})$ is a CQG algebra for the following $*$-algebra structure:
$$d^*=d^{-1}   \text{  and  } \overline{x}=E^td^{-1}xE^{-1t}$$
The CQG algebra $\mathcal{G}(E,\overline{E})$ will be denoted by $A_{\tilde{o}}(E)$.
\end{lemma}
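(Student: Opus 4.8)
The plan is to prove the three assertions—positivity of $\lambda$, existence of the $*$-structure, and the CQG property—in that order, the second being by far the most delicate.

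First, positivity. I would set $M = \overline{E}E \in GL_n(\mathbb{C})$ and note that $M^* = \overline{M}^t = (E\overline{E})^t = \overline{E}^tE^t$, so that the hypothesis reads exactly $M^*M = \lambda I_n$. Since $M$ is invertible, $M^*M$ is a positive-definite Hermitian matrix; being a scalar multiple of the identity it forces $\lambda = \overline{\lambda}$ and $\lambda > 0$, i.e. $\lambda \in \mathbb{R}^*_+$. (Equivalently, taking traces, $\lambda n = \mathrm{tr}(M^*M) = \sum_{i,j}|M_{ij}|^2 > 0$.)

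Second, the $*$-structure. The first move is to rewrite the proposed involution through the antipode $S$ of Section~2: a direct entrywise computation shows that $\overline{x} = E^t d^{-1} x E^{-1t}$ is equivalent to $x^* = S(x)$ as matrices, i.e. $x_{ij}^* = S(x_{ji})$ (and $d^{\pm 1}{}^* = d^{\mp 1}$). Before checking that this descends to $\mathcal{G}(E,\overline{E})$ and is involutive, I would derive, using only the defining relations $x^tEx=Ed$ and $x\overline{E}x^t=\overline{E}d$ (eliminating $x^t$ between the two and using that $d$ commutes with scalar matrices), the commutation rule
$$ d^{-1} x d = (\overline{E}E)\, x\, (\overline{E}E)^{-1}. $$
Well-definedness then amounts to checking that the images of the two defining relations under the conjugate-linear anti-multiplicative extension again vanish; because the map is conjugate-linear it sends the $E$-relation to an $\overline{E}$-relation (and conversely) for $S(x)=x^{-1}$, and these follow from the relations already known to be respected by $S$ together with the commutation rule and the hypothesis. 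Involutivity is the real content: computing $(x^*)^*$ from $x^*=S(x)$ and substituting the commutation rule yields $(x^*)^* = N^t (d^{-1}xd) N^{-t}$ with $N = E\overline{E}$, so $(x^*)^* = x$ reduces to $(\overline{E}E)\,x\,(\overline{E}E)^{-1} = (E\overline{E})^{-t}\,x\,(E\overline{E})^{t}$. This holds precisely because the hypothesis $\overline{E}^tE^t\overline{E}E=\lambda I_n$ says $(E\overline{E})^t(\overline{E}E)=\lambda I_n$, whence $\overline{E}E = \lambda (E\overline{E})^{-t}$ and the two conjugations coincide (the scalar $\lambda$ being irrelevant for conjugation). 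This matching step is exactly where the hypothesis is indispensable, and carrying out the two matrix computations cleanly is the main obstacle of the lemma.

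Third, I would verify that $\mathcal{G}(E,\overline{E})$ is a Hopf $*$-algebra and then that it is CQG. For the Hopf $*$-algebra structure one checks on generators that $\Delta$ and $\varepsilon$ are $*$-homomorphisms: using $\Delta\circ S = (S\otimes S)\circ \Delta^{\mathrm{cop}}$ one gets $\Delta(x_{ij}^*) = \Delta(S(x_{ji})) = \sum_k S(x_{ki})\otimes S(x_{jk}) = \sum_k x_{ik}^*\otimes x_{kj}^*$, and $\varepsilon(x_{ij}^*) = \overline{\varepsilon(x_{ij})}$. For the CQG property the decisive observation is that the generating corepresentations are already unitary: $d^*d = d^{-1}d = 1 = dd^*$, and since $x^* = S(x)$ one has $x^*x = S(x)x = I_n = xS(x) = xx^*$, so $x$ is unitary with $K = I_n$. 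As $\mathcal{G}(E,\overline{E})$ is generated as a $*$-algebra by the coefficients of the unitary corepresentations $V=(x_{ij})$ and $D=(d)$, it is spanned by matrix coefficients of tensor products of $V$, $\overline{V}$ and $D$, all of which are unitary; by the standard characterization of CQG algebras (\cite{KS}) this makes $\mathcal{G}(E,\overline{E})$ a CQG algebra, so that every finite-dimensional comodule is unitarizable. I would finally record the notation $A_{\tilde{o}}(E) := \mathcal{G}(E,\overline{E})$.
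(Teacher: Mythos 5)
Your proposal is correct and follows essentially the same route as the paper: positivity via $M^*M=\lambda I_n$ for $M=\overline{E}E$, well-definedness and involutivity via the second expression for $\overline{x}$ (your commutation rule $d^{-1}xd=(\overline{E}E)x(\overline{E}E)^{-1}$ is exactly the paper's identity $\overline{x}=\overline{E}^{-1t}xd^{-1}\overline{E}^{t}$ in disguise, and the hypothesis enters at the same matching step), and the CQG property from $x^*=S(x)$ giving $x^*x=I_n=xx^*$ together with the criterion of \cite{KS}. The only additions are cosmetic: you make the identification $x^*=S(x)$ explicit from the start and record the routine check that $\Delta$ is a $*$-homomorphism, which the paper leaves implicit.
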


\begin{proof}
First, notice that because of the relations defining $\mathcal{G}(E,\overline{E})$ and the condition on $E$, we also have $\overline{x}=\overline{E}^{-1t}xd^{-1}\overline{E}^{t}$. Then we can verify that our structure is well defined: 
for the first relation, we compute: 
$$\begin{aligned}
  \overline{E^{-1}x^tEx}&=((\overline{Ex})^t(\overline{E^{-1}x^t})^t)^t \\
  &=((\overline{E}E^td^{-1}xE^{-1t})^t(\overline{E^{-1}}(\overline{E}^{-1t}xd^{-1}\overline{E}^{t})^t)^t)^t\\
  &=((E^{-1}d^{-1}x^tE\overline{E}^t)(\overline{E}^{-1t}xd^{-1}\overline{E}^{t}\overline{E^{-1t}}))^t=d^{-1}
 \end{aligned}$$
and for the second one we get:
$$\begin{aligned}
\overline{\overline{E}^{-1}x\overline{E}x^t}&=((E\overline{x}^t)^t(E^{-1}\overline{x})^t)^t \\
&=((E(E^td^{-1}xE^{-1t})^t)^t(E^{-1}\overline{E}^{-1t}xd^{-1}\overline{E}^{t})^t)^t \\
&=((E^td^{-1}xE^{-1t}E^t)(\overline{E}x^td^{-1}\overline{E}^{-1}E^{-1t}))^t= d^{-1}
 \end{aligned}$$
Let us show that we have a $*$-structure and that $x$ is unitary: 
first $$\overline{\overline{x}}=\overline{E^td^{-1}xE^{-1t}}=\overline{E}^t\overline{x}d\overline{E}^{-1t}=\overline{E}^t\overline{E}^{-1t}xd^{-1}\overline{E}^{t}d\overline{E}^{-1t}=x$$
and finally, we have 
$$\begin{aligned}
x^*=\overline{x}^t&=(\overline{E}^{-1t}xd^{-1}\overline{E}^{t})^t= \overline{E} x^td^{-1}\overline{E}^{-1}\\
&=(E^td^{-1}xE^{-1t})^t=E^{-1}d^{-1}x^tE
\end{aligned}$$
so according to the relations defining $\mathcal{G}(E,\overline{E})$ we have $x^*x=xx^*=I_n$, $d^*d=dd^*=1$ and, by \cite{KS}, $\mathcal{G}(E,\overline{E})$ is CQG.

Finally, we have $\overline{E}^tE^t\overline{E}E=\lambda I_n=(E^tE^{t*})(EE^*)$, so $\lambda \in \Bbb R^*_+$.
\end{proof}

The terminology $A_{\tilde{o}}(E)$ follows from the recent paper \cite{BBCC}, where $ \tilde{O}_n$ denotes the subgroup of $U_n(\Bbb C)$ generated by $O_n(\Bbb R)$ and $\Bbb T.I_n$.

As a special case of the lemma, we get the following result from \cite{HM}:

\begin{cor}
The Hopf algebra $\mathcal{O}(GL_{q,\overline{q}}(2))$ is a CQG algebra, for the $*$-structure given by $$D^*=D^{-1}\text{ and } \left(\begin{array}{cc}
a^* & b^* \\
c^* & d^* 
\end{array}\right)=\left(\begin{array}{cc}
dD^{-1} & -qcD^{-1} \\
-q^{-1}bD^{-1}  & aD^{-1} 
\end{array}\right)$$
In particular, for $q\in \Bbb R^*$, $\mathcal{O}(GL_{q}(2))$ is CQG.
\end{cor}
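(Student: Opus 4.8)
The plan is to realize $\mathcal{O}(GL_{q,\overline q}(2))$ as an instance of the family $\mathcal{G}(E,\overline E)$ treated in the previous lemma, and then simply read off the resulting $*$-structure. Recall from Section~2 that $\mathcal{G}(A_q,A_p)=\mathcal{O}(GL_{q,p}(2))$, where $A_q=\left(\begin{smallmatrix}0&1\\-q&0\end{smallmatrix}\right)$. Taking $E=A_q$ we have $\overline E=A_{\overline q}$, so that $\mathcal{G}(E,\overline E)=\mathcal{G}(A_q,A_{\overline q})=\mathcal{O}(GL_{q,\overline q}(2))$. Thus the corollary will follow from the lemma as soon as $E=A_q$ is checked to satisfy its hypothesis.

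First I would verify the algebraic condition $\overline E^tE^t\overline E E=\lambda I_2$. A direct computation with $E=A_q$ gives $\overline E E=\mathrm{diag}(-q,-\overline q)$ and then $\overline E^tE^t\overline E E=|q|^2 I_2$, so the hypothesis holds with $\lambda=q\overline q\in\mathbb R_+^*$. The lemma then immediately produces a CQG algebra structure on $\mathcal{G}(A_q,A_{\overline q})=\mathcal{O}(GL_{q,\overline q}(2))$, with $D^*=D^{-1}$ and $\overline x=E^tD^{-1}xE^{-1t}$, where here $D$ plays the role of the group-like generator of $\mathcal{G}$ and $x=\left(\begin{smallmatrix}a&b\\c&d\end{smallmatrix}\right)$ is the fundamental matrix of $\mathcal{O}(GL_{q,\overline q}(2))$.

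It remains to translate the abstract formula $\overline x=E^tD^{-1}xE^{-1t}$ into the explicit matrix stated in the corollary. I would compute $E^txE^{-1t}$ using $E^t=\left(\begin{smallmatrix}0&-q\\1&0\end{smallmatrix}\right)$ and $E^{-1t}=\left(\begin{smallmatrix}0&1\\-q^{-1}&0\end{smallmatrix}\right)$; this yields $E^txE^{-1t}=\left(\begin{smallmatrix}d&-qc\\-q^{-1}b&a\end{smallmatrix}\right)$, and inserting the factor $D^{-1}$ gives exactly the displayed formula for $\overline x$. The only point requiring a little care is the bookkeeping caused by the clash of notation between the group-like generator $d$ of $\mathcal{G}$ and the entry $d$ of the fundamental matrix, together with the placement of $D^{-1}$, which one justifies using the commutation relations of $\mathcal{O}(GL_{q,\overline q}(2))$ (equivalently, the centrality properties of the group-like $D$). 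No genuine obstacle arises: the statement is entirely a matter of specializing the lemma and performing this short matrix computation.

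Finally, for the ``in particular'' assertion, I would observe that when $q\in\mathbb R^*$ one has $\overline q=q$, whence $\mathcal{O}(GL_{q,\overline q}(2))=\mathcal{O}(GL_{q,q}(2))=\mathcal{O}(GL_q(2))$, again using $\mathcal{G}(A_q,A_q)=\mathcal{O}(GL_q(2))$ from Section~2, so the CQG property descends to $\mathcal{O}(GL_q(2))$ with no further work.
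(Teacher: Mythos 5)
Your proposal is correct and is exactly the paper's own (implicit) proof: the corollary is presented there as ``a special case of the lemma,'' obtained precisely by taking $E=A_q$ so that $\mathcal{G}(E,\overline{E})=\mathcal{G}(A_q,A_{\overline{q}})=\mathcal{O}(GL_{q,\overline{q}}(2))$, with $\overline{E}^tE^t\overline{E}E=q\overline{q}\,I_2$ exactly as you compute, and with $E^txE^{-1t}=\left(\begin{smallmatrix}d&-qc\\-q^{-1}b&a\end{smallmatrix}\right)$. The only point worth making explicit is the one you already flag: since $D$ is central only when $q\in\mathbb{R}$, moving $D^{-1}$ from the middle of $E^tD^{-1}xE^{-1t}$ to the right of each entry uses $D^{-1}c=\overline{q}q^{-1}cD^{-1}$ and $D^{-1}b=q\overline{q}^{-1}bD^{-1}$ (so the right-hand normalization literally reads $-\overline{q}cD^{-1}$ and $-\overline{q}^{-1}bD^{-1}$, equivalently $D^{-1}$ placed on the left of the displayed matrix), a bookkeeping detail the paper does not spell out either.
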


We can state and prove the main theorem of this section: 

\begin{theo}\label{T 6}
The CQG algebras whose corepresentation semi-ring is isomorphic that of $U_2(\Bbb C)$ are exactly the $$A_{\tilde{o}}(E)$$ where $E\in GL_n(\Bbb C)$ $(n\geq 2)$ satisfies $\overline{E}^tE^t\overline{E}E=\lambda I_n$ for $\lambda \in \Bbb R^*_+$. 
\end{theo}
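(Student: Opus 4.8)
The plan is to prove the two inclusions separately, reducing everything to the family $\mathcal{G}(A,B)$ via Theorem~\ref{T 2} and then reading off the constraints that a compact (CQG) structure imposes on the pair $(A,B)$. I will use throughout that a CQG algebra is cosemisimple, and that the corepresentation semi-ring of $U_2(\mathbb{C})$ coincides with that of $GL_2(\mathbb{C})$ (the complexification of $U(2)$ being $GL_2(\mathbb{C})$); thus ``CQG algebra with the semi-ring of $U_2(\mathbb{C})$'' means exactly ``$GL(2)$-deformation carrying a CQG structure''.

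For the first inclusion, the preceding lemma already shows that $A_{\tilde{o}}(E)=\mathcal{G}(E,\overline{E})$ is a CQG algebra whenever $\overline{E}^tE^t\overline{E}E=\lambda I_n$, with $\lambda\in\mathbb{R}^*_+$. It remains to identify its semi-ring. Here $(A,B)=(E,\overline{E})$ satisfies $B^tA^tBA=\overline{E}^tE^t\overline{E}E=\lambda I_n$, so Theorem~\ref{T 1} applies; being CQG, $A_{\tilde{o}}(E)$ is cosemisimple, and by the cosemisimplicity criterion established in the proof of Theorem~\ref{T 2} this forces the corresponding parameter $q$ to be generic. Theorem~\ref{T 1} then gives $\textrm{Comod}(A_{\tilde{o}}(E))\simeq^{\otimes}\textrm{Comod}(\mathcal{O}(GL_q(2)))$ with $q$ generic, so the semi-ring is that of $GL_2(\mathbb{C})=U_2(\mathbb{C})$.

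For the converse, let $H$ be a CQG algebra whose semi-ring is that of $U_2(\mathbb{C})$. Then $H$ is cosemisimple, so Theorem~\ref{T 2} furnishes $A,B\in GL_n(\mathbb{C})$ with $B^tA^tBA=\lambda I_n$ and a Hopf algebra isomorphism $H\cong\mathcal{G}(A,B)$, along which I transport the $*$-structure; the fundamental comodule $V=U_{(1,0)}$ then has matrix of coefficients $x=(x_{ij})$, the generators of $\mathcal{G}(A,B)$. Since a CQG structure allows one to unitarize the fundamental comodule, after replacing $(A,B)$ by an equivalent pair (Proposition~\ref{P 2}), which is a CQG isomorphism once the $*$-structure is transported, I may assume that $x$ is unitary. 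Unitarity reads $\overline{x}^t=x^{-1}=S(x)=d^{-1}A^{-1}x^tA$, whence $\overline{x}=A^td^{-1}xA^{-1t}$; moreover the one-dimensional comodule $D$ is unitary and group-like, so $d^*=d^{-1}$. Thus the transported $*$-structure is forced to be exactly the $A_{\tilde{o}}$-type structure built from $E=A$.

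It remains to pin down $B$. The anti-linear anti-automorphism just described must respect the second defining relation $xBx^t=Bd$ of $\mathcal{G}(A,B)$; substituting $\overline{x}=A^td^{-1}xA^{-1t}$ and $d^*=d^{-1}$ and running the verification of the preceding lemma in reverse, together with the uniqueness (Schur's lemma, since $\dim\textrm{Hom}(D,V\otimes V)=1$ because $V\otimes V\cong U_{(2,0)}\oplus D$) of the invariant map $D\to V\otimes V$, forces $B$ to be a scalar multiple of $\overline{A}$. Since $\mathcal{G}(A,B)=\mathcal{G}(\alpha A,\beta B)$, I may rescale to obtain $B=\overline{A}$, so that $\mathcal{G}(A,B)=\mathcal{G}(A,\overline{A})=A_{\tilde{o}}(A)$ as CQG algebras, with $\overline{A}^tA^t\overline{A}A=\lambda I_n$ and $\lambda\in\mathbb{R}^*_+$ by the positivity coming from unitarity. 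The main obstacle is precisely this last step: carrying out the compatibility computation that pins $B$ to $\overline{A}$ while tracking the scalars correctly, and confirming that the unitarization and rescaling are genuine Hopf $*$-algebra isomorphisms, so that the final identification is an isomorphism of CQG algebras and not merely of Hopf algebras.
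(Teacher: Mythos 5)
Your proof is correct, but for the converse inclusion it takes a genuinely different route from the paper. The paper does \emph{not} pass through Theorem \ref{T 2}: instead it works directly inside the CQG algebra $H$, uses $\overline{U_{(1,0)}^H}\simeq U^H_{(1,-1)}$ to produce two intertwiners $F,G$ with $x=F\overline{x}dF^{-1}$ and $x=Gd\overline{x}G^{-1}$, combines these with unitarity to obtain the relations $xF^tx^t=dF^t$ and $x^tG^{-1t}x=dG^{-1t}$, and then invokes the universal property of $\mathcal{G}(E,\overline{E})$ with $E=\overline{F}^t$ to build a Hopf $*$-algebra morphism $A_{\tilde{o}}(E)\to H$, which Tannaka--Krein upgrades to an isomorphism. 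Your top-down route --- first realize $H\cong\mathcal{G}(A,B)$ via Theorem \ref{T 2}, unitarize, observe that $\overline{x}=A^td^{-1}xA^{-1t}$ and $d^*=d^{-1}$ are forced, then pin $B$ to a scalar multiple of $\overline{A}$ --- is legitimate and arguably more economical, since Theorem \ref{T 2} already supplies the Hopf algebra isomorphism and only the $*$-structure remains to be identified. The one step you should make precise is the claim $B\in k^*\overline{A}$: the clean justification is not the ``reverse computation'' (which requires juggling the $d$--$x_{ij}$ commutation relations and transposes over a noncommutative ring) but the observation that, the comodules $V$ and $D$ being unitary, the Hilbert-space adjoint $a^*:D\to V\otimes V$ of the intertwiner $a$ is again an intertwiner with matrix $\overline{A}$ in the orthonormal bases, so Schur's lemma applied to the one-dimensional space $\mathrm{Hom}(D,V\otimes V)$ gives $b\in k^*a^*$, i.e.\ $B\in k^*\overline{A}$; the positivity of $\lambda$ then comes for free from the identity $\lambda I_n=(E^tE^{t*})(EE^*)$ already noted in the lemma, rather than from any separate unitarity argument. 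With that step made explicit, your argument is complete; what the paper's version buys in exchange for redoing part of the construction is that it never needs to verify that the unitarization and rescaling are $*$-compatible, since the $*$-morphism is produced in one stroke by the universal property.
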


\begin{proof}
First of all, the algebra $A_{\tilde{o}}(E)$ are indeed $U(2)$-deformations, according to the previous lemma and to Theorem \ref{T 1}.

Let $H$ be a CQG algebra such that $\mathcal{R}^+(H)\simeq \mathcal{R}^+(\mathcal{O}(U(2))$. Let denote by $d_H, d^{-1}_H$ and $x=(x_{ij})_{1\leq i,j \leq n}$ ($2\leq n$) the matrix coefficients of $U_{(0,1)}, U_{(0,-1)}$ and $U_{(1,0)}$ respectively. Since $H$ is a CQG algebra, we have $d_H^*=d_H^{-1}$ and we can assume that the matrix $x$ is unitary. Lemma \ref{L 2} and its proof gives us $\overline{U_{(1,0)}^H} \simeq U^H_{(1,-1)} \simeq U^H_{(0,-1)} \otimes U^H_{(1,0)}$, hence there exist $F,G\in GL_n(\Bbb C)$ ($n=\dim_{\Bbb C} U_{(1,0)}^H$) such that 
$$x=F\overline{x}dF^{-1},  \ \ x=Gd\overline{x} G^{-1} \text{   and   }  xx^*=I_n=x^*x$$ where $\overline{x}=(x^*_{ij})_{1\leq i,j\leq n}$ and $x^*=\overline{x}^t$.
We have $$x=\overline{\overline{x}}=\overline{G^{-1}}F^{-1}xF\overline{G}$$ hence we get $$\overline{F}G=\nu I_n\text{  for some  } \nu \in \Bbb C^*$$ and using the relations $xx^*=I_n=x^*x$ we get : $$xF^tx^t=dF^t \text{  and  } x^tG^{-1t}x=dG^{-1t}$$ We put $E=\overline{F}^{t}$ and using the universal propertie of $A_{\tilde{o}}(E)=\mathcal{G}(E,\overline{E})$, we get a Hopf $*$-algebra morphism $$ f : A_{\tilde{o}}(E) \to H$$
such that 
$$f(d)=d_H, \ \ f(d^{-1})=d^{-1}_H , \ \ f(x)=x_H$$ 
Since $H$ is cosemisimple, the matrices $F$ and $G$ must satify $G^{-1t}F^tG^{-1}F=\mu I_n$ with $\mu \in \Bbb C^*$. Then $E$ satisfies $\overline{E}^tE^t\overline{E}E=\lambda I_n=(E^tE^{t*})(EE^*)$ for $\lambda \in \Bbb R^*_+$. So we know from Theorem \ref{T 1} that the corepresentation semi-ring of $A_{\tilde{o}}(E)$ is isomorphic to that of $U(2)$, hence $f$ induces an isomorphism of semi-ring between $\mathcal{R}^+(A_{\tilde{o}}(E))$ and $\mathcal{R}^+(H)$. We conclude by Tannaka-Krein reconstruction techniques that $f : A_{\tilde{o}}(E) \to H$ is a Hopf $*$-algebra isomorphism. 

\end{proof}

\appendix

\section*{Appendix: proof of Lemma \ref{L 1}}

This section is devoted to the proof of Lemma \ref{L 1}. The strategy of our proof is to write a convenient presentation of the algebra $\mathcal{G}(A_q,A_q|C,D)$ so that we can apply the diamond lemma (Bergman, \cite{Ber}) to get some linearly independent elements: this will imply that $\mathcal{G}(A_q,A_q|C,D)$ is non zero.

Recall that the algebras $\mathcal{G}(A,B|C,D)$ and $\mathcal{G}(P^tAP,P^{-1}BP^{-1t}|Q^tCQ,Q^{-1}DQ^{-1t})$
are isomorphic by Proposition \ref{P 1}. Combining this fact with the following well known lemma, and we can assume that $D_{mm}=0$:
\begin{lemmaa}
Let $M\in GL_n(k)$ ($n\geq 2$). Then there exist a matrix $P\in GL_n(k)$ such that $(P^tMP)_{nn}=0$.
\end{lemmaa}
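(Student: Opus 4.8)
The plan is to reduce the statement to the existence of a nonzero isotropic vector for the quadratic form attached to $M$, and then to complete that vector to an invertible matrix. Writing $p\in k^n$ for the last column of $P$, a direct computation gives $(P^tMP)_{nn}=\sum_{i,j}P_{in}M_{ij}P_{jn}=p^tMp$, so the $(n,n)$-entry of $P^tMP$ depends only on the last column of $P$. The problem therefore becomes: find a nonzero vector $p\in k^n$ with $p^tMp=0$, and then build $P$ having $p$ as its last column. The second task is free of charge: once such a $p\neq 0$ is found, I extend it to a basis $(p_1,\dots,p_{n-1},p)$ of $k^n$ and take $P$ to be the matrix whose columns are these vectors, so that $P\in GL_n(k)$ and its last column is $p$.

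It remains to produce a nonzero $p$ with $Q(p):=p^tMp=0$. The key point is that $Q$ is a quadratic form in $n\geq 2$ variables over the algebraically closed field $k$, and such a form is always isotropic. When $\mathrm{char}(k)\neq 2$ I would argue as follows: here $Q(v)=v^tSv$ with $S=\tfrac12(M+M^t)$ symmetric, so if $S=0$ (that is, $M$ antisymmetric) then $Q\equiv 0$ and any nonzero vector works. Otherwise I diagonalize $S$ by congruence, $R^tSR=\mathrm{diag}(d_1,\dots,d_n)$, and exhibit an isotropic vector in the new coordinates: if some $d_i=0$ the corresponding basis vector does the job, while if all $d_i\neq 0$ then $e_1+\sqrt{-d_1/d_2}\,e_2$ is isotropic, the square root existing since $k$ is algebraically closed. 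Transporting back by $R$ gives the desired $p$.

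The characteristic $2$ case is handled in the same spirit by restricting $Q$ to the plane spanned by two coordinate vectors and finding a zero of the resulting binary quadratic form; this comes down to solving a quadratic (or, in the degenerate case, a linear) equation, which has a root over the algebraically closed field $k$. In all cases the argument is elementary.

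The only genuinely delicate feature is that the symmetric part $S$ of $M$ need not be invertible, and may even vanish, so one cannot simply invoke nondegeneracy of a bilinear form; this is precisely why the case distinction on the diagonal entries $d_i$ (and, in characteristic $2$, the separate treatment of the degenerate binary form) is needed. Everything else, namely the reduction to the last column and the completion to an element of $GL_n(k)$, is formal, and the essential input is the standard fact that a quadratic form in at least two variables over an algebraically closed field has a nontrivial zero.
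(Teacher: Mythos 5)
Your proof is correct. Note that the paper itself offers no argument here: it simply labels the lemma ``well known'' and moves on, so there is no proof to compare yours against; your write-up supplies the standard justification. Two small remarks. First, the case split on the characteristic is unnecessary: your own characteristic-$2$ argument (restrict $Q(v)=v^tMv$ to the coordinate plane spanned by $e_1,e_2$, obtaining a binary form $ax^2+bxy+cy^2$; if $a=0$ take $(1,0)$, otherwise set $y=1$ and solve the quadratic $ax^2+bx+c=0$ over the algebraically closed field $k$) works verbatim in every characteristic and bypasses the symmetrization and congruence diagonalization entirely. Second, the hypothesis $M\in GL_n(k)$ is never used --- the statement holds for arbitrary $M$ once $n\geq 2$ --- which is harmless but worth being aware of: the invertibility is only relevant to how the lemma is applied in the appendix (to guarantee $D_{mv}\neq 0$ for some $v<m$ after arranging $D_{mm}=0$), not to the lemma itself.
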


Let us now study in detail the algebra $\mathcal{M}(A_q,A_q|C,D)$: it is the universal algebra with generators $x_{ij}$, $1\leq i \leq 2, 1\leq j\leq m$ and $d$, with relations $$ x^tA_qx=Cd \textbf{ (1) }, \  \ \ xDx^t=A_qd \textbf{ (2)}$$ 

We can write these relations explicitly: 

$\begin{aligned}
& x_{2i}x_{1j}=q^{-1}(x_{1i}x_{2j}-C_{ij}d) ,\ 1 \leq i \leq 2, 1\leq j\leq m, \ \textbf{(1')}\\
& \sum_{k,l=1}^mD_{kl}x_{1k}x_{2l}=d, &\textbf{(2')}\\
& \sum_{k,l=1}^mD_{kl}x_{1k}x_{1l}=0, &\textbf{(3')}\\
& \sum_{k,l=1}^mD_{kl}x_{2k}x_{2l}=0, &\textbf{(4')}\\
& \sum_{k,l=1}^mD_{kl}x_{2k}x_{1l}=qd, &\textbf{(5')}\\
\end{aligned}$

Using the fact that $\sum_{k,l=1}^mC_{kl}D_{kl}=1+q^2$, we see that relations $\textbf{(1')}$ and $\textbf{(2')}$ imply relation $\textbf{(5')}$. We will also need to get commutation relations between $d$ and the $x_{ij}$: note that relation $\textbf{(1)}$ and $\textbf{(2)}$ imply  $$\begin{aligned}
  &x^tdA_q^2=CDdx^t\\
  &xdDC=A_q^2dx
 \end{aligned}$$
which gives us 

$\\ \begin{aligned}
x_{1j}d&=-q\sum_{k=1}^m(C^{-1}D^{-1})_{kj}dx_{1k} \ 1\leq j \leq m\\
x_{2j}d&=-q^{-1}\sum_{k=1}^m(CD)_{jk}dx_{2k} \ 1\leq j \leq m
\end{aligned}$

Let us order the set $\{1, 2\} \times \{1,\dots, m\}$ lexicographically. Take $(u,v)$ the maximal element such that $D_{uv}\neq 0$. Since the matrix $D$ is invertible, we have $u=m$ and since $D_{mm}=0$, we have $v<m$. We see now that $\mathcal{M}(A_q,A_q|C,D)$ is the universal algebra with generators $x_{1j}, 1\leq j\leq m$, $x_{2j}, 1\leq j\leq m$ and $d$, with relations 

$\left\{\begin{aligned}
x_{2i}x_{1j}&=q^{-1}(x_{1i}x_{2j}-C_{ij}d) &\textbf{(1)}\\
x_{1m}x_{2v}&=(D_{mv})^{-1}\big(d-\sum_{(kl)<(mv)}D_{kl}x_{1k}x_{2l}\big) &\textbf{(2)}\\
x_{1m}x_{1v}&=-(D_{mv})^{-1}\big(\sum_{(kl)<(mv)}D_{kl}x_{1k}x_{1l}\big) &\textbf{(3)} \\
x_{2m}x_{2v}&=-(D_{mv})^{-1}\big(\sum_{(kl)<(mv)}D_{kl}x_{2k}x_{2l}\big) &\textbf{(4)}\\
x_{1j}d&=-q\sum_{k=1}^m(C^{-1}D^{-1})_{kj}dx_{1k} &\textbf{(5)}\\
x_{2j}d&=-q^{-1}\sum_{k=1}^m(CD)_{jk}dx_{2k} &\textbf{(6)}
\end{aligned} \right.$

We now have a nice presentation to use the diamond lemma (Bergman \cite{Ber}). We use the simplified exposition in the book Klimyk and Schm\"{u}dgen \cite{KS} and freely use the techniques and definitions involved. We endow the set $\{x_{ij}, (i,j) \in\{1,2\}\times\{1,\dots,m\} \}$ with the order induced by the lexicographic order on the set $\{1, 2\} \times \{1,\dots, m\}$, we put $d<x_{ij}$ and we order the set of monomials  according to their length, and finally two monomials of the same length are ordered lexicographically. It is clear that the presentation above is compatible with the order. Hence we have:

\begin{lemmaa}
There are no inclusions ambiguities, and we have exactly the following overlap ambiguities:
$$\begin{aligned}
  &(x_{2i}x_{1m},x_{1m}x_{1v}), &(x_{2i}x_{1m},x_{1m}x_{2v}), &\ \forall 1\leq i \leq m\\
  &(x_{1m}x_{2v},x_{2v}x_{1j}), &(x_{2m}x_{2v},x_{2v}x_{1j}), &\ \forall 1\leq j \leq m\\
  &(x_{2i}x_{1j},x_{1j}d) & &\forall 1\leq i,j \leq m\\
  &(x_{1m}x_{2v},x_{2v}d), &(x_{2m}x_{2v},x_{2v}d) &\\
  &(x_{1m}x_{1v},x_{1v}d) & &
 \end{aligned}$$
These ambiguities are resolvable.
\end{lemmaa}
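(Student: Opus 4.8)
The plan is to establish the three assertions of the lemma by a direct application of Bergman's diamond lemma \cite{Ber}, in the form used in \cite{KS}. I take the rewriting system $S$ to be the rules \textbf{(1)}--\textbf{(6)} of the displayed presentation of $\mathcal{M}(A_q,A_q|C,D)$, each read from its leading monomial to its right-hand side; the text has already checked that $S$ is compatible with the chosen degree-lexicographic order. The key structural observation is that \emph{every} leading monomial has length exactly two, namely $x_{2i}x_{1j}$, $x_{1m}x_{2v}$, $x_{1m}x_{1v}$, $x_{2m}x_{2v}$, $x_{1j}d$ and $x_{2j}d$, and these are pairwise distinct (no two rules share a leading word). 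Hence no leading word can be a proper subword of another, which rules out inclusion ambiguities at once.

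For the overlap ambiguities, since all leading words have length two, any overlap has length one: it is a single generator $g$ that is simultaneously the last letter of one leading word and the first letter of another, the ambiguous word being the resulting length-three product. No leading word begins with $d$, so $g$ is never $d$; thus $g$ is some $x_{1j}$ or some $x_{2l}$. I would then simply tabulate endings and beginnings. The leading words ending in an $x_1$-generator are those of types \textbf{(1)} and \textbf{(3)}; those ending in an $x_2$-generator are of types \textbf{(2)} and \textbf{(4)}, and necessarily end in $x_{2v}$ since $v<m$. The leading words beginning with an $x_1$-generator are of types \textbf{(2)}, \textbf{(3)}, \textbf{(5)}, and those beginning with an $x_2$-generator are of types \textbf{(1)}, \textbf{(4)}, \textbf{(6)}. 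Matching these, and remembering that rules \textbf{(2)}--\textbf{(4)} carry the fixed indices $m$ and $v$, yields exactly the nine families listed and no others; in particular no word ends in $x_{2m}$ (as $v<m$), which is why type \textbf{(4)} occurs only as the left member of an overlap.

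It remains to prove that each of the nine families is resolvable, and this is where the hypotheses $D^tC^tDC=q^2I_m$ and $\mathrm{tr}(CD^t)=\sum_{k,l}C_{kl}D_{kl}=1+q^2$ of Lemma \ref{L 1} are consumed. For a representative ambiguous word of each family I would compute its two one-step reductions and reduce each completely to an irreducible normal form, then check the two agree. The four purely quadratic families $(x_{2i}x_{1m},x_{1m}x_{2v})$, $(x_{2i}x_{1m},x_{1m}x_{1v})$, $(x_{1m}x_{2v},x_{2v}x_{1j})$ and $(x_{2m}x_{2v},x_{2v}x_{1j})$ give degree-three words in the $x_{ij}$; reducing them requires repeated use of rule \textbf{(1)} to reorder generators together with the ``solved'' isotropy relations \textbf{(2)}--\textbf{(4)}, and the two normal forms coincide precisely because of $D^tC^tDC=q^2I_m$. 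The four families terminating in $d$, namely $(x_{2i}x_{1j},x_{1j}d)$, $(x_{1m}x_{1v},x_{1v}d)$, $(x_{1m}x_{2v},x_{2v}d)$ and $(x_{2m}x_{2v},x_{2v}d)$, invoke in addition the commutation rules \textbf{(5)},\textbf{(6)}, whose very derivation rested on $A_q^2=-qI_2$ and the trace condition; closing them uses both stated hypotheses again. Once resolvability is established, confluence of $S$ follows and the normal words form a basis, so $\mathcal{M}(A_q,A_q|C,D)\neq 0$.

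The main obstacle is precisely this resolvability step, and within it the ``mixed'' overlaps in which a solved relation \textbf{(2)}--\textbf{(4)} meets rule \textbf{(1)} or a $d$-commutation \textbf{(5)},\textbf{(6)}. There a single reduction expands into a sum over the off-diagonal entries $D_{kl}$ with $(k,l)<(m,v)$, and after reordering with rule \textbf{(1)} this sum collapses onto the competing normal form only by means of the identities $D^tC^tDC=q^2I_m$ and $\mathrm{tr}(CD^t)=1+q^2$. The computation is long but entirely mechanical once these identities are in hand; the genuinely delicate point is to confirm that these two hypotheses are exactly what is needed for every overlap to close.
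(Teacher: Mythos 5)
Your treatment of the first two assertions is sound and in fact more explicit than the paper's, which simply states the list: the observation that every leading word has length two and that no two rules share a leading word rules out inclusion ambiguities, and your tabulation of endings against beginnings (using $v<m$ to exclude any overlap whose middle letter would have to be $x_{1m}$ coming from rule \textbf{(3)} or $x_{2m}$ coming from rules \textbf{(2)} and \textbf{(4)}) recovers exactly the stated list --- note that it comprises eight families, not nine as you write.

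The gap is in the third assertion, which is the entire substance of the lemma. You describe the verification scheme --- reduce each ambiguous word along its two one-step reductions to a normal form and compare --- but you do not carry out a single one of these computations, and you close by saying that the delicate point ``is to confirm that these two hypotheses are exactly what is needed for every overlap to close,'' which is precisely the statement to be proved. Resolvability is not a formal consequence of the setup: it is where the hypotheses $D^tC^tDC=q^2I_m$ and $\mathrm{tr}(CD^t)=1+q^2$ actually enter, through derived identities such as $(CD)_{ij}=q^2(C^{-1}D^{-1})_{ji}$, $\sum_{(kl)<(mv)}C_{kl}D_{kl}=1+q^2-C_{mv}D_{mv}$, and $\sum_{(kl)<(mv)}(C^{-1}D^{-1})_{jk}D_{kl}(CD)_{li}=D_{ji}-(C^{-1}D^{-1})_{jm}D_{mv}(CD)_{vi}$, and the paper devotes several pages to reducing each of the eight families explicitly; for instance, for $(x_{2i}x_{1m},x_{1m}x_{1v})$ both branches must be brought to the common form $-q^{-1}(D_{mv})^{-1}\bigl(q^{-1}\sum_{(kl)<(mv)}D_{kl}x_{1i}x_{1k}x_{2l}-q^{-2}(1-C_{mv}D_{mv})\sum_{k=1}^m(CD)_{ik}dx_{1k}+D_{mv}C_{im}dx_{1v}\bigr)$, and the cancellations producing the factor $1-C_{mv}D_{mv}$ on each side use the trace condition in an essential way. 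Without these computations your argument is a plan rather than a proof: had even one overlap failed to resolve, the diamond lemma would give nothing and the conclusion $\mathcal{M}(A_q,A_q|C,D)\neq 0$ would be unsupported.
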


\begin{proof}
Let us first note some identities:
\[ \begin{aligned}
&(CD)_{ij}=q^2(C^{-1}D^{-1})_{ji}\\
&\sum_{(kl)<(mv)} C_{kl}D_{kl}=1+q^2-C_{mv}D_{mv}\\
&\sum_{(kl)<(mv)} D_{kl}C_{ik}dx_{il}=\sum_{k=1}^m (CD)_{il}dx_{il}-D_{mv}C_{im}dx_{iv}, &\forall 1\leq i \leq m\\
&\sum_{(kl)<(mv)} (C^{-1}D^{-1})_{jk}D_{kl}(CD)_{li}=D_{ji}-(C^{-1}D^{-1})_{jm}D_{mv}(CD)_{vi}, &\forall 1\leq i,j \leq m\\
\end{aligned}\]

Let us show that the ambiguity $(x_{2i}x_{1m},x_{1m}x_{1v})$ is resolvable (the symbol ``$\to$'' means that we perform a reduction).

On the first hand we have:
\begin{flushleft}
$\begin{aligned}
&q^{-1}(x_{1i}x_{2m}x_{1v}-C_{im}dx_{1v})\\
\to &q^{-1}(q^{-1}(x_{1i}x_{1m}x_{2v}-C_{mv}x_{1i}d)-C_{im}dx_{1v})\\
\to &q^{-1}(q^{-1}((D_{mv})^{-1}\big(x_{1i}d-\sum_{(kl)<(mv)}D_{kl}x_{1i}x_{1k}x_{2l}\big)-C_{mv}x_{1i}d)-C_{im}dx_{1v})\\
= &-q^{-1}(D_{mv})^{-1}(q^{-1}(\big(-x_{1i}d+\sum_{(kl)<(mv)}D_{kl}x_{1i}x_{1k}x_{2l}\big)+D_{mv}C_{mu}x_{1i}d)+D_{mv}C_{im}dx_{1v})\\
= &-q^{-1}(D_{mv})^{-1}(q^{-1}\sum_{(kl)<(mv)}D_{kl}x_{1i}x_{1k}x_{2l}-q^{-1}(1-D_{mv}C_{mv})x_{1i}d)+D_{mv}C_{im}dx_{1v})\\
\to &-q^{-1}(D_{mv})^{-1}(q^{-1}\sum_{(kl)<(mv)}D_{kl}x_{1i}x_{1k}x_{2l}-q^{-2}(1-D_{mv}C_{mv})(\sum_{k=1}^m(CD)_{ik}dx_{1k}))+D_{mv}C_{im}dx_{1v})\\
\end{aligned}$
\end{flushleft}

On the other hand: 
\begin{flushleft}
$\begin{aligned}
&-(D_{mv})^{-1}\big(\sum_{(kl)<(mv)}D_{kl}x_{2i}x_{1k}x_{1l}\big)\\
\to &-q^{-1}(D_{mv})^{-1}\big(\sum_{(kl)<(mv)}D_{kl}(x_{1i}x_{2k}-C_{ik}d)x_{1l}\big)\\
= &-q^{-1}(D_{mv})^{-1}\big(\sum_{(kl)<(mv)}D_{kl}x_{1i}x_{2k}x_{1l}-\sum_{k=1}^m (CD)_{il}dx_{1l}+D_{mv}C_{im}dx_{1v}\big)\\
\to &-q^{-1}(D_{mv})^{-1}\big(q^{-1}\sum_{(kl)<(mv)}D_{kl}x_{1i}(x_{1k}x_{2l}-C_{kl}d)-\sum_{k=1}^m (CD)_{il}dx_{1l}+D_{mv}C_{im}dx_{1v}\big)\\
\end{aligned}$
$\begin{aligned}
= &-q^{-1}(D_{mv})^{-1}\big(q^{-1}\sum_{(kl)<(mv)}D_{kl}x_{1i}x_{1k}x_{2l}-q^{-1}\sum_{(kl)<(mv)}D_{kl}C_{kl}x_{1i}d)\\
&-\sum_{l=1}^m (CD)_{il}dx_{1l}+D_{mv}C_{im}dx_{1v}\big)\\
= &-q^{-1}(D_{mv})^{-1}\big(q^{-1}\sum_{(kl)<(mv)}D_{kl}x_{1i}x_{1k}x_{2l}-q^{-1}(1+q^2-C_{mv}D_{mv})x_{1i}d)\\ &-\sum_{l=1}^m (CD)_{il}dx_{1l}+D_{mv}C_{im}dx_{1v}\big)\\
\to &-q^{-1}(D_{mv})^{-1}\big(q^{-1}\sum_{(kl)<(mv)}D_{kl}x_{1i}x_{1k}x_{2l}-q^{-1}(1+q^2-C_{mv}D_{mv})(-q\sum_{k=1}^m(C^{-1}D^{-1})_{ki}dx_{1k})\\&-\sum_{l=1}^m (CD)_{il}dx_{1l}+D_{mv}C_{im}dx_{1v}\big)\\
= &-q^{-1}(D_{mv})^{-1}\big(q^{-1}\sum_{(kl)<(mu)}D_{kl}x_{1i}x_{1k}x_{2l} -q^{-2}(1+q^2-C_{mv}D_{mv})(\sum_{k=1}^m(CD)_{ik}dx_{1k})\\&-\sum_{k=1}^m (CD)_{il}dx_{1l}+D_{mv}C_{im}dx_{1v}\big)\\
= &-q^{-1}(D_{mv})^{-1}\big(q^{-1}\sum_{(kl)<(mv)}D_{kl}x_{1i}x_{1k}x_{2l}-q^{-2}(1-C_{mv}D_{mv})(\sum_{k=1}^m(CD)_{ik}dx_{1k})+D_{mv}C_{im}dx_{1v}\big)\\
  \end{aligned}$
\end{flushleft}

Similar computations show that the ambiguity $(x_{2m}x_{2v},x_{2v}x_{1j})$ is resolvable, using the relations $\textbf{(1)}$, $\textbf{(6)}$ and $\textbf{(2)}$.

Let us show that the ambiguity $(x_{1m}x_{2v},x_{2v}x_{1j})$ is resolvable.

On the first hand we have:
\begin{flushleft}
$\begin{aligned}
 &(D_{mv})^{-1}\big(dx_{1j}-\sum_{(kl)<(mv)}D_{kl}x_{1k}x_{2l}x_{1j}\big)\\
 \to &(D_{mv})^{-1}\big(dx_{1j}-q^{-1}\sum_{(kl)<(mv)}D_{kl}x_{1k}(x_{1l}x_{2j}-C_{lj}d)\big)\\
 = &(D_{mv})^{-1}\big(dx_{1j}-q^{-1}(\sum_{(kl)<(mv)}D_{kl}x_{1k}x_{1l}x_{2j}-\sum_{(kl)<(mv)}D_{kl}x_{1k}C_{lj}d)\big)\\
 = &(D_{mv})^{-1}\big(dx_{1j}-q^{-1}(\sum_{(kl)<(mv)}D_{kl}x_{1k}x_{1l}x_{2j}-\sum_{(kl)<(mv)}D_{kl}C_{lj}x_{1k}d)\big)\\
 = &(D_{mv})^{-1}\big(dx_{1j}-q^{-1}(\sum_{(kl)<(mv)}D_{kl}x_{1k}x_{1l}x_{2j}-\sum_{k=1}^m(DC)_{kj}x_{1k}d-D_{mv}C_{vj}x_{1m}d)\big)\\
 \to &(D_{mv})^{-1}\big(dx_{1j}-q^{-1}(\sum_{(kl)<(mv)}D_{kl}x_{1k}x_{1l}x_{2j}+q\sum_{k,l=1}^m(DC)_{kj}(C^{-1}D^{-1})_{lj}dx_{1l}-D_{mv}C_{vj}x_{1m}d)\big)\\
 = &(D_{mv})^{-1}\big(dx_{1j}-q^{-1}(\sum_{(kl)<(mv)}D_{kl}x_{1k}x_{1l}x_{2j}+qdx_{1j}-D_{mv}C_{vj}x_{1m}d)\big)\\
 = &-q^{-1}(D_{mv})^{-1}\big(\sum_{(kl)<(mv)}D_{kl}x_{1k}x_{1l}x_{2j}+D_{mv}C_{vj}x_{1m}d\big)
  \end{aligned}$
\end{flushleft}

On the other hand we have: 
\begin{flushleft}
$$ \begin{aligned}
   q^{-1}(x_{1m}x_{1v}x_{2j}-C_{vj}x_{1m}d)
   \to &-q^{-1}(D_{mv})^{-1}\big(\sum_{(kl)<(mv)}D_{kl}x_{1k}x_{1l}x_{2j}+D_{mv}C_{vj}x_{1m}d)\big)
  \end{aligned}$$
  \end{flushleft}

Similars computations shows that the ambiguity $(x_{2i}x_{1m},x_{1m}x_{2v})$ is resolvable, using the relations $\textbf{(4)}$ and $\textbf{(1)}$.

Let us show that the ambiguity $(x_{2i}x_{1j},x_{1j}d)$ is resolvable.

On the first hand, we get:
\begin{flushleft}
$\begin{aligned}
   &q^{-1}(x_{1i}x_{2j}d-C_{ij}d^2)\\
   \to &q^{-1}(-q^{-1}\sum_{k=1}^m(CD)_{jk}x_{1i}dx_{2k}-C_{ij}d^2)\\
   \to &q^{-1}(\sum_{k,l=1}^m(CD)_{jk}(C^{-1}D^{-1})_{li}dx_{1l}x_{2k}-C_{ij}d^2)
  \end{aligned}$
\end{flushleft}

and on the second hand: 
\begin{flushleft}
$\begin{aligned}
  &-q\sum_{k=1}^m(C^{-1}D^{-1})_{kj}x_{2i}dx_{1k} \\
  \to &\sum_{k,l=1}^m(C^{-1}D^{-1})_{kj}(CD)_{il}dx_{2l}x_{1k} \\
  \to &q^{-1}\sum_{k,l=1}^m(C^{-1}D^{-1})_{kj}(CD)_{il}d(x_{1l}x_{2k}-C_{lk}d)\\
  = &q^{-1}(\sum_{k,l=1}^m(C^{-1}D^{-1})_{kj}(CD)_{il}dx_{1l}x_{2k}-\sum_{k,l=1}^m(CD)_{il}C_{lk}(C^{-1}D^{-1})_{kj}d^2)\\ 
  = &q^{-1}(\sum_{k,l=1}^m(C^{-1}D^{-1})_{kj}(CD)_{il}dx_{1l}x_{2k}-C_{ij}d^2)\\
  = & q^{-1}(\sum_{k,l=1}^m(CD)_{jk}(C^{-1}D^{-1})_{li}dx_{1l}x_{2k}-C_{ij}d^2)   
  \end{aligned}$
\end{flushleft}

Let us show that the ambiguity $(x_{1m}x_{2v},x_{2v}d)$ is resolvable.

On the first hand we have:
\begin{flushleft}
$\begin{aligned}
   &(D_{mv})^{-1}\big(d^2-\sum_{(kl)<(mv)}D_{kl}x_{1k}x_{2l}d\big)\\
   \to &(D_{mv})^{-1}\big(d^2+q^{-1}\sum_{(kl)<(mv)}\sum_{j=1}^mD_{kl}(CD)_{lj}x_{1k}dx_{2j}\big)\\
   \to &(D_{mv})^{-1}\big(d^2-\sum_{(kl)<(mv)}\sum_{i,j=1}^m(C^{-1}D^{-1})_{ik}D_{kl}(CD)_{lj}dx_{1i}x_{2j}\big)\\
   = &(D_{mv})^{-1}\big(d^2-\sum_{i,j=1}^mD_{ij}dx_{1i}x_{2j}\big)+\sum_{i,j=1}^m(C^{-1}D^{-1})_{im}(CD)_{vj}dx_{1i}x_{2j}\\
   \to &\sum_{i,j=1}^m(C^{-1}D^{-1})_{im}(CD)_{vj}dx_{1i}x_{2j}
  \end{aligned}$
\end{flushleft}

because \[(D_{mv})^{-1}\big(d^2-\sum_{i,j=1}^mD_{ij}dx_{1i}x_{2j}\big)=(D_{mv})^{-1}\big(d^2-\sum_{(ij)<(mv)}D_{ij}dx_{1i}x_{2j}+D_{mv}dx_{1m}x_{2v}\big)\to 0\]
and on the second one, we have: 
\begin{flushleft}
$\begin{aligned}
   &-q^{-1}\sum_{j=1}^m(CD)_{vj}x_{1m}dx_{2j}\\
   \to &\sum_{i,j=1}^m(CD)_{vj}(C^{-1}D^{-1})_{im}dx_{1i}x_{2j}\\
  \end{aligned}$
\end{flushleft}

Let us show that the ambiguity $(x_{2m}x_{2v},x_{2v}d)$ is resolvable.

On the first hand, we have:
\begin{flushleft}
$\begin{aligned}
   &-(D_{mv})^{-1}\big(\sum_{(kl)<(mv)}D_{kl}x_{2k}x_{2l}d\big)\\
   \to &q^{-2}(D_{mv})^{-1}\big(\sum_{(kl)<(mv)}\sum_{i,j=1}^m(CD)_{ki}D_{kl}(CD)_{lj}dx_{2i}x_{2j}\big)\\
   = &(D_{mv})^{-1}\big(\sum_{(kl)<(mv)}\sum_{i,j=1}^m(C^{-1}D^{-1})_{ik}D_{kl}(CD)_{lj}dx_{2i}x_{2j}\big)\\
   = &(D_{mv})^{-1}\big(\sum_{i,j=1}^mD_{ij}dx_{2i}x_{2j}\big)-\sum_{i,j=1}^m(C^{-1}D^{-1})_{im}(CD)_{vj}dx_{2i}x_{2j}\\
   \to &-\sum_{i,j=1}^m(C^{-1}D^{-1})_{im}(CD)_{vj}dx_{2i}x_{2j}
  \end{aligned}$
\end{flushleft}

  on the second hand: 
\begin{flushleft}
$\begin{aligned}
   &-q^{-1}\sum_{j=1}^m(CD)_{vj}x_{2m}dx_{2j}\\
   \to &-q^{-2}\sum_{i,j=1}^m(CD)_{vj}(CD)_{mi}dx_{2i}x_{2m}\\
   = &-\sum_{i,j=1}^m(C^{-1}D^{-1})_{im}(CD)_{vj}dx_{2i}x_{2j}
  \end{aligned}$
\end{flushleft}

Let us show that the ambiguity $(x_{1m}x_{1v},x_{1v}d)$ is resolvable.

On the first hand, we get:
\begin{flushleft}
$\begin{aligned}
   &-(D_{mv})^{-1}\big(\sum_{(kl)<(mv)}D_{kl}x_{1k}x_{1l}d\big)\\
   \to &q(D_{mv})^{-1}\big(\sum_{(kl)<(mv)}D_{kl}(\sum_{j=1}^m(C^{-1}D^{-1})_{jl}x_{1k}dx_{1j})\big)\\
   \to &-q^2(D_{mv})^{-1}\big(\sum_{(kl)<(mv)}D_{kl}(\sum_{j=1}^m(C^{-1}D^{-1})_{jl}(\sum_{i=1}^m(C^{-1}D^{-1})_{ki}dx_{1i}x_{1j}))\big)\\
   = &-q^2(D_{mv})^{-1}\big(\sum_{(kl)<(mv)}\sum_{i,j=1}^m(C^{-1}D^{-1})_{ik}D_{kl}(C^{-1}D^{-1})_{jl}dx_{1i}x_{1j}\big)\\
   \end{aligned}$
$\begin{aligned}
   = &-(D_{mv})^{-1}\big(\sum_{(kl)<(mv)}\sum_{i,j=1}^m(C^{-1}D^{-1})_{ik}D_{kl}(CD)_{lj}dx_{1i}x_{1j}\big)\\
   = &-(D_{mv})^{-1}\sum_{i,j=1}^m D_{ij}dx_{1i}x_{1j}+\sum_{i,j=1}^m(C^{-1}D^{-1})_{im}(CD)_{vj}dx_{1i}x_{1j}\\
   \to &q^2\sum_{i,j=1}^m(C^{-1}D^{-1})_{im}(C^{-1}D^{-1})_{jv}dx_{1i}x_{1j}\\
  \end{aligned}$
\end{flushleft}

because $$\sum_{i,j=1}^m D_{ij}dx_{1i}x_{1j}= \sum_{(ij)<(mv)} D_{ij}dx_{1i}x_{1j}+D_{mv}dx_{1m}x_{1v}$$

on the second hand: 
\begin{flushleft}
$\begin{aligned}
   &-q\sum_{j=1}^m(C^{-1}D^{-1})_{jv}x_{1m}dx_{1j}\\
   \to &q^2\sum_{i,j=1}^m(C^{-1}D^{-1})_{jv}(C^{-1}D^{-1})_{im}dx_{1i}dx_{1j}\\
   = &q^2\sum_{i,j=1}^m(C^{-1}D^{-1})_{im}(C^{-1}D^{-1})_{jv}dx_{1i}dx_{1j}
  \end{aligned}$
\end{flushleft}

\end{proof}

Using this result, we can apply the diamond lemma and state:
\begin{cora}
The set of reduced monomials is a basis of $\mathcal{M}(A_q,A_q|C,D)$. In particular, the elements $x_{ij}$ are linearly independent and the algebra $\mathcal{M}(A_q,A_q|C,D)$ is non zero.
\end{cora}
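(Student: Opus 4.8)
The plan is to invoke Bergman's Diamond Lemma \cite{Ber} directly, since the preceding lemma has already carried out the essential work of checking that all ambiguities are resolvable. First I would make the reduction system precise: the six families of relations $\textbf{(1)}$--$\textbf{(6)}$ are each written with a distinguished leading monomial on the left-hand side (namely $x_{2i}x_{1j}$, $x_{1m}x_{2v}$, $x_{1m}x_{1v}$, $x_{2m}x_{2v}$, $x_{1j}d$ and $x_{2j}d$), and the corresponding right-hand sides are taken as the associated reductions. The monomials are ordered first by length and then, within a fixed length, lexicographically with respect to the order on generators in which $d$ is smallest and the $x_{ij}$ follow the lexicographic order on $\{1,2\}\times\{1,\dots,m\}$, exactly as set up before the lemma.

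Next I would verify that this order makes the reduction system compatible and satisfies the descending chain condition, the two standing hypotheses needed to run the Diamond Lemma. The order is a semigroup partial order, since multiplying on both sides by fixed words preserves length comparisons and, at equal length, lexicographic comparisons. Compatibility amounts to checking that in each relation the right-hand side is a combination of monomials strictly smaller than the leading one: for $\textbf{(1)}$--$\textbf{(4)}$ the reductions produce either the shorter element $d$ or quadratic monomials that are lexicographically smaller (for instance $x_{1i}x_{2j}<x_{2i}x_{1j}$, and $x_{1k}x_{2l}<x_{1m}x_{2v}$ when $(k,l)<(m,v)$), while for $\textbf{(5)}$--$\textbf{(6)}$ the length is preserved but each reduced monomial carries $d$ in the first slot, hence is lexicographically below $x_{ij}d$. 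The descending chain condition holds because the length is bounded below and, for each fixed length, there are only finitely many monomials.

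With these hypotheses in place, the genuine content is furnished by the previous lemma: there are no inclusion ambiguities, and the explicitly listed overlap ambiguities are all resolvable. By the equivalence of conditions in the Diamond Lemma, it follows that the set of reduced (that is, $S$-irreducible) monomials is a $k$-basis of $\mathcal{M}(A_q,A_q|C,D)$. I do not expect any real obstacle here beyond the bookkeeping just described, precisely because the genuinely hard step---confirming that every overlap ambiguity resolves through the relations $\textbf{(1)}$--$\textbf{(6)}$---has already been completed.

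Finally, I would read off the two consequences. Each generator $x_{ij}$ is a monomial of length one, so it cannot contain any leading word (all of which have length two) as a subword; hence the $x_{ij}$ are reduced monomials, and being distinct members of a basis they are linearly independent. In particular the basis is non-empty, as it already contains $1$ and all the $x_{ij}$, so the algebra $\mathcal{M}(A_q,A_q|C,D)$ is non-zero, which is exactly the assertion needed to complete the proof of Lemma \ref{L 1}.
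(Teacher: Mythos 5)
Your proposal is correct and follows essentially the same route as the paper: the paper's own proof consists of the pre-lemma setup of the degree-then-lexicographic order (with the compatibility of the presentation asserted there) together with the resolvability of all ambiguities established in the preceding lemma, after which the Diamond Lemma is invoked exactly as you do. Your additional verifications (semigroup partial order, descending chain condition, compatibility of each relation, and the observation that the length-one generators are automatically reduced) are just the bookkeeping the paper leaves implicit, and they all check out.
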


In order to complete the proof of Lemma \ref{L 1}, we would like to add an inverse to $d$, and a good way to do this would be to localize $\mathcal{M}(A_q,A_q|C,D)$ by the multiplicative set $S=\{d^n, n\in \Bbb N\}$. By the presentation, we already have $\mathcal{M}(A_q,A_q|C,D)S=S\mathcal{M}(A_q,A_q|C,D)$, and we need to know that $d$ is not a zero divisor (see \cite{Dix}).

\begin{lemmaa}
$d\in \mathcal{M}(A_q,A_q|C,D)$ is not a zero divisor.
\end{lemmaa}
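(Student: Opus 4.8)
The plan is to read off the shape of the reduced monomials from the previous corollary, use them to show that \emph{left} multiplication by $d$ is injective, and then transfer this to \emph{right} multiplication by means of an automorphism $\sigma$ built from relations $\textbf{(5)}$ and $\textbf{(6)}$.

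First I would pin down the reduced monomials. Since the leading terms of $\textbf{(5)}$ and $\textbf{(6)}$ are $x_{1j}d$ and $x_{2j}d$ for \emph{all} $j$, no reduced word may contain any $x_{ij}$ immediately followed by $d$; hence every reduced monomial has the form $d^{r}w$, where $r\geq 0$ and $w$ is a reduced word in the $x_{ij}$ alone (i.e. avoiding the leading terms $x_{2i}x_{1j},\ x_{1m}x_{2v},\ x_{1m}x_{1v},\ x_{2m}x_{2v}$ of $\textbf{(1)}$--$\textbf{(4)}$). By the corollary these $d^{r}w$ form a basis of $\mathcal{M}(A_q,A_q|C,D)$. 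Left multiplication by $d$ sends the basis element $d^{r}w$ to $d^{r+1}w$, again a basis element, and distinct inputs give distinct outputs; so $L_d$ maps the basis injectively into itself and is therefore injective. This already shows $da=0\Rightarrow a=0$, i.e. $d$ is not a left zero divisor.

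To handle right multiplication I would introduce the map $\sigma$ determined on generators by $\sigma(d)=d$, $\sigma(x_{1j})=-q\sum_{k}(C^{-1}D^{-1})_{kj}x_{1k}$ and $\sigma(x_{2j})=-q^{-1}\sum_{k}(CD)_{jk}x_{2k}$, so that $\textbf{(5)}$ and $\textbf{(6)}$ read exactly $x_{ij}d=d\,\sigma(x_{ij})$. The key point is that $\sigma$ extends to an algebra endomorphism of $\mathcal{M}(A_q,A_q|C,D)$. Compatibility with $\textbf{(5)}$ and $\textbf{(6)}$ is immediate since the matrices involved are all functions of $C^{-1}D^{-1}$ and commute. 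Compatibility with $\textbf{(1)}$--$\textbf{(4)}$ follows from the already-established left-regularity of $d$: computing the product $x_{2i}x_{1j}d$ in the two orders (using $\textbf{(1)}$ first versus using $\textbf{(5)}$/$\textbf{(6)}$ first) yields $d\,\sigma(x_{2i})\sigma(x_{1j})=d\bigl(q^{-1}\sigma(x_{1i})\sigma(x_{2j})-q^{-1}C_{ij}d\bigr)$, and cancelling $d$ on the left gives precisely $\hat\sigma(\textbf{(1)})=0$; the analogous cancellations for the overlaps $(x_{1m}x_{2v},x_{2v}d)$, $(x_{2m}x_{2v},x_{2v}d)$ and $(x_{1m}x_{1v},x_{1v}d)$ are exactly the resolutions computed in the previous lemma. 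Because $C$ and $D$ are invertible, $\sigma$ has an inverse defined on generators by the inverse matrices, so $\sigma$ is an automorphism.

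Finally, from $x_{ij}d=d\,\sigma(x_{ij})$ and $\sigma(d)=d$ one gets $ad=d\,\sigma(a)$ for every $a$ by multiplicativity. Hence if $ad=0$ then $d\,\sigma(a)=0$, the injectivity of $L_d$ forces $\sigma(a)=0$, and bijectivity of $\sigma$ gives $a=0$; so $d$ is not a right zero divisor either. The only genuinely delicate point is the well-definedness of $\sigma$ on $\textbf{(1)}$--$\textbf{(4)}$, but this requires no new computation: it is the content of the $d$-overlap resolutions already verified above, now read off cleanly using left-regularity of $d$. Together the two halves show that $d$ is not a zero divisor.
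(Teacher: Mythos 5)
Your first half is exactly the paper's own proof: the paper likewise observes that every reduced monomial has the form $d^{r}w$ with $w$ a reduced word in the $x_{ij}$, that $dM$ is again reduced whenever $M$ is, and concludes from the basis of reduced monomials that $dx=0$ forces $x=0$. In fact the paper stops there and never discusses right multiplication, so everything you do with $\sigma$ is additional material. Most of that addition is sound: the identity $ad=d\,\sigma(a)$, and in particular the trick of establishing $\hat\sigma(\textbf{(1)})=\dots=\hat\sigma(\textbf{(4)})=0$ in $\mathcal{M}(A_q,A_q|C,D)$ by right-multiplying each relation by $d$, rewriting via $\textbf{(5)}$--$\textbf{(6)}$, and cancelling $d$ on the left using the already-proved left-regularity, is correct and nicely avoids redoing the overlap computations.

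The genuine gap is the sentence ``$\sigma$ has an inverse defined on generators by the inverse matrices, so $\sigma$ is an automorphism.'' Prescribing the candidate inverse $\tau$ on generators does not make it a well-defined endomorphism of the quotient: you must check that its lift $\hat\tau$ to the free algebra maps the defining ideal $I$ into itself, and the argument symmetric to the one you used for $\sigma$ --- namely $dR=\hat\tau(R)d$ in $\mathcal{M}(A_q,A_q|C,D)$ for each relation $R$, followed by cancelling $d$ on the \emph{right} --- presupposes exactly the right-regularity of $d$ you are trying to prove, so as written the step is circular. The gap is repairable without new computations: give each $x_{ij}$ degree $1$ and $d$ degree $2$; all six relations are then homogeneous, so $\mathcal{M}(A_q,A_q|C,D)$ is an $\mathbb{N}$-graded algebra with finite-dimensional homogeneous components, and $\sigma$ is a degree-preserving endomorphism whose image is a subalgebra containing $d$ and (by invertibility of $C$ and $D$) all the $x_{ij}$, hence is surjective, hence surjective on each homogeneous component, hence bijective on each finite-dimensional component, hence an automorphism. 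With that patch your argument is complete and actually proves more than the printed proof does.
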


\begin{proof}
According the above lemma, the set of reduced monomials (denoted by $\Phi$) form a basis of $\mathcal{M}(A_q,A_q|C,D)$. A glance at the presentation show us that a reduced monomial is of the form $$d^ix, \ i\in \Bbb N, \ x \ \text{a ``good'' product of} \ x_{ij},$$ and the important thing to note is that if $M$ is a reduced monomial, so is $dM$. Finally, let $x=\sum_{M \in \Phi} \alpha_M M$ be an element of $\mathcal{M}(A_q,A_q|C,D)$ such that $dx=0$. Then $$dx=\sum_{M \in \Phi} \alpha_M dM=0 \Rightarrow \alpha_M=0, \ \forall \ \text{M reduced monomial},$$
then $x=0$.
\end{proof}

\begin{cora}
$\mathcal{G}(A_q,A_q|C,D)=\mathcal{M}(A_q,A_q|C,D)/S$ is non zero.
\end{cora}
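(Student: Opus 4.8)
The plan is to recognise $\mathcal{G}(A_q,A_q|C,D)$ as the Ore localization of $\mathcal{M}:=\mathcal{M}(A_q,A_q|C,D)$ at the multiplicative set $S=\{d^n : n\in\mathbb{N}\}$, and to show that this localization does not collapse to zero. All the substantial work has already been done in the preceding results: by the previous Corollary the reduced monomials form a basis of $\mathcal{M}$, so in particular $\mathcal{M}\neq 0$; the previous Lemma shows that $d$ is not a zero divisor in $\mathcal{M}$; and the relations $\textbf{(5)}$ and $\textbf{(6)}$ exhibit $d$ as a normalizing element.

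First I would make the normality of $d$ precise. Relation $\textbf{(5)}$ gives $x_{1j}d\in d\mathcal{M}$, and since the matrix $C^{-1}D^{-1}$ is invertible the same relation, read as an invertible linear system, also yields $dx_{1k}\in\mathcal{M}d$; relation $\textbf{(6)}$ does the same for the $x_{2j}$, and both inclusions propagate multiplicatively to arbitrary words. Hence $d\mathcal{M}=\mathcal{M}d$, so $S$ is a two-sided Ore set, the localization $\mathcal{M}[S^{-1}]$ exists, and the relation $\mathcal{M}S=S\mathcal{M}$ noted above is exactly this normality. The universal property of $\mathcal{G}(A_q,A_q|C,D)$ as the algebra obtained by adjoining a two-sided inverse $d^{-1}$ then matches the universal property of $\mathcal{M}[S^{-1}]$, giving the identification $\mathcal{G}(A_q,A_q|C,D)=\mathcal{M}[S^{-1}]$.

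The key step is to invoke regularity. Since $d$ is not a zero divisor, each $d^n$ is a product of non-zero-divisors, hence itself a non-zero-divisor; in particular $d^n\neq 0$, so $0\notin S$. By the general theory of Ore localization (see \cite{Dix}), the kernel of the canonical map $\mathcal{M}\to\mathcal{M}[S^{-1}]$ is $\{r\in\mathcal{M} : rs=0 \text{ for some } s\in S\}$, which is $\{0\}$ because every element of $S$ is regular. Thus the localization map is injective, and as $\mathcal{M}\neq 0$ we conclude $\mathcal{G}(A_q,A_q|C,D)=\mathcal{M}[S^{-1}]\neq 0$.

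I expect essentially no genuine obstacle at this final stage: the real difficulty — producing an explicit linear basis through the diamond lemma and proving that $d$ is regular — was already overcome in the preceding corollary and lemma. The only point that needs a little care is the clean verification of the Ore condition from the commutation relations, so that the localization is honest and its universal property coincides with the one defining $\mathcal{G}(A_q,A_q|C,D)$.
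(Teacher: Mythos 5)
Your proposal is correct and follows exactly the route the paper intends: the remark preceding the corollary (that $\mathcal{M}S=S\mathcal{M}$ follows from relations $\textbf{(5)}$ and $\textbf{(6)}$, together with the lemma that $d$ is not a zero divisor and the reference to \cite{Dix}) is precisely the Ore localization argument you spell out, with the injectivity of $\mathcal{M}\to\mathcal{M}[S^{-1}]$ coming from the regularity of the powers of $d$. You have merely made explicit the details the paper leaves to the reader.
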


\bibliographystyle{alpha}
\bibliography{bibliographie}

  \scshape 

\vglue0.3cm
\hglue0.02\linewidth\begin{minipage}{0.9\linewidth}
Colin Mrozinski\\
{Laboratoire de~Math\'ematiques (UMR 6620)}\\
Universit\'e Blaise Pascal\\
Complexe universitaire des C\'ezeaux\\
63177 Aubi\`ere Cedex, France \\
E-mail : \parbox[t]{0.45\linewidth}{\texttt{colin.mrozinski@math.univ-bpclermont.fr}} 
\end{minipage}

\end{document}